\newtheorem{thm}{Theorem}[section]
\newtheorem{lem}[thm]{Lemma}
\newtheorem{prp}[thm]{Proposition}
\newtheorem{cor}[thm]{Corollary}
\theoremstyle{definition}
\newtheorem{remark}[thm]{Remark}
\theoremstyle{plain}
\newcommand{\rem}[1]{}
\newcommand{\C}{\mathbb{C}}
\newcommand{\N}{\mathbb{N}}
\newcommand{\HH}{{\mathrm{H}}}
\newcommand{\frakm}{{\mathfrak{m}}}
\newcommand{\calO}{{\mathcal{O}}}
\newcommand{\bbP}{{\mathbb{P}}}
\newcommand{\veps}{\varepsilon}
\newcommand{\vphi}{\varphi}
\newcommand{\suchthat}{\,:\,}
\newcommand{\where}{\,|\,}
\newcommand{\quo}[1]{\overline{#1}}
\newcommand{\Trings}[1]{\left< #1 \right>}
\newcommand{\floor}[1]{\lfloor {#1} \rfloor}
\newcommand{\Floor}[1]{\left\lfloor {#1} \right\rfloor}
\newcommand{\Ceil}[1]{\left\lceil {#1} \right\rceil}
\DeclareMathOperator{\Aut}{Aut} %
\DeclareMathOperator{\Cent}{Cent} %
\DeclareMathOperator{\Char}{char} %
\DeclareMathOperator{\End}{End} %
\DeclareMathOperator{\GL}{GL} %
\DeclareMathOperator{\id}{id} %
\DeclareMathOperator{\Max}{Max}
\DeclareMathOperator{\Nrd}{Nrd} %
\newcommand{\op}{\mathrm{op}} %
\DeclareMathOperator{\rank}{rank}
\DeclareMathOperator{\Spec}{Spec} %
\DeclareMathOperator{\Stab}{Stab} %
\DeclareMathOperator{\Tr}{Tr} %
\DeclareMathOperator{\Trd}{Trd} %
\newcommand{\nGL}[2]{\mathrm{GL}_{#2}({#1})}
\newcommand{\nMat}[2]{\mathrm{M}_{#2}(#1)}
\newcommand{\trans}{{\mathrm{t}}}
\numberwithin{equation}{section} 
\newcommand{\M}[1][n]{\mathrm{M}_{#1}}
\newcommand{\PGL}{\mathrm{PGL}_n}
\newcommand{\Ob}{\mathrm{O}^\mathrm{b}_n}
\newcommand{\POb}{\mathrm{PO}^\mathrm{b}_n}
\newcommand{\Spn}{\mathrm{Sp}_n}
\newcommand{\PSp}{\mathrm{PSp}_n}
\newcommand{\PS}{\mathrm{P}S}
\DeclareMathOperator{\gen}{gen}
\title[Spaces of Generators]{Spaces of Generators for Azumaya Algebras with Unitary Involution}
\author{Omer Cantor$^*$}
\email{ocantor@proton.me}
\author{Uriya A.\ First$^*$}
\address{$^*$University of Haifa}
\email{uriya.first@gmail.com}
\begin{document}

\begin{abstract}
Let $A$ be a finite dimensional algebra  (possibly
with some extra structure) over an infinite field
$K$ and let $r\in\N$.
The $r$-tuples $(a_1,\dots,a_r)\in A^r$
which fail to generate $A$ are the $K$-points of a closed 
subvariety $Z_r$ of the affine space underlying $A^r$, the codimension of which may be thought of
as quantifying how well a generic $r$-tuple in $A^r$ generates $A$. 
Taking this intuition one step further,
the second author, Reichstein and Williams  showed   that lower bounds
on the codimension of $Z_r$ in $A^r$ (for every $r$) imply
upper bounds on the number of generators of \emph{forms} of the $K$-algebra $A$ over
finitely generated $K$-rings. That work also demonstrates how finer
information on $Z_r$ may be used to construct forms of $A$ which require many elements
to generate.

The dimension and irreducible components of $Z_r$ are known
in a few cases, which in particular lead to upper bounds on the number
of generators of Azumaya algebras and Azumaya algebras with involution of the first
kind (orthogonal or symplectic). This paper treats 
the case of Azumaya algebras with a  unitary
involution by finding the dimension and irreducible components of $Z_r$
when $A$ is the $K$-algebra with involution $(\nMat{K}{n}\times \nMat{K}{n},
(a,b)\mapsto (b^\trans,a^\trans))$. 
Our analysis implies
that
every Azumaya algebra with a unitary involution over a finitely generated $K$-ring
of Krull dimension $d$
can be generated by $\floor{\frac{d}{2n-2}+\frac{3}{2}}$ elements.
We also give examples which require at least half that many elements
to generate, by building on
the work of the second author, Reichstein and Williams.

Our method of finding the dimension and irreducible components
of $Z_r$ actually applies to all $K$-algebras $A$ satisfying a mild assumption.

\end{abstract}

\maketitle

\section{Introduction}


Let $K$ be an infinite field and let $A$ be a finite-dimensional $K$-algebra, 
possibly
with additional structure such as an involution. 
Let $R$ be a $K$-ring,
i.e., a commutative   $K$-algebra.
By an \emph{$R$-form} of $A$, we mean an $R$-algebra $B$
for which there exists  
a faithfully flat $R$-ring $S$ such that $A_S:=A\otimes_KS$ is isomorphic
to $B\otimes_R S$ as $S$-algebras (with additional structure, if any).

For this introduction, we will focus on the following   algebras and algebras-with-involution over $K$:
\begin{enumerate}[label=(\roman*)]
\item $A=\nMat{K}{n}$,
\item $A=(\nMat{K}{n},\trans)$, where $\trans$ is matrix transposition,
\item $A=(\nMat{K}{n},
[\begin{smallmatrix} a & b \\ c & d \end{smallmatrix}]
\mapsto 
[\begin{smallmatrix} d^\trans & -b^\trans \\ -c^\trans & a^\trans \end{smallmatrix}]
$), where $n$ is even
and $a,b,c,d\in\nMat{K}{n/2}$,
\item $A=(\nMat{K}{n}\times \nMat{K}{n},(a,b)\mapsto (b^\trans,a^\trans))$.
\end{enumerate}
For a $K$-field $F$, it is well-known that in case (i), the $F$-forms of $A$
are the  central simple $F$-algebras of degree $n$,
and in cases (ii), (iii) and (iv),
the $F$-forms are the 
degree-$n$ central simple $F$-algebras with involution 
of orthogonal, symplectic or unitary type, respectively; see \cite{Knus_1998_book_of_involutions}, for instance. 
This  fact is fundamental to the study and classification of simple algebraic groups of classical
type.
Over a $K$-ring $R$, the $R$-forms of $A$ are the degree-$n$ Azumaya $R$-algebras 
in case (i), and the degree-$n$ Azumaya $R$-algebras carrying an involution of orthogonal, symplectic or unitary
type in cases (ii), (iii) or (iv), respectively; see \cite[III.8.5]{Knus_1991_quadratic_hermitian_forms},
\cite[\S1.4]{First_2022_octagon} 
and Proposition~\ref{PR:forms-of-A} below.

For every $r\in \N$, let $V_r$
denote the $K$-affine space 
underlying $A^r$. In \cite[\S2]{First_2022_generators_of_alg_over_comm_ring},
it was observed that $V_r$ has a closed subscheme $Z_r$
of ``$r$-tuples not generating $A$'', that is, 
for every $K$-field $F$, the $F$-points of $Z_r$
are the   $r$-tuples   $(a_1,\dots,a_r)\in A^r_F$ which do not generate
$A_F$ as an $F$-algebra (with extra structure, if any).


On its face,
the codimension of $Z_r$ in $V_r$ quantifies
how likely it is for   an $r$-tuple
in $A^r$ to generate $A$. 
However, in \cite{First_2022_generators_of_alg_over_comm_ring},
the second author, Z.\ Reichstein and B.\ Williams observed that
$Z_r$ is strongly related to the number of generators of $R$-forms of $A$.
This manifests in two ways: First, a lower
bound on the codimension of $Z_r$ in $V_r$ (for every $r$)
implies  an upper bound on the number of generators of any $R$-form of $A$ depending
only on the Krull dimension of $R$ \cite[Thm.~6.1]{First_2022_generators_of_alg_over_comm_ring}.
Second, finer information on how $Z_r$
sits inside $V_r$ may be used in an ad hoc manner to produce forms of $A$ which
require many generators; see 
\cite[Thm.~1.5(b)]{First_2022_generators_of_alg_over_comm_ring}, \cite{Gant_2024_space_gens_2by2_matrix},
\cite{Shukla_2021_cassifying_spaces_for_etale_algebras}.

At the basis of these results lies a deeper observation, which also serves
as a motivation to  study $Z_r$.
Let $G$ be the $K$-algebraic group of automorphisms of $A$
and let $U_r=V_r-Z_r$.
The evident action of $G$ on $V_r$
restricts to a free action   on $U_r $, and the $G$-torsor $U_r\to Y_r:=U_r/G$
corresponds to a $Y_r$-form of $A$ 
(consisting of a sheaf of $\calO_{Y_r}$-algebras).
It turns out that this $Y_r$-form 
carries a canonical generating $r$-tuple (of global sections)
and it is universal among the forms of $A$ equipped with a generating $r$-tuple 
\cite[Prop.~4.1]{First_2022_generators_of_alg_over_comm_ring}.
Since the classifying stack of $G$, denoted $BG$, classifies
forms of $A$, there is a canonical morphism $Y_r\to BG$ corresponding to forgetting the generating
$r$-tuple.
Finding the number of generators of an $R$-form $B$ may therefore be restated as the problem
of determining whether the morphism $\Spec R\to BG$ corresponding to $B$ may be lifted
to a morphism $\Spec R\to Y_r$ (corresponding to specifying $r$ generators for $B$).
Less formally, we wish to know ``how well'' does   the $K$-algebraic space $Y_r=(V_r-Z_r)/G$
approximates $B G$.
To see how this relates to $Z_r$, note for example  that if the codimension of $Z_r$ in $V_r$ is $m$,
then $Y_r\to BG$   induces an isomorphism   $\HH^{i}(BG)\to \HH^{i}(Y_r)$ when $i<2m-1$
for  some ``nice'' cohomology theories, e.g., singular cohomology when $K=\C$
\cite[Lem.~11.2, Rmk.~1.7]{First_2022_generators_of_alg_over_comm_ring}.
Thus, if  we can guarantee   that $m=\dim V_r-\dim Z_r$ is large, we may interchange $BG$
and $Y_r$ in some cohomological computations. 
On the other hand, information about when    $\HH^i(BG)\to \HH^i(Y_r)$ is  not an isomorphism
may be used to obstruct lifts along $Y_r\to BG$ \cite[Thm.~11.4]{First_2022_generators_of_alg_over_comm_ring}, giving rise to forms of $A$ which
cannot be generated by $r$ elements. Beyond what is guaranteed by dimension considerations,
such information is usually obtained by
careful case-specific analysis of the pair $Z_r\subseteq V_r$.

\medskip

So far, the dimension and irreducible components of $Z_r$
were determined 
in cases  (i)--(iii) and for a few more $K$-algebras.
Specifically, the cases where $A=\nMat{K}{n}$
and $A$ is a split octonion $K$-algebra 
were treated in \cite{First_2022_generators_of_alg_over_comm_ring},
and cases (ii) and (iii) above were resolved in \cite{Nam_2023_generators_for_matrix_algebra_with_inv}.
This  led in particular  to upper bounds
on the number of generators of Azumaya algebras  
and Azumaya algebras carrying an orthogonal or symplectic involution
listed in \cite[Thm.~1.5(a), Prop.~7.3]{First_2022_generators_of_alg_over_comm_ring}.
In addition, a more careful study of $Z_r$ and $U_r=V_r-Z_r$
was done for  $A=K\times\dots\times K$ ($n$ times) --- in which case
the forms of $A$ are the rank-$n$ finite \'etale algebras ---
and  $A=\nMat{K}{n}$, leading 
to examples of  finite \'etale algebras and
degree-$n$ Azumaya algebras which require many elements to generate;
see \cite{Shukla_2021_cassifying_spaces_for_etale_algebras} for
$A=K\times\dots\times K$,
\cite[Thm.~1.5(b)]{First_2022_generators_of_alg_over_comm_ring}
for $A=\nMat{K}{n}$ with $n>2$
and \cite{Gant_2024_space_gens_2by2_matrix} for $A=\nMat{K}{2}$.

In this paper, we add to this picture by finding the dimension 
and irreducible components of $Z_r$ in the ``unitary'' case (iv). We show:

\begin{thm}\label{TH:codim-unitary}
Suppose $A=(\nMat{K}{n}\times \nMat{K}{n},(a,b)\mapsto (b^\trans,a^\trans))$ and let $r\in\N$.
Then
\[\dim Z_r = 
\left\{\begin{array}{ll}
2rn^2 - (2r-1)(n-1) & n>1 \\
r & n=1.
\end{array}\right.
\]
\end{thm}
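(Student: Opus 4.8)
The plan is to reduce the statement, for $n>1$, to a dimension count after first classifying the $\sigma$-closed unital subalgebras of $\nMat{K}{n}\times\nMat{K}{n}$, where $\sigma(a,b)=(b^\trans,a^\trans)$. Writing an $r$-tuple as $((a_1,b_1),\dots,(a_r,b_r))$ with $a_i,b_i\in\nMat{K}{n}$, the $\sigma$-closed subalgebra it generates coincides with the ordinary subalgebra $C$ generated by the $2r$ elements $(a_i,b_i)$ and $\sigma(a_i,b_i)=(b_i^\trans,a_i^\trans)$. First I would record that the two projections satisfy $\pi_2(C)=\pi_1(C)^\trans$, so $\pi_1(C)=\nMat{K}{n}$ if and only if $\pi_2(C)=\nMat{K}{n}$ (this is what rules out ``mixed'' degenerations), and that $\pi_1(C)$ is generated by $a_1,\dots,a_r,b_1^\trans,\dots,b_r^\trans$. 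When the projections are not onto, the tuple lies in the locus $Z_r^{\mathrm a}$ where these $2r$ matrices fail to generate $\nMat{K}{n}$. When the projections are onto, a Goursat-type argument with the Skolem--Noether theorem shows $C$ is either everything or the graph $\Gamma_g=\{(x,gxg^{-1}):x\in\nMat{K}{n}\}$ of an inner automorphism; imposing $\sigma(\Gamma_g)=\Gamma_g$ forces $g(g^\trans)^{-1}$ to be central, i.e.\ $g$ is symmetric or skew-symmetric up to scalar. Thus $Z_r=Z_r^{\mathrm a}\cup\overline{Z_r^{\mathrm b}}$, where $Z_r^{\mathrm b}=\{(a_i,b_i)_i:\exists\,g=\pm g^\trans\text{ invertible with }b_i=ga_ig^{-1}\ \forall i\}$.

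Next I would compute $\dim Z_r^{\mathrm a}$. The linear automorphism of $V_r\cong\nMat{K}{n}^{2r}$ sending $((a_i,b_i))_i$ to $(a_1,\dots,a_r,b_1^\trans,\dots,b_r^\trans)$ carries $Z_r^{\mathrm a}$ isomorphically onto the locus of $2r$-tuples failing to generate $\nMat{K}{n}$, whose dimension is known from the matrix case \cite{First_2022_generators_of_alg_over_comm_ring}: by Burnside such a tuple shares a nonzero proper invariant subspace $W$, and fibering over the choice of $W$ of dimension $d$ gives an incidence variety of dimension $mn^2-(m-1)d(n-d)$ with $m=2r$, maximized at $d\in\{1,n-1\}$. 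This yields $\dim Z_r^{\mathrm a}=2rn^2-(2r-1)(n-1)$ and identifies the two top-dimensional components coming from a common invariant line and a common invariant hyperplane.

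It then remains to show $\overline{Z_r^{\mathrm b}}$ is dominated by $Z_r^{\mathrm a}$. Parametrizing the symmetric branch by $(g,a_1,\dots,a_r)\mapsto((a_i,ga_ig^{-1}))_i$ on (invertible symmetric $g$)$\times\nMat{K}{n}^r$, of domain dimension $\tfrac{n(n+1)}{2}+rn^2$, and noting the fibre always contains the scalar multiples $\lambda g$, I get $\dim Z_r^{\mathrm b}\le\tfrac{n(n+1)}{2}+rn^2-1$; the skew branch is smaller by $n$. A direct comparison gives $\dim Z_r^{\mathrm a}-\dim\overline{Z_r^{\mathrm b}}\ge r\big((n-1)^2+1\big)-\tfrac{n(n-1)}{2}$, strictly positive for all $r\ge1$ and $n\ge2$, whence $\dim Z_r=\dim Z_r^{\mathrm a}=2rn^2-(2r-1)(n-1)$. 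The case $n=1$ is handled by hand: there $A=K\times K$ with the swap involution, and a tuple fails to generate precisely when $a_i=b_i$ for all $i$, a linear subspace of dimension $r$.

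I expect the main obstacle to be the classification step rather than the arithmetic. The crux is the interaction between the involution and the Goursat/Skolem--Noether description of subalgebras of $\nMat{K}{n}\times\nMat{K}{n}$: one must verify both that $\pi_2(C)=\pi_1(C)^\trans$ (so $Z_r^{\mathrm a}\cup Z_r^{\mathrm b}$ genuinely exhausts the non-generating locus) and that $\sigma$-stability of a graph is \emph{exactly} symmetry or skew-symmetry of $g$. Once this decomposition is pinned down, the dimension of $Z_r^{\mathrm a}$ is imported from the matrix case and the bound on $Z_r^{\mathrm b}$ follows from the scalar-fibre observation, so the remaining comparison is routine.
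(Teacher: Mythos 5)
Your proof is correct, and apart from the classification step it takes a genuinely different route from the paper's. The classification itself is common to both: your Goursat/Skolem--Noether analysis, together with the observations that $\pi_2(C)=\pi_1(C)^\trans$ and that $\sigma$-stability of a graph forces $g=\pm g^\trans$, is precisely Proposition~\ref{PR:maximal-subalgebras}. From there the paper runs the orbit--stabilizer machinery of Theorem~\ref{TH:dimZr-and-irred-comps} (via Proposition~\ref{PR:dim-Zr-detailed}): it computes $\Aut_K(A)\cong\PGL\rtimes S_2$ (Proposition~\ref{PR:aut-group}), the $\PGL$-orbits of the maximal subalgebras (Proposition~\ref{PR:representatives}), their stabilizers $\PS_{V(i)}^\times$, $\POb$, $\PSp$ (Proposition~\ref{PR:stabilizers}), and the number of generators of each representative (Propositions~\ref{PR:central-simple} and~\ref{PR:subalgebra-generators}), and then maximizes $\dim G+r\dim A_i-\dim H_i$. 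You bypass all of this: for the subspace-type locus, the linear change of variables $((a_i,b_i))_i\mapsto(a_1,\dots,a_r,b_1^\trans,\dots,b_r^\trans)$ identifies $Z_r^{\mathrm{a}}$ with the locus of non-generating $2r$-tuples in $\nMat{K}{n}$, so its dimension --- and in particular the lower bound, which in the paper rests on the generation statement $\gen(A_{V(1)})\leq 1$ --- is imported wholesale from \cite{First_2022_generators_of_alg_over_comm_ring}; for the graph-type locus, the scalar-fibre parametrization gives an upper bound (which happens to be sharp in characteristic $\neq 2$, though you need not prove sharpness). Trading the involution for a doubling of the tuple length is the nice idea here, and it makes the proof of the dimension formula shorter and essentially independent of the automorphism-group analysis. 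What the paper's heavier route buys is reuse: the stabilizer and generator computations are exactly what is needed later to pin down the irreducible components of $Z_r$ (Theorem~\ref{TH:irred-comps}), which your one-sided bound on $Z_r^{\mathrm{b}}$ and coarse treatment of the symmetric versus alternating branches cannot deliver. Two points should be added for completeness: first, reduce to $K=\overline{K}$ at the outset (the formation of $Z_r$ commutes with base change), since Burnside's theorem and the incidence-variety count you invoke require it; second, in characteristic $2$ ``skew-symmetric'' coincides with ``symmetric'', so your $Z_r^{\mathrm{b}}$ still exhausts all $\sigma$-stable graphs and the argument goes through unchanged.
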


We also determine   the irreducible components of
$Z_r$ in Theorem~\ref{TH:irred-comps}; it has  $n$ or $n+1$ irreducible components
depending on the values of $n$, $r$ and $\Char K$.

In addition,  our Proposition~\ref{PR:forms-of-A} says
that the $R$-forms of $A$ from    (iv)
are the Azumaya $R$-algebras with unitary involution (in the sense of 
\cite[\S1.4]{First_2022_octagon}). Thus, our computation of
$\dim Z_r$ and \cite[Thm.~6.1]{First_2022_generators_of_alg_over_comm_ring}
implies: 

\begin{cor}\label{CR:generators-of-Azumaya-alg}
	Let $R$ be a ring of Krull dimension $d$  finitely generated over an infinite field $K$,
	let $n>1$ and let $(B,\tau)$ be a degree-$n$ Azumaya algebra with a unitary involution  over $R$.
	Then
	\[
	\gen_R(B,\tau)\leq \Floor{\frac{d}{2n-2}+\frac{3}{2}}.
	\]
\end{cor}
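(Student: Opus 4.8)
The plan is to feed the dimension formula of Theorem~\ref{TH:codim-unitary} into the general generation bound of \cite[Thm.~6.1]{First_2022_generators_of_alg_over_comm_ring}, which converts a lower bound on $\codim(Z_r,V_r)$ into an upper bound on the number of generators of any $R$-form of $A$. The first, essentially bookkeeping, step is to observe that $(B,\tau)$ genuinely \emph{is} an $R$-form of $A=(\nMat{K}{n}\times\nMat{K}{n},(a,b)\mapsto(b^\trans,a^\trans))$: this is exactly Proposition~\ref{PR:forms-of-A}, which identifies the $R$-forms of $A$ with the degree-$n$ Azumaya $R$-algebras carrying a unitary involution. Hence all the information about $Z_r\subseteq V_r$ is available to bound $\gen_R(B,\tau)$.

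Next I would compute the codimension explicitly. The underlying $K$-vector space of $A$ has dimension $2n^2$, so the affine space $V_r$ underlying $A^r$ satisfies $\dim V_r=2rn^2$. Using the $n>1$ branch of Theorem~\ref{TH:codim-unitary},
\[
\codim(Z_r,V_r)=\dim V_r-\dim Z_r=2rn^2-\bigl(2rn^2-(2r-1)(n-1)\bigr)=(2r-1)(n-1),
\]
which is strictly increasing in $r$ (so that, whichever exact form the hypothesis of \cite[Thm.~6.1]{First_2022_generators_of_alg_over_comm_ring} takes --- a codimension bound for $r$ alone or for all indices $\ge r$ --- it is met simultaneously). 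Applying that theorem, one gets $\gen_R(B,\tau)\le r$ as soon as $(2r-1)(n-1)\ge d+1$. Solving this inequality, the least admissible $r$ is
\[
r^{*}=\Ceil{\tfrac{d+1}{2n-2}+\tfrac12},
\]
and therefore $\gen_R(B,\tau)\le r^{*}$.

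It then remains to match $r^{*}$ with the asserted bound, i.e.\ to verify the arithmetic identity $\Ceil{\tfrac{d+1}{2n-2}+\tfrac12}=\Floor{\tfrac{d}{2n-2}+\tfrac32}$. This is the one step that is more than routine simplification: the identity is \emph{false} for a general modulus and relies essentially on $2n-2$ being even. Concretely, I would write $d=q(2n-2)+s$ with $0\le s\le 2n-3$, reduce both sides to $q$ plus a term depending only on the residue $s$, and then check the two ranges $s<n-1$ and $s\ge n-1$ separately; in each range both sides agree precisely because the half-integer thresholds $\tfrac{s}{2n-2}+\tfrac12$ and $\tfrac{s+1}{2n-2}+\tfrac12$ never land exactly on an integer when the modulus is even (for an odd modulus they would collide at the middle residue, breaking the identity). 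Carrying out this residue comparison completes the proof and yields the stated bound $\gen_R(B,\tau)\le\Floor{\tfrac{d}{2n-2}+\tfrac32}$.
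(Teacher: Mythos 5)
Your proof is correct and takes essentially the same route as the paper's: identify $(B,\tau)$ as an $R$-form of $A$ via Proposition~\ref{PR:forms-of-A}, compute $c_A(r)=(2r-1)(n-1)$ from Theorem~\ref{TH:codim-unitary}, and feed this into Theorem~\ref{TH:cAr-and-number-of-generators}. The only divergence is the closing arithmetic: the paper observes directly that an integer $r$ satisfies $r>\frac{d}{2n-2}+\frac{1}{2}$ if and only if $r\geq\Floor{\frac{d}{2n-2}+\frac{3}{2}}$ (a parity-free fact, since the smallest integer exceeding a real $x$ is $\floor{x}+1=\floor{x+1}$), whereas your detour through the non-strict inequality $(2r-1)(n-1)\geq d+1$ and the quantity $\Ceil{\frac{d+1}{2n-2}+\frac{1}{2}}$ is what forces the residue-by-residue comparison and makes the evenness of $2n-2$ appear essential.
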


By building on \cite[Thm.~1.5(b)]{First_2022_generators_of_alg_over_comm_ring}
and \cite{Gant_2024_space_gens_2by2_matrix},
which give  examples   of degree-$n$ Azumaya algebras (without involution)
over rings of Krull dimension $d$ requiring slightly more than $ \frac{d}{2n-2} $
elements to generate,  we also observe that there exist
degree-$n$ Azumaya algebras with unitary involution  over finitely generated $K$-rings
which cannot be generated by less than  $ \frac{d}{4n-4 }  $ elements, provided $\Char K=0$;
see Remark~\ref{RM:lower-bounds}.

\medskip

Our approach to finding the dimension and irreducible components of $Z_r$
in case (iv) relies on the following result, which gives a method
to find them for any $K$-algebra $A$ (or even a \emph{multialgebra},
see Section~\ref{sec:non-generators}) satisfying a mild   assumption.

\begin{thm}\label{TH:dimZr-and-irred-comps}
	Suppose $K$ is algebraically closed. Let $G$ be the identity connected component
	of the $K$-algebraic group $\Aut_K(A)$, and
	suppose that the action of $G(K)$ on the maximal subalgebras
	of $A$ has finitely many orbits represented by $A_1,\dots,A_t$.
	For   $i\in\{1,\dots,t\}$,
	let $X_i$ be the Zariski closure of
	$\bigcup_{g\in G(K)}g(A_i)^r$ in $V_r$. Then: 
	\begin{enumerate}[label=(\roman*)]
		\item The irreducible components of $Z_r$
		are the maximal members of the family $X_1,\dots,X_t$.
		\item
		$\dim Z_r \leq  \dim G+\max\{r\dim A_i- \dim \Stab_{G}(A_i)\where i\in\{1,\dots,t\}\}$,
		and equality holds if the maximum is attained at an index $i$ for which $A_i$
		can be generated by $r$ elements.
	\end{enumerate}
\end{thm}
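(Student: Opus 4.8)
The plan is to first identify $Z_r$ set-theoretically. Since $K$ is algebraically closed, the $K$-points of $Z_r$ are exactly the $r$-tuples $(a_1,\dots,a_r)\in A^r$ that generate a proper subalgebra of $A$; as $A$ is finite-dimensional, such a tuple lies in some maximal subalgebra, so $Z_r(K)=\bigcup_{B} B^r$, the union ranging over all maximal subalgebras $B$ of $A$. Using the hypothesis that $G(K)$ acts on the maximal subalgebras with orbit representatives $A_1,\dots,A_t$, this rewrites as $Z_r(K)=\bigcup_{i=1}^t\bigcup_{g\in G(K)} g(A_i)^r$. Each $X_i$ is closed and contained in the closed set $Z_r$, being the closure of a subset of $Z_r(K)$, so $\bigcup_i X_i \subseteq Z_r$; the reverse containment holds on $K$-points by the display above. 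Hence $Z_r = \bigcup_{i=1}^t X_i$ as closed subsets, which is what matters for both parts since irreducible components are a topological notion.

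Next I would prove that each $X_i$ is irreducible. Let $\phi_i \colon G\times A_i^r \to V_r$ be the morphism $(g,(a_1,\dots,a_r))\mapsto (ga_1,\dots,ga_r)$ induced by the diagonal $G$-action on $V_r=A^r$; its image is precisely $\bigcup_{g\in G(K)} g(A_i)^r$, so $X_i=\overline{\im\phi_i}$. Since $G$ is connected, hence irreducible, and $A_i^r$ is an affine space, the product $G\times A_i^r$ is irreducible, and therefore so are its image and the closure $X_i$. Part~(i) then follows from the standard fact that the irreducible components of a finite union of irreducible closed sets are its maximal members (after discarding those contained in another).

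For the dimension bound in (ii) I would apply the theorem on fibre dimensions to the dominant morphism $\phi_i\colon G\times A_i^r \to X_i$. The key point is that every nonempty fibre of $\phi_i$ contains a copy of $\Stab_G(A_i)$: if $\phi_i(g,v)=w$, then for each $h\in\Stab_G(A_i)$ one has $hv\in A_i^r$ and $\phi_i(gh^{-1},hv)=w$. Hence the generic fibre has dimension at least $\dim\Stab_G(A_i)$, and
\[
\dim X_i = \dim(G\times A_i^r) - \dim(\text{generic fibre}) \le \dim G + r\dim A_i - \dim\Stab_G(A_i),
\]
which upon taking the maximum over $i$ gives the asserted inequality.

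Finally, for the equality clause, suppose the maximum is attained at an index $i$ for which $A_i$ is generated by $r$ elements, and fix a generating tuple $v\in A_i^r$. Here maximality does the work: if $\phi_i(g,v')=v$ with $v'\in A_i^r$, then $v\in g(A_i)^r$, so the algebra $A_i$ generated by $v$ lies inside the maximal subalgebra $g(A_i)$, forcing $A_i=g(A_i)$, i.e. $g\in\Stab_G(A_i)$; thus the fibre over $v$ is $\{(g,g^{-1}v):g\in\Stab_G(A_i)\}$, of dimension exactly $\dim\Stab_G(A_i)$. By upper semicontinuity of fibre dimension the generic (minimal) fibre dimension is then at most $\dim\Stab_G(A_i)$, and combined with the reverse inequality above this yields $\dim X_i = \dim G + r\dim A_i - \dim\Stab_G(A_i)$, hence equality in (ii). I expect this equality clause to be the main obstacle: the containment of a $\Stab_G(A_i)$-coset in every fibre is easy, but ruling out larger generic fibres requires exhibiting one fibre of minimal dimension, and it is precisely the interplay of $r$-generation with the maximality of $A_i$—guaranteeing that a generating tuple cannot accidentally lie in a distinct conjugate $g(A_i)$—that supplies it.
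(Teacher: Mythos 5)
Your proof is correct and follows essentially the same route as the paper: decompose $Z_r$ as the union of the closures $X_i$ of the orbit sets $\bigcup_g g(A_i)^r$, deduce irreducibility of each $X_i$ from connectedness of $G$, and apply the fibre-dimension theorem to $\phi_i\colon G\times A_i^r\to X_i$, bounding fibres below by embedded copies of $\Stab_G(A_i)$ and pinning down an exact fibre over a generating $r$-tuple. The only cosmetic difference is that the paper first reduces to fibres over points of $A_i^r$ by $G$-equivariance, whereas you treat arbitrary fibres directly; both arguments are sound.
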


Since tensoring with the algebraic closure $\overline{K}$
does not affect $\dim Z_r$, this implies: 

\begin{cor}
	Let $A$ be a finite-dimensional $K$-algebra such that the $\overline{K}$-algebra
	$A\otimes_K \overline{K}$
	satisfies the assumption of Theorem~\ref{TH:dimZr-and-irred-comps}.
	Then there exist  integers $a,b\geq 0$ such that for every sufficiently
	large $r$, we have $\dim Z_r = ar+b$. 
\end{cor}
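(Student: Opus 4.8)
The plan is to deduce the corollary directly from Theorem~\ref{TH:dimZr-and-irred-comps}(ii), so the first step is to arrange that this theorem applies. As noted just before the statement, base change to $\overline{K}$ does not affect $\dim Z_r$: the locus $Z_r$ is a $K$-subscheme of the affine space $V_r$, cut out by equations that are compatible with base change, so $Z_r\otimes_K\overline{K}$ is the analogous non-generating locus for $A\otimes_K\overline{K}$, and dimension of a finite-type scheme over a field is preserved under extension to the algebraic closure. I would therefore replace $K$ by $\overline{K}$ and $A$ by $A\otimes_K\overline{K}$ from the outset. By hypothesis the resulting algebra satisfies the assumption of Theorem~\ref{TH:dimZr-and-irred-comps}, giving maximal subalgebras $A_1,\dots,A_t$ representing the finitely many orbits, with $G$ the identity connected component of $\Aut_{\overline{K}}(A\otimes_K\overline{K})$.

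With this in hand, Theorem~\ref{TH:dimZr-and-irred-comps}(ii) gives $\dim Z_r\le \dim G+\max_i f_i(r)$, where $f_i(r):=r\dim A_i-\dim\Stab_G(A_i)$, together with the fact that equality holds whenever the maximum is attained at an index $i$ for which $A_i$ is generated by $r$ elements. The elementary observation driving the corollary is that each $f_i$ is affine in $r$, with slope $\dim A_i$ and intercept $-\dim\Stab_G(A_i)$, so the upper envelope $r\mapsto\max_i f_i(r)$ is convex and piecewise linear, hence eventually governed by the summands of largest slope. Concretely, set $d:=\max_i\dim A_i$ and $c:=\min\{\dim\Stab_G(A_i)\where \dim A_i=d\}$. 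For any $j$ with $\dim A_j<d$ one has $(rd-c)-f_j(r)=r(d-\dim A_j)+(\dim\Stab_G(A_j)-c)\to\infty$ as $r\to\infty$, while for $j$ with $\dim A_j=d$ one has $f_j(r)\le rd-c$ by definition of $c$. Hence there is a threshold $r_0$ beyond which $\max_i f_i(r)=rd-c$, attained at an index with $\dim A_i=d$ and $\dim\Stab_G(A_i)=c$.

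It remains to secure the equality clause for large $r$. Here I would use that a finite-dimensional algebra of dimension $m$ is generated by $m$ elements (any vector-space basis is an algebra generating set), and that anything generated by $m$ elements is generated by any $r\ge m$ elements. Since each $A_i$ is a subalgebra of $A$, we have $\dim A_i\le\dim A$, so every $A_i$ is generated by $r$ elements once $r\ge\dim A$. Thus for $r\ge\max(r_0,\dim A)$ the maximum is attained at an index whose representative is $r$-generated, and the equality clause yields $\dim Z_r=\dim G+rd-c$. This has the asserted form $ar+b$ with $a=d=\max_i\dim A_i\ge 0$ and $b=\dim G-c$; the latter is nonnegative because $\Stab_G(A_i)$ is a closed subgroup of $G$, so $c\le\dim G$.

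I do not anticipate a genuine obstacle, as all the substance resides in Theorem~\ref{TH:dimZr-and-irred-comps}; the corollary merely extracts eventual linearity from a maximum of finitely many affine functions. The one point deserving care is the interplay between the two thresholds: one must check that for large $r$ the index attaining the maximum can simultaneously be taken to have an $r$-generated representative, so that the upper bound of Theorem~\ref{TH:dimZr-and-irred-comps}(ii) is actually met with equality rather than merely bounded. The bound $\dim A_i\le\dim A$ resolves this cleanly, and the eventual linearity follows.
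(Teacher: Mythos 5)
Your proof is correct and is essentially the argument the paper intends: the corollary is stated as an immediate consequence of Theorem~\ref{TH:dimZr-and-irred-comps}, and your details (base change to $\overline{K}$, the eventual linearity of the upper envelope of the affine functions $r\mapsto r\dim A_i-\dim\Stab_G(A_i)$, and the bound $\gen(A_i)\le\dim A_i\le\dim A$ to trigger the equality clause for large $r$) are exactly the intended filling-in, including the check that $a=\max_i\dim A_i\ge 0$ and $b=\dim G-c\ge 0$.
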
 

Suppose now that $K$ is algebraically closed and $A$ is the algebra with involution
$(\nMat{K}{n}\times \nMat{K}{n},(a,b)\mapsto (b^\trans,a^\trans))$ from (iv).
Thanks to Theorem~\ref{TH:dimZr-and-irred-comps}, the task of finding $\dim Z_r$
breaks down to finding  the maximal subalgebras of $A$ (invariant under the involution)
up to the action of $G(K)$
($G$ as in the theorem), finding the       stabilizer  in $G(K)$ of each representative,
checking that every representative can   be generated by $r$ elements,
and dealing separately with any  exceptions to the latter. 
This is carried out in Section~\ref{sec:maximal-subalg}.

Determining the irreducible components is trickier, because  contrary to what one may expect,
the family $X_1,\dots,X_t$ from Theorem~\ref{TH:dimZr-and-irred-comps} 
may include non-maximal members, which indeed happens in our case;
see Lemma~\ref{LM:do-contain}.
Another difficulty  arises from the fact that
$\bigcup_{g\in G(K)}g(A_i)^r$ is not always  Zariski closed in $A^r=V_r(K)$ (Remark~\ref{RM:not-closed}).
Our Theorem~\ref{TH:irred-comps} determines the maximal $X_i$ for our $A$,
showing in particular that
the number of irreducible components is the expected number $t$, except
when $n$ is even and $\Char K=2$, or $(n,r)=(2,1)$.

\medskip

The paper is organized as follows: Section~\ref{sec:conventions}
sets notation and conventions for the paper.
Section~\ref{sec:non-generators} recalls some facts
about $Z_r$ and proves Theorem~\ref{TH:dimZr-and-irred-comps}.
In Section~\ref{sec:Az-alg-with-inv}, we recall Azumaya algebras
with unitary involution, and prove that the $R$-forms of
the algebra with involution $A$ in (iv) are precisely
the Azumaya $R$-algebras with unitary involution.
Section~\ref{sec:maximal-subalg} contains
results about the maximal subalgebras of $A$ from (iv)
that are needed in order to apply Theorem~\ref{TH:dimZr-and-irred-comps}.
We then use Theorem~\ref{TH:dimZr-and-irred-comps} to prove
Theorem~\ref{TH:codim-unitary} and Corollary~\ref{CR:generators-of-Azumaya-alg}
in Section~\ref{sec:dim-Zr}.
Finally, in Section~\ref{sec:irred-comps}, we use
Theorem~\ref{TH:dimZr-and-irred-comps}  to find the irreducible components of
$Z_r$.

\subsection*{Acknowledgements.}
We are grateful to Z.~Reichstein and B.~Williams for comments
on an earlier version of this manuscript. 
The second author is supported by an ISF grant no.\ 721/24.

\section{Conventions}
\label{sec:conventions}

Throughout, a ring means a commutative (associative) unital ring.
By default, algebras are associative and unital, and may
carry an involution when indicated. Subalgebras
are required to include the unity and be stable under the involution, if present. 
Nevertheless, in Section~\ref{sec:non-generators}
we will allow non-unital non-associative algebras.
Given a ring $R$, an $R$-ring means a commutative (associative, unital) $R$-algebra.
The group of invertible elements in $R$ is denoted
$R^\times$.

We always let $K$ denote a field. Elements of $K^n$
are   viewed as column vectors. 
The transpose of an $m\times n$ matrix $a$ is denoted $a^\trans$.
When there is no risk of confusion, we shall write
$\M$ for $\nMat{K}{n}$ and $\GL_n$ for $\nGL{K}{n}$.
The $n\times n$ identity matrix is denoted $I_n$, or just $I$.
When $n$ is even, we also let $\Omega=\Omega_n:=I_{n/2}\otimes[\begin{smallmatrix}0&-1\\1&0\end{smallmatrix}]$.

Recall that two matrices $a,a'\in\M$
are called congruent, denoted $a\sim a'$, if there is $g\in\GL_n$
such that $gag^\trans=a'$. This is equivalent to saying that the bilinear forms
$b(x,y)= x^\trans a y$ and $b'(x,y)=x^\trans a' y$ on $K^n$ are isomorphic.
Let $a\in\GL_n$ and suppose $K$
is algebraically closed.
Then it is known
(\cite[Props.~1.9, 1.18]{Elman_2008_algebraic_geometric_theory_of_quad_forms}, for instance)
that $a$ is symmetric and not alternating if and only if $a\sim I$ 
and $a$ is alternating if and only if $n$ is even and
$a\sim \Omega$.
(Here, as usual, $a$ is alternating if it is skew-symmetric and has zeroes on
the diagonal; the latter condition is needed only when $\Char K=2$.)

A $K$-variety is a reduced $K$-scheme of finite type, not necessarily irreducible.
A $K$-algebraic group is a group scheme of finite type over $K$, possibly not smooth.
When $K$ is algebraically closed, we will freely identify
any $K$-variety (resp.\ $K$-algebraic group) with its underlying set (resp.\ group) of $K$-points, to which
we give the Zariski topology. We will also define morphisms
between $K$-varieties
by specifying them as functions between the corresponding sets of $K$-points.
We shall view every finite-dimensional $K$-vector space 
as a $K$-affine space. In particular, such spaces will be endowed with the Zariski
topology. The $K$-projective space associated to a finite dimensional $K$-vector space
$V$ is denoted $\bbP(V)$.

\section{Spaces of Non-Generating Tuples}
\label{sec:non-generators}

Let us fix a finite-dimensional $K$-multialgebra $A$.
Recall that this means that $A$ is a $K$-vector space equipped with some $K$-multilinear
operations $\{\mu_i:A^{n_i}\to A\}_{i\in I}$. Typically, $A$ will be an ``ordinary'' $K$-algebra
--- the product being a $K$-bilinear map $\mu:A\times A\to A$ --- carrying some additional structure
such as an involution (a $K$-linear map $\mu:A\to A$) or a unit  element  (encoded as a multilinear map with
$0$ arguments $\mu: A^0\to A$). 
The $K$-algebraic group of $K$-multialgebra automorphisms of $A$
is denoted $\Aut_K(A)$.
As usual, the sub-multialgebra of $A$ generated by a subset
$S\subseteq A$ is the smallest subspace of $A$ that is closed under all the operations
of $A$. We write
\[
\gen(A)\qquad\text{or} \qquad \gen_K(A)
\]
for the smallest cardinality of a generating subset for $A$.
For a $K$-ring $R$, we let $A_R=A\otimes_K R$, which is a multialgebra over $R$.
One may now consider $R$-forms of $A$ as in the Introduction.

As before,
given $r\in\N$, we write $V_r$ for the $K$-affine space underlying $A^r$. 
In \cite{First_2022_generators_of_alg_over_comm_ring}, the second author, Reichstein and Williams
defined a closed subscheme $Z_r$ of $V_r$ such that for every $K$-field $L$, we have
\[
Z_r(L)=\{(a_1,\dots,a_r)\in A_L^r\suchthat
\text{$a_1,\dots,a_r$ do not generate $A_L$}\}.
\]
The codimension of $Z_r$ in the ambient variety $V_r$ is  
\[c_A(r):=r\dim A -\dim Z_r.\]
The following result from
\cite{First_2022_generators_of_alg_over_comm_ring}  says that bounding $c_A(r)$ from below 
provides an upper bound
for the number of generators of forms of $A$.  

\begin{thm}[{\cite[Thm.~6.1]{First_2022_generators_of_alg_over_comm_ring}}]
	\label{TH:cAr-and-number-of-generators}
	Let $d,r\in\N\cup\{0\}$. If $c_A(r)>d$, then every form of $A$ over a finitely generated
	$K$-ring $R$ of Krull dimension $d$ or less can be generated by $r$ elements as an $R$-multialgebra.
\end{thm}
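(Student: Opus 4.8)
The plan is to reinterpret generation of a form geometrically and then settle it by a single dimension count. Let $B$ be a form of $A$ over a finitely generated $K$-ring $R$ with $\dim R\le d$. Being a form, $B$ is fppf-locally isomorphic to $A_R$; in particular $B$ is a finite locally free $R$-module of rank $\dim_K A$, and it is classified by a torsor under $G=\Aut_K(A)$. Twisting $V_r$ and its $G$-invariant closed subscheme $Z_r$ by this torsor yields a geometric vector bundle $\pi\colon V_r^B\to\Spec R$ of relative dimension $r\dim_K A$ together with a closed subscheme $Z_r^B\subseteq V_r^B$ whose fibre over a point $x$ is the non-generating locus of $r$-tuples in $B\otimes_R\kappa(x)$. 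A section of $\pi$ is exactly an $r$-tuple $(b_1,\dots,b_r)\in B^r$, and I would first check, via Nakayama and the finite generation of $B$ as an $R$-module, that such a tuple generates $B$ as an $R$-algebra if and only if the corresponding section has image disjoint from $Z_r^B$. Thus it suffices to produce one section of $\pi$ avoiding $Z_r^B$.

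The dimension bookkeeping is what makes the hypothesis $c_A(r)>d$ enter. Since $Z_r^B$ is fppf-locally isomorphic to $Z_r$ over the base and dimension is insensitive to field extension, every fibre of $Z_r^B\to\Spec R$ has dimension $\dim Z_r=r\dim_K A-c_A(r)$. Hence $\dim Z_r^B\le d+r\dim_K A-c_A(r)$, whereas the image of a section has dimension $\dim R\le d$ and the total space has dimension at most $d+r\dim_K A$. The inequality $c_A(r)>d$ says precisely that $Z_r^B$ is smaller than $V_r^B$ by more than the dimension of a section, so on dimension grounds a ``sufficiently general'' section ought to miss $Z_r^B$.

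To turn this into a proof I would make the word ``general'' precise over the infinite field $K$ as follows. Because $\Spec R$ is affine, $V_r^B$ is globally generated, so I can choose finitely many global sections $t_1,\dots,t_m$ whose values generate every fibre. Letting $\lambda=(\lambda_1,\dots,\lambda_m)$ be coordinates on $\bbA^m_K$, the expression $s_\lambda=\sum_{i=1}^m\lambda_i t_i$ defines a section of $V_r^B$ over $\bbA^m_K\times\Spec R$, and I set
\[
\Gamma=\{(\lambda,x)\in\bbA^m_K\times\Spec R\suchthat s_\lambda(x)\in Z_r^B\}.
\]
For a point $x$ of $\Spec R$, the evaluation $\lambda\mapsto s_\lambda(x)$ is a surjective $\kappa(x)$-linear map $\bbA^m_{\kappa(x)}\to (V_r^B)_x$ (this is where global generation is used), so the fibre $\Gamma_x$ is the preimage of $(Z_r^B)_x$ and has dimension at most $(m-r\dim_K A)+\dim Z_r=m-c_A(r)$. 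The fibre-dimension theorem then gives $\dim\Gamma\le d+m-c_A(r)<m$, so the projection $\Gamma\to\bbA^m_K$ is not dominant; its image lies in a proper closed subset, and since $K$ is infinite there is a $K$-point $\lambda$ outside it. For such $\lambda$ the fibre $\Gamma_\lambda$ is empty, which means exactly that $s_\lambda$ avoids $Z_r^B$ and hence that the $r$-tuple $s_\lambda$ generates $B$.

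The main obstacle is the third paragraph: the clean fibrewise dimension estimate on $\Gamma$ must be handled with care for the upper semicontinuity of fibre dimension and for generic flatness, in order to pass from a uniform fibrewise bound to a bound on $\dim\Gamma$ over a base that need not be irreducible or equidimensional, and one must verify that the chosen sections generate \emph{every} fibre so that the evaluation maps are uniformly surjective. An alternative route that sidesteps the appeal to $K$-genericity is an induction on $d=\dim R$ in the style of Eisenbud--Evans, adjoining one generator at a time and invoking prime avoidance at the finitely many associated primes of the strata of $Z_r^B$; I find the single incidence-variety count above more transparent, and it is the argument I would carry out.
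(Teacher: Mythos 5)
Your proposal is correct, and it is essentially the argument behind this statement, which the paper itself does not reprove but imports from \cite[Thm.~6.1]{First_2022_generators_of_alg_over_comm_ring}: twist $Z_r\subseteq V_r$ by the torsor attached to $B$, reduce generation to producing a section of $V_r^B\to\Spec R$ avoiding $Z_r^B$ (Nakayama, plus the fact that every nonempty closed subset of $\Spec R$ contains a closed point), and then run the incidence-variety dimension count over the parameter space of linear combinations of module generators, using that $K$-points are Zariski dense in affine space over an infinite field. The caution in your final paragraph is not needed: the crude estimate $\dim X\leq \dim Y+\sup_y \dim X_y$ for finite-type morphisms of noetherian schemes requires no irreducibility, equidimensionality, or flatness hypotheses, so the single dimension count already closes the argument.
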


The formation of $Z_r$ commutes with base change, so its
dimension does not change if we extend scalars from $K$ to its algebraic closure $\overline{K}$.
We  may therefore restrict our attention to the case   $K=\overline{K}$.
Provided this holds, we will abuse the notation and use $Z_r$ to indicate its
set of $K$-points
as we do for   $K$-varieties.

Suppose that $K$ is algebraically closed and let $m$ denote the maximal dimension of
a proper subalgebra of $A$.
By \cite[Lem.~6.2]{First_2022_generators_of_alg_over_comm_ring}, we 
have $mr \leq \dim Z_r \leq mr+m(\dim A-m)$ for every
$r > m$. 
Our first result gives a recipe to find the exact dimension of $Z_r$ and also its  irreducible components; 
it readily implies  Theorem~\ref{TH:dimZr-and-irred-comps}.

\begin{prp}\label{PR:dim-Zr-detailed}
With notation as above,
assume that $K$ is algebraically closed and let $G$ be a closed
subgroup of   $\Aut_K(A)$ (more precisely,
its group of $K$-points).
Suppose that the action of $G $ on the set of maximal $K$-subalgebras of $A$ has finitely many orbits represented by
subalgebras $A_1,\dots,A_t$. For every $i\in\{1,\dots,t\}$,
let $H_i$ denote the stabilizer of $A_i$ in $G$. In addition, fixing  $r\in\N$,
let
\[
Y_i = \bigcup_{g\in G} g(A_i)^r
\qquad\text{and}\qquad
X_i = \overline{Y_i},
\]
where the closure is taken in  $A^r$ w.r.t.\ Zariski topology. Then:
\begin{enumerate}[label=(\roman*)]
	\item $Z_r=\bigcup_{i=1}^t X_i$.
	\item $\dim X_i \leq  \dim G + r\dim A_i-\dim H_i$, and if $\gen(A_i)\leq r$, equality holds.
	\item If $G$ is connected, then the irreducible components of
	$Z_r$ are the maximal members of the family  $X_1,\dots,X_t$.
	\item If   $\gen(A_i)\leq r$ and $Y_j$ is closed in $A^r$ for some $i\neq j$, 
	then $X_i\nsubseteq X_j$.
	\item If $H_i$ is a parabolic subgroup of $G$, then $Y_i$ is closed in $A^r$, i.e.,
	$X_i=Y_i$.
\end{enumerate}
\end{prp}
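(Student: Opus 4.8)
The plan is to route all five parts through a single object: for each $i$, the orbit map
\[
\phi_i\colon G\times A_i^r\to A^r,\qquad (g,(a_1,\dots,a_r))\mapsto (ga_1,\dots,ga_r),
\]
whose image is exactly $Y_i=\bigcup_{g\in G}g(A_i)^r$. I would first dispatch (i) on $K$-points. An $r$-tuple fails to generate $A$ precisely when the subalgebra it generates is proper, hence (as $\dim_K A<\infty$) contained in some maximal subalgebra; by hypothesis every maximal subalgebra equals $g(A_i)$ for some $g\in G$ and some $i$. Thus the tuple lies in $g(A_i)^r$ for some $g,i$, giving $Z_r(K)=\bigcup_i Y_i$ as sets. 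Since $Z_r$ is closed and we work with reduced structures, taking Zariski closures of this finite union yields $Z_r=\bigcup_i\overline{Y_i}=\bigcup_i X_i$.

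The engine for (ii) and (iv) is one observation: \emph{if $(a_1,\dots,a_r)\in A_i^r$ generates $A_i$ and all $a_j$ lie in a maximal subalgebra $B$, then $A_i\subseteq B$, so $B=A_i$ by maximality}; thus a generating tuple of $A_i$ sits in a unique maximal subalgebra. For the upper bound in (ii), note every fiber of $\phi_i$ over $\mathbf b=g\mathbf a$ identifies with $\{g'\in G:\mathbf b\in g'(A_i)^r\}\supseteq gH_i$, so all fibers have dimension $\ge\dim H_i$; applying the fiber-dimension theorem on each connected component of $G\times A_i^r$ (each of dimension $\dim G+r\dim A_i$) gives $\dim X_i=\dim Y_i\le\dim G+r\dim A_i-\dim H_i$. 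When $\gen(A_i)\le r$, the generating tuples form a nonempty open subset of $A_i^r$, and the observation shows that over their images the fiber is \emph{exactly} $gH_i\cong H_i$; hence the generic fiber has dimension $\dim H_i$ and the inequality becomes equality. Part (iv) follows from the same observation: choosing a generating $\mathbf a\in A_i^r\subseteq Y_i\subseteq X_i$, an inclusion $X_i\subseteq X_j=Y_j$ would force $\mathbf a\in g(A_j)^r$, whence $A_i\subseteq g(A_j)$ and so $A_i=g(A_j)$, contradicting that $A_i,A_j$ represent distinct $G$-orbits.

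For (iii), connectedness of $G$ makes each $G\times A_i^r$ irreducible (a product of irreducible $K$-varieties, $K=\overline{K}$), so $Y_i$ and hence $X_i=\overline{Y_i}$ is irreducible. Then (i) presents $Z_r$ as a finite union of irreducible closed sets, whose irreducible components are exactly the maximal members of $\{X_1,\dots,X_t\}$ under inclusion (discarding any $X_i$ contained in another), by the standard description of the components of such a union.

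Part (v) is where the genuine work lies, and I expect it to be the main obstacle. I would realize $Y_i$ as the image under the second projection of the incidence variety $W_i=\{(gH_i,\mathbf b)\in (G/H_i)\times A^r : \mathbf b\in g(A_i)^r\}$; this is well posed because $H_i$ stabilizes $A_i$, so $g(A_i)$ depends only on $gH_i$. The crux is checking that $W_i$ is \emph{closed}: the subspaces $g(A_i)\subseteq A$ assemble into the associated bundle $G\times^{H_i}A_i$, a subbundle of constant rank of the trivial bundle $(G/H_i)\times A$, which is Zariski-closed since a constant-rank subbundle is locally a direct summand; the $r$-fold fibre product over $G/H_i$ then gives $W_i$ closed in $(G/H_i)\times A^r$. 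Once this is in place, parabolicity of $H_i$ makes $G/H_i$ projective, hence complete, so the projection $(G/H_i)\times A^r\to A^r$ is a closed map and $Y_i=\mathrm{pr}(W_i)$ is closed, i.e.\ $X_i=Y_i$. The delicate point to nail down carefully is precisely that the family $\{g(A_i)\}$ is an honest algebraic subbundle rather than merely a constructible family of subspaces; after that, completeness of $G/H_i$ finishes the argument routinely.
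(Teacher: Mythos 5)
Your proposal is correct and follows essentially the same route as the paper's proof: the orbit map $G\times A_i^r\to A^r$ together with the fiber-dimension theorem for (i)--(iii), the ``a generating tuple of $A_i$ determines $A_i$'' observation for (iv), and an incidence variety over a complete base whose projection to $A^r$ is closed for (v). The only cosmetic difference is in (v): the paper maps $G/H_i$ to the Grassmannian $\operatorname{Gr}(m,A)$ and uses the incidence variety over its (closed, complete) image there --- where closedness is immediate from the tautological subbundle --- rather than your associated bundle $G\times^{H_i}A_i$ over $G/H_i$, which neatly sidesteps the ``delicate point'' you flagged.
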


We will usually apply the proposition with $G$ being the identity
connected component of $\Aut_K(A)$.

\begin{proof}
Observe that $G$ acts on $A^r$ from the left by $g(a_1,\dots,a_r)=(g(a_1),\dots,g(a_r))$.

(i) It is enough to show that $Z_r=\bigcup_{i=1}^t Y_i$,
since that would imply $Z_r=\overline{Z_r}=\bigcup_{i=1}^t \overline{Y_i}=\bigcup_{i=1}^t X_i$.
Every $r$-tuple $a=(a_1,\dots,a_r)\in Y_i$ is contained in $g(A_i)^r$ for some $g\in G$,
and thus generates a subalgebra of $g(A_i)$. In particular, $a$ cannot generate $A$,
so $a\in Z_r$.
Conversely, any tuple $a=(a_1,\dots,a_r)\in Z_r$ generates a proper subalgebra of $A$,
which is contained in $g(A_i)$ for some $i$ and $g\in G$, so $a\in \bigcup_{i=1}^t Y_i$.

(ii) Define a morphism $\vphi_i : G\times A_i^r\to X_i$ by $\vphi_i(g,a)=
g(a)$. The image of $\vphi_i$ is precisely $Y_i$, which is dense in $X_i$.
In view of the fiber dimension theorem, in order
to prove
that $\dim X_i \leq  \dim G + r\dim A_i-\dim H_i$,
it is enough to show that
\begin{equation} \label{EQ:dim-im}
\dim(\vphi^{-1}(a)) \geq \dim(H_i) , 
\end{equation}
where $a = (a_1, \dots, a_r) \in X_i$ is a point in general position in $X_i$
(i.e., $a$ ranges in a dense subset of $X_i$). We also need to show  
that equality holds when $A_i$ can be generated by $r$ elements.

We may assume that $a\in Y_i$.
Moreover, since $\varphi_i$ is $G$-equivariant,
 we may 
replace $a$ with
$g(a)$ for a suitable
$g\in G$ and assume without loss of generality that $a\in A_i^r$.
Define a morphism
$\psi:H_i\to \vphi_i^{-1}(a)$
by $\psi(h)=(h^{-1},h(a))$ (we have $h(a)\in A^r_i$ because $H_i$ stabilizes $A_i$ and $a\in A_i^r$). Then $\psi$ is one-to-one and hence \eqref{EQ:dim-im} holds. 

Suppose now that $A_i$ can be generated by $r$ elements.
Arguing as before, we again assume that $a\in A_i^r$.
However, we are now allowed to further assume that ${a}$ generates $A_i$. 
We claim that $\psi:H_i\to \vphi_i^{-1}(a)$
is also surjective, and thus $\dim \vphi^{-1}_i(a)=\dim H_i$.
Indeed, let $(g,x)\in \vphi^{-1}(a)$. Then $ x=g^{-1}(a)$. Since
the tuple $a$ generates $A_i$ and $x\in A_i^r$, this means that 
$g^{-1}(A_i)\subseteq A_i$, so $g^{-1}\in H_i$
and $\psi(g^{-1})=(g,g^{-1}(a))=(g,x)$. 

(iii) Since $G$ is a connected algebraic group, it is irreducible.
Thus,  $G\times A_i^r$ is irreducible 
and   so is   $X_i = \overline{\vphi_i(G\times A_i^r)}$.
Since $Z_r=\bigcup_{i=1}^t X_i$, the irreducible components
of $Z_r$ are the maximal members of $X_1,\dots,X_t$.

(iv) 
	Let $U_i$ denote the set of $r$-tuples in $A_i$ which generate $A_i$ as a $K$-algebra.
	Our assumptions imply that $U_i\neq\emptyset$. 
	Since $U_i\subseteq X_i$ and $Y_j=X_j$, in order to show that $X_i\nsubseteq X_j$,
	it is enough to show that $U_i\cap Y_j=\emptyset$.
	For the sake of contradiction, suppose 
	that there is $a\in U_i\cap Y_j$. Then there is some $g\in G $ such that
	$a\in g (A_j)^{r}$.
	Since the tuple $a$ generates $A_i$, it follows that $A_i\subseteq g(A_j)$.
	The maximality of $A_i$ and $A_j$ now forces $A_i=g(A_j)$, but this contradicts our
	assumption that $A_i$ and $A_j$ are not in the same $G $-orbit. Thus $a$ cannot
	exist and $U_i\cap Y_j=\emptyset$.

(v) Recall that $H_i$ is parabolic if $G/H_i$ is a complete $K$-variety.
Write $m=\dim A_i$ 
and let $\operatorname{Gr}(m,A)$ denote the Grassmannian variety of $m$-dimensional
subspaces of  $A$. There is a morphism $\psi: G/H_i\to \operatorname{Gr}(m,A)$
mapping the coset $gH_i$ to $g(A_i)$. Since $G/H_i$ is complete, its image $Z$ is a closed
subvariety of $\operatorname{Gr}(m,A)$, hence complete. 
Consider the closed subvariety $W$ of $Z\times A^r$ consisting
of tuples $(V,a_1,\dots,a_r)$ with $a_1,\dots,a_r\in V$.
The image of $W$ under the second projection $p_2: Z\times A^r\to A^r$
is precisely $Y_i$. Since $Z$ is complete, $p_2$ is closed, and we conclude that $Y_i=p_2(W)$
is closed in $A^r$. 
\end{proof}

\begin{remark}
	With notation
	as in Proposition~\ref{PR:dim-Zr-detailed}, 
	it may happen that $X_i\subsetneq X_j$ for some $i\neq j$ even when $G$ is connected. 
	In addition, is may also happen that 
	$Y_i$ is not closed, i.e., $Y_i\neq X_i$.
	This is demonstrated in Lemma~\ref{LM:do-contain}
	and Remark~\ref{RM:not-closed} below, and shows that the
	assumptions   in (iv) and   (v) cannot be removed in general.
\end{remark}

\begin{remark}
	It is in general not true that the action of (the $K$-points of) 
	$\Aut_K(A)$ on the maximal subalgebras
	of $A$ has finitely many orbits. Here is such an example with $A$ commutative, associative
	and unital: Let $f\in K[x_0,x_1,x_2]$ be a homogeneous polynomial such that
	the equation $f=0$ defines a genus-$3$ curve in $\bbP^2_{K}$
	having no nontrivial automorphisms; such $f$ exists, e.g., by \cite{Poonen_2000_varieties_no_aut}.
	Write $m=\deg f>1$ and
	let $A=K[x_0,x_1,x_2]/((x_0,x_1,x_2)^{m+1}+(f))$. Then $A$ is a local $K$-algebra
	with Jacobson radical $J=x_0A+x_1A+x_2A$. Let $\quo{x_i}$ denote the image
	of $x_i$ in $J/J^2$. Then every automorphism $g$ of $A$ induces an element
	$\quo{g}\in \GL(J/J^2)$ which must satisfy $f(\quo{g}(\quo{x_0}),
	\quo{g}(\quo{x_1}),\quo{g}(\quo{x_2}))\in Kf(\quo{x}_0,\quo{x}_1,\quo{x}_2)$.
	Thus, $\quo{g}$ induces an automorphism of the projective curve $f=0$, and   must therefore
	be scalar. As a result, if $U$  is a codimension-$1$ 
	subspace of $J$ containing $J^2$, then $g(U)=U$.
	For every such $U$, the space  $K+U$ is a maximal subalgebra of $A$
	fixed by $\Aut_K(A)$, and there are infinitely many such subalgebras since
	$K$ is infinite.
\end{remark}

%
%

The following proposition gives a more sophisticated way
for checking that $X_i\nsubseteq X_j$ when $\dim A_i=\dim A_j$. It will be needed in
Section~\ref{sec:irred-comps}.

\begin{prp}\label{PR:Grassmannian-test}
	Assume that $K$ is algebraically closed, let $A$ be a $K$-multialgebra and let
	$G$ be a closed subgroup of $\Aut_K(A)$. Let $A_1$ and $A_2$ be two subalgebras of $A$
	having the same dimension $m$.
	Let $X$ denote the Grassmannian of $m$-spaces inside $A$ and,  for $i=1,2$, let
	$Z_i=\{g(A_i)\where g\in G\}\subseteq X$.
	If $\overline{Z_1}\nsubseteq \overline{Z_2}$
	(the closures are taken in $X$), then $\overline{\bigcup_{g\in G}g(A_1)^r}\nsubseteq
	\overline{\bigcup_{g\in G}g(A_2)^r} $ (the closures are in $A^r$) for every $r\geq \gen_K(A_1)$.
\end{prp}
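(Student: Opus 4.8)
The plan is to prove the contrapositive: I will show that if $\overline{\bigcup_{g\in G}g(A_1)^r}\subseteq \overline{\bigcup_{g\in G}g(A_2)^r}$, then $\overline{Z_1}\subseteq\overline{Z_2}$. The main device is the incidence variety
\[
\calT=\{(V,(a_1,\dots,a_r))\in X\times A^r\suchthat a_1,\dots,a_r\in V\},
\]
which is closed in $X\times A^r$ (each condition $a_j\in V$ cuts out a copy of the tautological subbundle over the Grassmannian), together with its two projections $p_1\colon\calT\to X$ and $p_2\colon\calT\to A^r$. Since $X$ is a Grassmannian, hence complete, the projection $X\times A^r\to A^r$ is proper, so its restriction $p_2$ to the closed subscheme $\calT$ is proper, and in particular a closed map. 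Writing $W_i=\overline{\bigcup_{g\in G}g(A_i)^r}$, I first note that $\bigcup_{g\in G}g(A_i)^r=p_2(p_1^{-1}(Z_i))$; then, because $p_1^{-1}(\overline{Z_i})$ is closed and $p_2$ is closed, the set $p_2(p_1^{-1}(\overline{Z_i}))$ is closed and contains $\bigcup_{g\in G}g(A_i)^r$, hence contains $W_i$. This yields the inclusion $W_i\subseteq p_2(p_1^{-1}(\overline{Z_i}))$, which transfers information from the $A^r$ side back to the Grassmannian.

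The second ingredient is that the locus of $m$-dimensional sub-multialgebras is Zariski closed in $X$: for each operation $\mu\colon A^{k}\to A$ of $A$, the condition $\mu(V^{k})\subseteq V$ (and, for a unit, $1\in V$) is a closed condition on the Grassmannian. Since $Z_2$ consists of subalgebras, so does its closure $\overline{Z_2}$. Now assume $W_1\subseteq W_2$ and fix any $V\in Z_1$, say $V=g(A_1)$ with $g\in G$. As $g$ is a multialgebra automorphism, it restricts to an isomorphism $A_1\xrightarrow{\sim}V$, so $\gen_K(V)=\gen_K(A_1)\le r$ and $V$ admits a generating $r$-tuple $a\in V^r$. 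Then $a\in\bigcup_{g\in G}g(A_1)^r\subseteq W_1\subseteq W_2\subseteq p_2(p_1^{-1}(\overline{Z_2}))$, so there exists $V'\in\overline{Z_2}$ with $a\in(V')^r$. Since $V'$ is a subalgebra containing the tuple $a$, which generates $V$, we get $V\subseteq V'$; as $\dim V=m=\dim V'$, this forces $V=V'\in\overline{Z_2}$. Hence $Z_1\subseteq\overline{Z_2}$, and taking closures gives $\overline{Z_1}\subseteq\overline{Z_2}$, as required.

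The step requiring care — and the conceptual heart of the argument — is the interplay between the two closure operations. Properness of $p_2$ (coming from completeness of the Grassmannian) is what lets me replace $\overline{\bigcup_{g}g(A_i)^r}$ by the image of the single closed set $p_1^{-1}(\overline{Z_i})$, while closedness of the subalgebra locus in $X$ is what guarantees that the limiting subspace $V'$ is again a subalgebra, so that $V\subseteq V'$ together with $\dim V=\dim V'$ can conclude $V=V'$. Without the latter, a limit of the subspaces $g(A_2)$ could contain $V$ strictly and the argument would break; this is precisely where the hypothesis $r\ge\gen_K(A_1)$ enters, to manufacture a generating tuple lying inside $V$.
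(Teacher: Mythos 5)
Your proof is correct, and it takes a genuinely different route from the paper's. The paper works on the tuple side: it shows that the locus $W_{\leq m}$ of $r$-tuples generating a subalgebra of dimension at most $m$ is closed, constructs the ``generated subalgebra'' map $\vphi\colon W_m\to X$ and verifies it is a morphism via the Pl\"ucker embedding, and then transfers $\overline{Y_1}\subseteq\overline{Y_2}$ to $\overline{Z_1}\subseteq\overline{Z_2}$ using density of generating tuples and continuity of $\vphi$; this forces a side case analysis on whether $r\geq\gen_K(A_2)$. You instead work on the Grassmannian side: the incidence variety $\calT\subseteq X\times A^r$ together with properness of $p_2$ (completeness of $X$) converts the hypothesis $\overline{Y_1}\subseteq\overline{Y_2}$ into the statement that every tuple in $\overline{Y_2}$ lies inside some $V'\in\overline{Z_2}$, and closedness of the sub-multialgebra locus in $X$ guarantees $V'$ is a subalgebra, so that a generating tuple for $V=g(A_1)$ pins down $V=V'$ by dimension count. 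Your approach buys two things: it avoids constructing the morphism $\vphi$ altogether (the most technical part of the paper's proof), and it needs no hypothesis or case distinction involving $\gen_K(A_2)$, since membership of $V'$ in $\overline{Z_2}$ is extracted from the correspondence rather than from generating tuples for the $A_2$-orbit. It is worth noting that your properness device is essentially the same one the paper uses elsewhere, in the proof of Proposition~\ref{PR:dim-Zr-detailed}(v), so your argument unifies that part with the Grassmannian test; what the paper's construction of $\vphi$ buys in exchange is a reusable morphism $W_m\to X$, but for this particular statement your route is the more economical one.
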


\begin{proof}
	Fix some $r\in\N$. Let $W_{\leq m}$
	denote the set of tuples $a=(a_1,\dots,a_r)\in A^r$
	which generate a subalgebra of $A$ of dimension $m$ or less.
	
	We first claim that $W_{\leq m}$ is closed in $A^r$.
	Indeed, let $M$ denote the set of formal expressions obtained
	by repeatedly applying the operations of the multialgebra $A$
	on variables $x_1,\dots,x_r$. Every $(m+1)$-tuple
	$w=(w_1,\dots,w_{m+1})\in M^{m+1}$ determines a morphism $f_w:A^r\to \bigwedge_K^{m+1} A$
	given by $f_w(a)=w_1(a)\wedge\dots\wedge w_{m+1}(a)$.
	The subset $W_{\leq m}$ is the intersection $\bigcap_{w\in M^{m+1}} f_w^{-1}(0)$,
	so it is closed in $A^r$.
	
	Let $W_{m} = W_{\leq m}-W_{\leq (m-1)}$ (with $W_{-1}=\emptyset$).
	Then $W_m$ is open $W_{\leq m}$. We view $W_{\leq m}$ as a closed subvariety
	of $A^r$ and $W_m$ as an open subvariety of $W_{\leq m}$.
	
	Next, we claim that the function $\vphi: W_m\to X$ mapping an $r$-tuple
	$a\in W_m$ to the $K$-subalgebra that it generates is a morphism.
	To that end, we embed in $X$ in $\bbP(\bigwedge^m_K A)$ using the Pl\"ucker embedding.
	It is enough to show that the composition $W_m\to X\to \bbP(\bigwedge^m_K A)$,
	denoted $\vphi'$,
	is defined locally by rational functions.
	Let $a\in W_m$. Then there are $w_1,\dots,w_m\in M$ such
	that $w_1(a),\dots,w_m(a)$ form $K$-basis for the $K$-subalgebra generated
	by $a$. The image of $a$ under $\vphi'$
	is therefore the line in $\bigwedge^m_K A$ 
	generated by $w_1(a)\wedge\dots \wedge w_m(a)$. 
	The assignment $x\mapsto w_1(x)\wedge\dots\wedge w_m(x)$ defines $\vphi'$ in a 
	Zariski neighborhood of $a$ in $W_m$
	(namely, whenever it does not vanish),
	and by choosing bases to $A^r$ and $\bigwedge^m_K A$, one readily sees
	it is a polynomial function. Thus, $\vphi$ is a morphism.
	
	Now, for $i=1,2$, put
	$Y_i=\bigcup_{g\in G}g(A_i)^r$.
	Then $Y_i$ is a subset of $W_{\leq m}$
	and $U_i:=Y_i\cap W_m$ is the set of $r$-tuples
	in $A^r$ which generate a $K$-subalgebra of the form $g(A_i)$ with $g\in G$.
	In particular, $U_i$ is open in $Y_i$ and $\vphi(U_i)\subseteq Z_i$.
	When  $r\geq \gen_K(A_i)$, each $g(A_i)$ is generated by
	some $a\in U_i$, which implies  that $U_i$ is dense in $Y_i$
	and also that $\vphi(U_i)=Z_i$.
	
	For the sake of contradiction, suppose that
	$r\geq \gen_K(A_1)$, but   $\overline{Y_1}\subseteq \overline{Y_2}$.
	Then $U_1\subseteq \overline{Y_2}\cap W_m$.
	If $r<\gen_K(A_2)$, then we would have $Y_2\subseteq W_{\leq(m-1)}$,
	meaning that $\overline{Y_2}\subseteq W_{\leq (m-1)}$ and $\overline{Y_2}\cap W_m=\emptyset$,
	which contradicts with $U_1\neq\emptyset$.
	It must therefore be the case that $r\geq \gen_K(A_2)$,
	hence $U_1\subseteq\overline{Y_2}=\overline{U_2}$.
	Now,
	the continuity of $\vphi$ implies 
	that $Z_1=\vphi(U_1)\subseteq \vphi(\overline{U_2})\subseteq\overline{\vphi(U_2)}= \overline{Z_2}$, so $\overline{Z_1}\subseteq \overline{Z_2}$.
	This contradicts our assumption $\overline{Z_1}\nsubseteq \overline{Z_2}$,
	so we must have $\overline{Y_1}\nsubseteq \overline{Y_2}$.
\end{proof}

\section{The Case of Azumaya Algebras with Unitary Involution}
\label{sec:Az-alg-with-inv}

Recall our standing assumption that all algebras and algebras with involution are unital.

For the remainder of the paper, we focus on a specific   $K$-algebra  with involution.
Given $n\in\N$, let 
$A_n$ denote the unital $K$-algebra
$\nMat{K}{n}\times\nMat{K}{n} $
endowed with the involution  
\[(a,b)^*=(b^\trans, a^\trans).\]
Forms of $(A_n,*)$ are therefore   algebras with involution.

By the following proposition,
the forms of $(A_n,*)$ are precisely the degree-$n$
\emph{Azumaya algebras with unitary involution}.
Recall that  given a ring $R$, an $R$-algebra with an $R$-involution $(B,\tau)$
is said to be Azumaya with unitary involution if $B$ is an  Azumaya algebra over
its center $S$, $S$ is a quadratic \'etale $R$-algebra, and $\tau|_S$ coincides with the standard
$R$-involution of $S$ (equivalently, $\tau(s)=\Tr_{S/R}(s)-s$ for all $s\in S$). 
In this case, the rank of $B$ as an $S$-module is a perfect square $n^2$
and $n$ is called the degree of $B$ and denoted $\deg B$.
See
\cite[\S1.4]{First_2022_octagon} for further details.

\begin{prp}\label{PR:forms-of-A}
	Let $R$ be a $K$-ring and let $(B,\tau)$ be an $R$-algebra with an $R$-involution.
	Then $(B,\tau)$ is an $R$-form of $(A_n,*)$ if and only if $(B,\tau)$ is a degree-$n$ Azumaya
	algebra with a unitary involution over $R$. 
\end{prp}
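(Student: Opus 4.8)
The plan is to prove both implications of the equivalence, relying on the general theory of forms under faithfully flat descent. The key point is that $(A_n,*)$ itself is a degree-$n$ Azumaya algebra with unitary involution over $K$: its center is $K\times K$, which is a quadratic \'etale $K$-algebra, the involution $*$ swaps the two factors and hence restricts to the standard involution of $K\times K$, and each factor $\nMat{K}{n}$ is split Azumaya of degree $n$. Both classes of objects --- the $R$-forms of $(A_n,*)$ and the degree-$n$ Azumaya $R$-algebras with unitary involution --- are stable under faithfully flat base change and under the descent data that defines forms, so establishing the equivalence amounts to showing that the two notions agree fppf-locally and that each is fppf-locally isomorphic to $(A_n,*)_R$.

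For the direction that an $R$-form of $(A_n,*)$ is a degree-$n$ Azumaya algebra with unitary involution, I would argue as follows. If $(B,\tau)$ is a form, there is a faithfully flat $R$-ring $S$ with $(B,\tau)\otimes_R S\cong (A_n,*)_S=(\nMat{S}{n}\times\nMat{S}{n},*)$. The center, the property of being Azumaya over the center, the quadratic \'etale property of the center, and the compatibility of $\tau$ with the standard involution are all statements that may be checked after the faithfully flat base change $R\to S$ (using that formation of centers commutes with flat base change, that the Azumaya and \'etale properties descend, and that an involution restricting to the standard one is an fppf-local condition). Since all of these hold for $(A_n,*)_S$, they hold for $(B,\tau)$, and the degree is $n$ because the rank of $B$ over its center is $n^2$ after base change, hence before.

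For the converse, suppose $(B,\tau)$ is a degree-$n$ Azumaya $R$-algebra with unitary involution, with center $S$ quadratic \'etale over $R$. The plan is to exhibit a faithfully flat $R$-ring over which $(B,\tau)$ becomes isomorphic to $(A_n,*)$. First I would pass to a faithfully flat cover splitting the quadratic \'etale algebra $S$, so that after base change $S\cong R'\times R'$ and $\tau$ becomes the swap of the two factors; correspondingly $B$ decomposes as $B_1\times B_2$ with $B_2$ identified with $B_1^{\op}$ via $\tau$, where $B_1$ is Azumaya of degree $n$ over $R'$. Then I would further pass to an fppf cover splitting the Azumaya algebra $B_1$, so that $B_1\cong \nMat{}{n}$ over the resulting ring; matching $B_2\cong B_1^{\op}\cong \nMat{}{n}$ via the transpose isomorphism $\op\cong\mathrm{id}$ identifies $(B,\tau)$ with $(\nMat{}{n}\times\nMat{}{n},*)$, exhibiting $(B,\tau)$ as a form of $(A_n,*)$.

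The main obstacle I expect is the careful bookkeeping in the converse direction, specifically reconciling the involution $\tau$ with the swap isomorphism after splitting the center. One must check that, once $S$ is split and $B=B_1\times B_2$, the involution $\tau$ --- being $R$-linear and restricting to the swap on $S=R'\times R'$ --- necessarily interchanges $B_1$ and $B_2$ and induces an $R'$-algebra isomorphism $B_2\xrightarrow{\sim} B_1^{\op}$; this is what forces the second factor to be $B_1^{\op}$ rather than an unrelated Azumaya algebra. Getting the transpose identification $(\nMat{}{n})^{\op}\cong \nMat{}{n}$ to intertwine correctly with the swap involution $*$ of $A_n$, so that the resulting isomorphism is one of algebras \emph{with involution} and not merely of algebras, is the delicate step; everything else reduces to standard descent and splitting results for Azumaya algebras and quadratic \'etale algebras, for which I would cite \cite[III.8.5]{Knus_1991_quadratic_hermitian_forms} and \cite[\S1.4]{First_2022_octagon}.
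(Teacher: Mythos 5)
Your proposal is correct and follows essentially the same route as the paper: one direction by checking that all the defining properties (projectivity/separability, Azumaya over the center, quadratic \'etale center, standard involution on the center, degree) are fppf-local and hold for $(A_n,*)_S$, and the other by splitting the center (the paper does this canonically via $Z\otimes_R Z\cong Z\times Z$), invoking the decomposition $B\cong B_0\times B_0^{\op}$ with exchange involution (the paper cites \cite[Example~2.4]{First_2022_octagon} for the ``delicate step'' you flag), splitting the Azumaya factor, and concluding with the transpose identification $(x,y)\mapsto (x,(y^\trans)^{\op})$. The only refinements in the paper's version are the careful descent of separability and projectivity and the use of \cite[Lem.~1.4]{First_2022_octagon} to justify that formation of the center commutes with the faithfully flat base change.
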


\begin{proof}
	Suppose first that $(B,\tau)$ is a degree-$n$ Azumaya
	algebra with a unitary involution over $R$. 
	Observe that if $R'$ is a faithfully flat $R$-ring, then it is harmless to replace
	$(B,\tau)$ with $(B_{R'},\tau_{R'}):=(B\otimes_R {R'},\tau\otimes\id_{R'})$.
	By assumption, the center $Z=\Cent(B)$
	is quadratic \'etale over $R$ and $\tau|_Z$ is the standard $R$-involution of $Z$.
	Thus, $Z\otimes_R Z\cong Z\times Z$ as $Z$-rings
	\cite[III.4.1]{Knus_1991_quadratic_hermitian_forms}. We replace $(B,\tau)$ and $R$ with
	$(B_Z,\tau_Z)$ and $Z$. Now, $Z=R\times R$ and $\tau|_Z$ is the swap involution
	$(x,y)\mapsto (y,x)$. In this case, it is well-known 
	(\cite[Example~2.4]{First_2022_octagon}, for instance)	
	that 
	there is an $R$-algebra $B_0$ such that $B\cong B_0\times B_0^\op$,
	and under this isomorphism, $\tau$ is given by $\tau(x,y^\op)=(y,x^\op)$.
	We may therefore assume that $B=B_0\times B_0^\op$.
	Since $B$ is Azumaya of degree $n$ over $Z$, the $R$-algebra
	$B_0$ is Azumaya of degree $n$ over $R$ (because $B_0\cong B\otimes_Z R$ when
	we view $R$ as a $Z$-algebra via $(x,y)\mapsto x:Z\to R$). 
	Thus, there exists
	a faithfully flat $R$-ring $R'$ such that $(B_0)_{R'}\cong \nMat{R'}{n}$
	\cite[III.5.1.15]{Knus_1991_quadratic_hermitian_forms}. By replacing $(B,\tau)$
	with $(B_{R'},\tau_{R'})$, we   reduce  to the case
	$B = \nMat{R}{n}\times \nMat{R}{n}^\op$
	and $\tau(x,y^\op)=(y,x^\op)$.
	One now readily checks that $(x,y)\mapsto (x,(y^\trans)^\op)$
	is an isomorphism from $(A_{n,R},*_R)$ to $(B,\tau)$, so $(B,\tau)$ is an $R$-form of $(A_n,*)$.
	
	Conversely, suppose that $(B,\tau)$ is an $R$-form of $(A_n,*)$.
	Then there is a faithfully flat $R$-ring $R'$
	such that $(B_{R'},\tau_{R'})\cong (A_{n,R'},*_{R'})$.
	Since $A_{n,R'}$ is separable and projective over $R'$, the same holds
	for $B_{R'}$. This means that $B$ is separable and projective over $R$. (Indeed,
	$B$ is a finitely generated projective $R$-module
	by   \cite[III.2.1.3]{Knus_1991_quadratic_hermitian_forms},
	and it is separable over $R$ because the morphism $ \sum_i x_i\otimes y_i^\op\mapsto
	[b\mapsto x_iby_i]: B\otimes_R B^\op\to \End_R(B)$ becomes  an isomorphism
	after tensoring with $R'$, and is therefore an isomorphism.) 
	Consequently,
	$B$ is Azumaya over $Z:=\Cent(B)$. By \cite[Lem.~1.4]{First_2022_octagon},
	we have $Z_{R'}\cong \Cent(B_{R'})\cong R'\times R'$, so $Z$ is quadratic
	\'etale over $R$. Moreover, under the latter isomorphism, the restriction of $\tau_{R'}$
	to $Z_{R'}$ corresponds to the swap involution $(x,y)\mapsto (y,x)$, hence
	$\tau|_Z:Z\to Z$ is the standard $R$-involution of $Z$.
	It remains to check that the degree of $B$ (over $Z$)
	is $n$. By the isomorphism $B_{R'}\cong A_n\otimes_K R'$ we have   $\rank_{R} B=2n^2$,
	so $\rank_Z B=n^2$ and $\deg B=n$.
\end{proof}

\section{The Maximal Subalgebras of \texorpdfstring{$(A_n,*)$}{(An,*)}}
\label{sec:maximal-subalg}

Let $K$ and $(A_n,*)$ be as in Section~\ref{sec:Az-alg-with-inv} and suppose 
that $K$ is algebraically closed. Fixing $n$ throughout, we abbreviate $(A_n,*)$ to $A$.

Let $Z_r$ be the $K$-scheme
of $r$-tuples   $(a_1,\dots,a_r)\in A^r$ which do not generate $A$ (as a unital algebra with involution).
We would like to apply 
Proposition~\ref{PR:dim-Zr-detailed}  and Theorem~\ref{TH:dimZr-and-irred-comps}   
in order to find the dimension and irreducible components of $Z_r$.
To that end, we need to:
\begin{itemize}
    \item describe the  maximal (unital, $*$-invariant) subalgebras of $A$;
    \item describe (the $K$-points of) $G:=\Aut(A)$ and its identity connected component $G^\circ$;
    \item verify that the action of $G^\circ$ on the maximal subalgebras of $A$ has finitely many orbits with representatives $A_1,\dots,A_t$;
    \item compute the stabilizer $H_i$ of each   $A_i$ under the action of $G^\circ$;
    \item compute the number of generators of each   $A_i$.
\end{itemize}
The purpose of this section is to address these points. The conclusions for the structure of $Z_r$
are stated in the next sections.

\medskip

We set additional notation to be used throughout.

As the base field $K$ is fixed, we shall write $\M$ for $\M(K)$.
We write $\pi_i$ ($i=1,2$) for the  projection  from $A =\M\times \M$
to its  $i$-th factor.

Given a subspace $V\subseteq K^n$,
we let  $V^\perp=\{x\in K^n\suchthat \langle x,V\rangle=0\}$, where  $\Trings{\cdot,\cdot}$ is the standard  bilinear form $\langle x,y\rangle=x^\trans y$ on $K^n$.
We say that $V$ is nontrivial if $0\neq V\neq K^n$, and in this case,
define $S_V=\{a\in\M|aV\subseteq V\}$.

As usual,  $\GL_n$ is the algebraic group of invertible matrices in $\M$ (and also 
its $K$-points), and $\PGL $ is the quotient of $\GL_n$ by its center.
The image of $c\in \GL_n$ in $\PGL $ is denoted $[c]$.
The algebraic group of invertible matrices preserving the standard bilinear form
on $K^n$ is denoted $\Ob$; on the level of $K$-points,
$\Ob=\{a\in\GL_n|a^\trans a=I\}$.\footnote{
	The algebraic group 
	$\Ob$ coincides with the orthogonal group of a regular quadratic form
	if $\Char K\neq 2$, but this is not the case if $\Char K=2$.} 
When is $n$ even, the symplectic group of $n\times n$ matrices is $\Spn=\{a\in\GL_n|a^\trans\Omega a=\Omega\}$, where $\Omega=I_{n/2}\otimes\left(\begin{smallmatrix}0&-1\\1&0\end{smallmatrix}\right)$.
Given a nontrivial subspace $V\subseteq K^n$, we let
$S_V^\times=S_V\cap\GL_n$.
The images of $\Ob$, $\Spn$ and $S_V^\times$
under   the projection $\GL_n\rightarrow\PGL$ 
are denoted $\POb$,   $\PSp$ and $\PS_V^\times$,  respectively.

We say that $[c]\in\PGL$ is symmetric, resp.\ alternating,
if $c$ is; this is independent of the representative $c\in \GL_n$.
We also abbreviate $(c^\trans)^{-1}$ to $c^{-\trans}$.

\emph{Unless indicated otherwise, algebras are   algebras with involution,
and the terms ``subalgebra'' and ``generating set'' are to be understood in this context.}

\begin{prp}\label{PR:maximal-subalgebras}
The maximal subalgebras of $A$  are:
\begin{enumerate}[label=(\arabic*)]
    \item $A_V:=S_V\times S_{V^\perp}$, where $V$ is a nontrivial subspace of $K^n$, and
    \item $B_{[p]}:=\{(a,pap^{-1}) \where a\in\M\}$, where $[p]\in\PGL$ is symmetric or alternating.
\end{enumerate}
There are no repetitions in this list.
\end{prp}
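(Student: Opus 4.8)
The plan is to study an arbitrary (unital, $*$-invariant) subalgebra $C\subseteq A=\M\times\M$ through its two coordinate projections $\pi_1,\pi_2\colon A\to\M$. The first step is to record the constraint imposed by the involution: since $(a,b)^*=(b^\trans,a^\trans)$ and $C^*=C$, one gets $\pi_2(C)=\pi_1(C)^\trans$ and hence $C\subseteq \pi_1(C)\times\pi_1(C)^\trans$. I would also note once and for all that for \emph{any} subalgebra $D\subseteq\M$ the product $D\times D^\trans$ is a $*$-invariant subalgebra (using that $\trans$ is an anti-automorphism), together with the identity $S_V^\trans=S_{V^\perp}$, which follows from the equivalence $bV^\perp\subseteq V^\perp\iff b^\trans V\subseteq V$. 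This splits the analysis into two cases, according to whether $\pi_1(C)$ is a proper subalgebra of $\M$ or all of $\M$.

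In the first case, write $C_1=\pi_1(C)\subsetneq\M$. Then $C\subseteq C_1\times C_1^\trans\subsetneq A$, and since $C_1\times C_1^\trans$ is a proper $*$-invariant subalgebra, maximality of $C$ forces $C=C_1\times C_1^\trans$; moreover, if $C_1$ were not maximal in $\M$ one could enlarge it to a larger proper subalgebra and thereby enlarge $C_1\times C_1^\trans$ inside $A$, so $C_1$ is a maximal subalgebra of $\M$. The key input here is the classification of maximal subalgebras of $\M=\M_n(K)$ over the algebraically closed field $K$: by Burnside's theorem every \emph{proper} subalgebra acts reducibly on $K^n$, hence lies in some $S_V=\{a:aV\subseteq V\}$ with $V$ nonzero proper; and the $S_V$ are pairwise incomparable and maximal because the only $S_V$-invariant subspaces are $0,V,K^n$. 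Thus $C_1=S_V$ and $C=S_V\times S_{V^\perp}=A_V$.

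In the second case $\pi_1(C)=\M$, whence $\pi_2(C)=\M$ as well. A Goursat-style argument shows that $J=\{b:(0,b)\in C\}$ is a two-sided ideal of $\M$, so $J=0$ or $J=\M$; the latter would give $C=A$, so both projections restrict to isomorphisms and $C$ is the graph of a $K$-algebra automorphism of $\M$. By Skolem--Noether this automorphism is conjugation by some $p\in\GL_n$, i.e.\ $C=B_{[p]}$. A short computation gives $(B_{[p]})^*=B_{[p^\trans]}$, so $*$-invariance is equivalent to $[p]=[p^\trans]$, i.e.\ $p^\trans=\lambda p$ with necessarily $\lambda^2=1$; this is exactly the condition that $[p]$ be symmetric or alternating (with the understanding that when $\Char K=2$ the label ``alternating'' is subsumed by ``symmetric''). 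Maximality of $B_{[p]}$ is then immediate: any subalgebra strictly containing the graph has $\{b:(0,b)\in\cdot\}$ a nonzero ideal of $\M$, hence all of $\M$, forcing it to equal $A$.

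Finally I would dispose of the non-repetition claims: $A_V$ determines $S_V=\pi_1(A_V)$ and hence $V$; $B_{[p]}$ determines its conjugation automorphism and hence $[p]$ by Skolem--Noether; and $A_V\neq B_{[p]}$ because $\pi_1$ is proper on $A_V$ but onto on $B_{[p]}$. The degenerate case $n=1$ (where there is no nontrivial $V$ and $B_{[1]}$ is the diagonal) is checked directly. I expect the main obstacle to be the first case, namely pinning down the maximal subalgebras of $\M_n$ via Burnside and verifying the incomparability and maximality of the $S_V$, together with the bookkeeping that turns the condition $[p]=[p^\trans]$ into ``symmetric or alternating'' uniformly across all characteristics.
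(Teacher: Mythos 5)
Your overall route is the same as the paper's: split according to whether $\pi_1(C)$ is a proper subalgebra of $\M$ (Burnside) or all of $\M$ (Goursat/graph argument plus Skolem--Noether, then the transpose computation forcing $p^\trans=\pm p$), and your individual steps in those two cases are correct. But there is a genuine gap: you never prove that the algebras $A_V=S_V\times S_{V^\perp}$ are maximal. Your case analysis starts from a subalgebra $C$ that is \emph{assumed} maximal and shows it must appear in the list; together with your direct ideal argument for $B_{[p]}$, this gives the inclusion $\{\text{maximal subalgebras}\}\subseteq\{A_V\}\cup\{B_{[p]}\}$ and the maximality of the $B_{[p]}$'s, but the statement is a set equality, and the claim that each $A_V$ is itself maximal is left unproven. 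This does not follow formally from the maximality of $S_V$ in $\M$: a subalgebra $D$ with $A_V\subsetneq D\subsetneq A$ need not be of the form $D_1\times D_1^\trans$, so you cannot simply ``enlarge the factor'' as you do in Case 1 (where that product form was itself a consequence of the assumed maximality of $C$). One fix: since $(I,0),(0,I)\in A_V\subseteq D$, every $(a,b)\in D$ gives $(a,0)=(a,b)(I,0)\in D$ and $(0,b)\in D$, so $D=\pi_1(D)\times\pi_2(D)$ with $\pi_2(D)=\pi_1(D)^\trans$; then $\pi_1(D)\supseteq S_V$, and maximality of $S_V$ forces $D=A_V$ or $D=A$. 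Another fix: embed $D$ in a maximal subalgebra, which by your classification is some $A_W$ or $B_{[q]}$, and rule out $A_V\subsetneq A_W$ (your incomparability of the $S_V$'s) and $A_V\subseteq B_{[q]}$; note, however, that your non-repetition argument ($\pi_1$ proper versus onto) only excludes $B_{[q]}\subseteq A_V$, not $A_V\subseteq B_{[q]}$ --- for the latter use $(I,0)\in A_V\setminus B_{[q]}$ or $\dim A_V>n^2=\dim B_{[q]}$.

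For comparison, the paper organizes the proof so that this issue never arises: it shows (a) every \emph{proper} subalgebra, not just every maximal one, is contained in some listed algebra, and (b) no two listed algebras contain one another --- in particular it explicitly checks $A_V\nsubseteq B_{[p]}$ via the element $(I,0)$. From (a) and (b), maximality of every listed algebra is automatic. Your argument becomes a complete proof once you add one of the fixes above.
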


\begin{proof}
Let us first show that $A_V$ and $B_{[p]}$ are indeed subalgebras of $A$.
This is clear if we forget the involution, so it remains to check that they are
invariant under $*$.

Let $V$ be a nontrivial subspace of $K^n$. 
To show that $A_V$ is closed under the involution, it suffices to show that $(S_V)^\trans=S_{V^\perp}$. 
Indeed, 
$a\in S_V$ 
$\iff$
$ax\in V$  for all $x\in V$  
$\iff$
$\langle x , a^\trans y \rangle= \langle ax, y\rangle =0$ for all $x\in V$ and $y\in V^\perp$
$\iff$
$a^\trans y\in V^\perp$  for all $y\in V^\perp$  
$\iff$ 
$a^\trans\in S_{V^\perp}$.

Let $p\in\GL_n$ be symmetric or alternating. The subspace $B_{[p]}$ 
is closed under $*$ because, given $a\in\M$, we may define $b=p^{-1}a^\trans p=p^{-\trans}a^\trans p^\trans$ 
(because $p\in\{\pm p^\trans\}$)
and get
$(a,pap^{-1})^*=(p^{-\trans}a^\trans p^\trans,a^\trans)= (b,pbp^{-1})\in B_{[p]}$.

Next, let us show that no two of the subalgebras listed in (1) and (2) contain each other. If $V$, $W$ are distinct nontrivial subspaces of $K^n$, then elementary linear
algebra implies $S_V \nsubseteq S_W $ and so $A_V \nsubseteq A_W$. If $p,q\in\GL_n$ are 
symmetric or alternating, then $B_{[p]}\subseteq B_{[q]}$ if and only if $pap^{-1}=qaq^{-1}$ for all $a\in\M$  if only if $[p]=[q]$. If $V$ is a nontrivial subspace of $K^n$ and $p\in\GL_n$ is symmetric or alternating, then $A_V \nsubseteq B_{[p]}$ because $(I,0)\in A_V-B_{[p]}$, and $B_{[p]} \nsubseteq A_V$ because $\pi_1(B_{[p]})=\M\nsubseteq S_V=\pi_1(A_V)$.

To finish, we show that for any proper subalgebra $B\subseteq A$, there is a nontrivial subspace $V\subseteq K^n$ such that $B\subseteq A_V$, or a symmetric or alternating matrix $p\in\GL_n$ such that $B\subseteq B_{[p]}$.

Suppose first that $\pi_1(B)\neq \M$ or $\pi_2(B)\neq \M$. Since
$\pi_2(B)=\pi_2(B^*)=\pi_1(B)^\trans$, we have $\pi_1(B)\subsetneq \M$ in both cases. 
Then $\pi_1(B)$ is a proper    subalgebra of $\M$ (considered as a unital algebra without involution).
By Burnside's theorem (see \cite[Lem.~7.3]{Lam_1991_first_course}, for instance), 
there is a nontrivial subspace $V\subseteq K^n$ such that $\pi_1(B)\subseteq S_V$ and thus $B\subseteq S_V\times\M$. We have shown that $(S_V)^\trans=S_{V^\perp}$ and thus $B=B^*\subseteq \M^\trans\times (S_V)^\trans=\M\times S_{V^\perp}$. As a result, $B\subseteq (S_V\times\M)\cap (\M\times S_{V^\perp})= A_V$.

Suppose now that $\pi_1(B)=\pi_2(B)=\M$. We claim that in this case $\pi_2:B\rightarrow\M$ is an isomorphism of algebras without involution. For the sake of contradiction, suppose it is not. Since $\pi_2$ is surjective, its kernel is not trivial, i.e., there is $0\neq b\in\M$ such that $(b,0)\in B$. Let $a\in\M$. Since $a\in\pi_1(B)$, there is $c\in\M$ such that $(a,c)\in B$. Therefore, $(ab,0)=(a,c)(b,0)\in B$ and $(ba,0)=(b,0)(a,c)\in B$, which means that $\langle b\rangle\times0\subseteq B$, where $\langle b\rangle$ is the two-sided ideal of $\M$ generated by $b$. The ring $\M$ is simple and $b\neq 0$, so $\langle b\rangle=\M$  and    $\M\times 0\subseteq B$.
As a result, $0\times\M=(\M\times0)^*\subseteq B$, and it follows that $B=A$,  a contradiction. Therefore, $\pi_2$ is an isomorphism. This implies that $\dim B =n^2$, so $\pi_1:B\rightarrow\M$ is also an isomorphism.

Define $f=\pi_2\circ\pi_1^{-1}$. 
Then $f$ is an automorphism of $\M$ and
$B=\{(a,f(a))|a\in\M\}$. By the Skolem--Noether theorem, there exists $p\in\GL_n$ such that $f(a)=pap^{-1}$ for
all $a\in \M$. All we have left to prove is that $p$ is symmetric or alternating. Let $a\in\M$. We have $(a^\trans,pa^\trans p^{-1})\in B$ and thus also $(p^{-\trans}ap^\trans,a)=(a^\trans,pa^\trans p^{-1})^*\in B$, hence $a=p(p^{-\trans}ap^\trans)p^{-1}$, or rather, $p^\trans p^{-1}a=ap^\trans p^{-1}$. 
As this holds for every $a\in\M$, the matrix $p^\trans p^{-1}$ is scalar, i.e., there exists $\varepsilon\in K$ such that $p^\trans p^{-1}=\varepsilon I$, or equivalently, $p^\trans=\varepsilon p$. We have $p=p^{\trans\trans}=(\veps p)^\trans=\veps^2 p$,
so $\veps\in \{\pm1\}$.
If $\veps=1$ then $p$ is symmetric, and otherwise $\veps=-1$ and $\Char K\neq 2$, so $p$ is alternating.
\end{proof}

We proceed with describing $G=\Aut_K(A)$ and $G^\circ$.
We denote the nontrivial element of the permutation group $S_2$ by $\sigma$.

\begin{prp}\label{PR:aut-group}
The action of $\PGL$ on $A$ given by $[c](a,b)=(cac^{-1},c^{-\trans}bc^\trans)$ and the action of $S_2$ on $A$ given by $\sigma(a,b)=(b,a)$ give rise to isomorphisms $G\cong \PGL\rtimes S_2$ and $G^\circ\cong\PGL$.
Here, $S_2$ acts on $\PGL$ by $\sigma([c])=[c^{-\trans}]$.
\end{prp}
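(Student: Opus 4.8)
The plan is to package the two displayed rules into a homomorphism $\Phi\colon\PGL\rtimes S_2\to G$ and then show it is an isomorphism, after which the identity component drops out. First I would verify that each $[c]$ and $\sigma$ is genuinely a $*$-automorphism of $A$. The map $[c]$ is visibly an automorphism of the underlying algebra $\M\times\M$, and it commutes with $*$ by the one-line check $[c]((a,b)^*)=(cb^\trans c^{-1},c^{-\trans}a^\trans c^\trans)=([c](a,b))^*$; the swap $\sigma$ is likewise seen to respect $*$. A direct computation then gives the twisting relation $\sigma[c]\sigma^{-1}=[c^{-\trans}]$, since both sides send $(a,b)$ to $(c^{-\trans}ac^\trans,cbc^{-1})$. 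These facts assemble the two actions into a single homomorphism $\Phi$ out of the semidirect product.

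For injectivity I would note that $\Phi([c])=\id$ forces $c$ to be central, i.e.\ $[c]=[I]$, and that the classes of $\PGL$ and of $\PGL\,\sigma$ are told apart by their effect on the two primitive central idempotents $e_1=(I,0)$ and $e_2=(0,I)$: elements coming from $\PGL$ fix each of $e_1,e_2$, whereas those involving $\sigma$ interchange them. Hence $\ker\Phi$ is trivial.

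The heart of the proof is surjectivity. Given $\phi\in G$, it permutes $\{e_1,e_2\}$; because $*$ interchanges them ($e_1^*=e_2$) and $\phi$ commutes with $*$, the automorphism $\phi$ either fixes both idempotents or swaps both, and after replacing $\phi$ by $\sigma\circ\phi$ if necessary I may assume $\phi(e_1)=e_1$ and $\phi(e_2)=e_2$. Then $\phi$ carries the corner $e_1Ae_1\cong\M$ to itself, so the Skolem--Noether theorem --- exactly as in the proof of Proposition~\ref{PR:maximal-subalgebras} --- produces $c\in\GL_n$ with $\phi(a,0)=(cac^{-1},0)$ for all $a$. Commutation with $*$ then determines $\phi$ on the second factor with no further choices: $\phi(0,b)=\phi\big((b^\trans,0)^*\big)=\phi(b^\trans,0)^*=(0,c^{-\trans}bc^\trans)$. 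Adding the two contributions gives $\phi=[c]$ on all of $A$, so $\phi$ lies in the image of $\Phi$ and $\Phi$ is a bijective homomorphism, hence an isomorphism.

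Finally, for the identity component I would identify the image of $\PGL$ with the stabilizer $\{\phi\in G\where\phi(e_1)=e_1\}$, a closed subgroup of $G$. It is connected, being the image of the connected group $\GL_n$ under $c\mapsto[c]$, and it has index $2$ in $G$. A closed connected subgroup of finite index is automatically open, hence contains $G^\circ$, and being connected is contained in $G^\circ$; therefore it equals $G^\circ$, giving $G^\circ\cong\PGL$. The step demanding the most care is the surjectivity reduction: one must confirm that commutation with $*$ couples the second factor so rigidly to the first that the lone Skolem--Noether datum $c$ already fixes $\phi$ everywhere. Everything else is short matrix bookkeeping.
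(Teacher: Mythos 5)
Your proof is correct and follows essentially the same route as the paper: assemble the two actions into a homomorphism $\PGL\rtimes S_2\to G$ via the twisting relation, reduce surjectivity to the idempotent-fixing case by composing with $\sigma$, and invoke Skolem--Noether plus compatibility with $*$, finishing the identity-component claim by connectedness of $\PGL$. The only (harmless) variation is that you apply Skolem--Noether to the first factor alone and let $*$ rigidly determine the second factor, whereas the paper applies it to both factors to get $c,d$ and then deduces $[d]=[c^{-\trans}]$ from $*$-equivariance.
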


\begin{proof}
It is straightforward to check that the action of $\PGL$ is well-defined
and that $\PGL$ and $S_2$ act on $A$ by algebra automorphisms.
Moreover, one readily checks that $\sigma [c]\sigma^{-1}(a,b)=[c^{-\trans}](a,b)$ for all $(a,b)\in A$ and $c\in \GL_n$,
so the induced homomorphisms $\PGL\to G$, $S_2\to G$ glue into a homomorphism $\PGL\rtimes S_2\to G$,
which is clearly injective.
We need to show that it is also surjective.

Let $f\in G$. The center of $A$ is $K \times K $, so the only nontrivial central idempotents in $A$ are $(I,0)$ and $(0,I)$. Thus, $f$ either fixes both of them or exchanges them.

If $f$ fixes $(I,0)$ and $(0,I)$, then $f$ is a $(K\times K)$-algebra automorphism
of $A=\M \times \M$. This means that $f(a,b)=(g(a),h(b))$ for some   $g,h\in\Aut_K(\M)$, so by the Skolem--Noether theorem, there
are $c,d\in\GL_n$ such that $f(a,b)=(cac^{-1},dbd^{-1})$. 
The automorphism $f$ respects the involution $*$, so for every $a,b\in \M $,
we have 
\[c^{-\trans}a^\trans c^\trans=\pi_2((f(a,b))^*)=\pi_2(f((a,b)^*))=da^\trans d^{-1}.\] 
This means that $[d]=[c^{-\trans}]$ and so $f(a,b)=(cac^{-1},c^{-\trans}bc^\trans)=[c](a,b)$.

If $f(I,0)=(0,I)$, then $f \circ \sigma$   is an automorphism of $A$ that fixes $(I,0)$ and $(0,I)$. By what 
we have shown, there is $[c]\in \PGL$ such that $f\circ \sigma = [c]$, so
$f=[c]\sigma$. This shows that $G\cong \PGL\rtimes S_2$.

Since $\PGL$ is connected, the identity connected component of $\PGL\rtimes S_2$ is $\PGL$
(more precisely, its copy in  $\PGL\rtimes S_2$).
The isomorphism $\PGL\rtimes S_2\cong G$ therefore restricts to an isomorphism $\PGL\cong G^\circ$.
\end{proof}

Henceforth, we shall freely identify $G^\circ$ with $\PGL$
as in Proposition~\ref{PR:aut-group}.

\begin{lem}\label{LM:action}
	Let
	$[c]\in \PGL=G^\circ$, let  
$V$ be a nontrivial subspace of $K^n$ and  let $p\in \GL_n$ be symmetric or alternating.	
	Then, with notation as in Proposition~\ref{PR:maximal-subalgebras},
we have $[c]A_V=A_{cV}$ and  $[c]B_{[p]}=B_{[c^{-\trans}pc^{-1}]}$.
\end{lem}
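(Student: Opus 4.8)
The plan is to verify the two claimed identities $[c]A_V=A_{cV}$ and $[c]B_{[p]}=B_{[c^{-\trans}pc^{-1}]}$ by direct computation, using the explicit formula $[c](a,b)=(cac^{-1},c^{-\trans}bc^\trans)$ for the $G^\circ=\PGL$-action from Proposition~\ref{PR:aut-group}. Both identities are equalities of subalgebras of $A$, so it suffices to chase elements through the defining formulas; no deeper structural input is needed beyond elementary linear algebra.

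For the first identity, recall $A_V=S_V\times S_{V^\perp}$ and $A_{cV}=S_{cV}\times S_{(cV)^\perp}$. First I would show the first coordinates match. Applying $[c]$ to $A_V$ sends the first factor $S_V$ to $cS_Vc^{-1}$, so I must check $cS_Vc^{-1}=S_{cV}$: for $a\in\M$, we have $a\in S_V$ iff $aV\subseteq V$ iff $(cac^{-1})(cV)\subseteq cV$ iff $cac^{-1}\in S_{cV}$, using only that $c$ is invertible. For the second coordinate, $[c]$ sends $S_{V^\perp}$ to $c^{-\trans}S_{V^\perp}c^\trans$, and I must identify this with $S_{(cV)^\perp}$. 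The key linear-algebra fact here is $(cV)^\perp=c^{-\trans}(V^\perp)$, which follows from $\langle cx,y\rangle=\langle x,c^\trans y\rangle$; granting this, the same element-chase as above (with $c^{-\trans}$ in place of $c$) gives $c^{-\trans}S_{V^\perp}c^\trans=S_{c^{-\trans}V^\perp}=S_{(cV)^\perp}$. Combining the two coordinates yields $[c]A_V=A_{cV}$.

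For the second identity, recall $B_{[p]}=\{(a,pap^{-1})\where a\in\M\}$. Applying $[c]$ to a typical element gives $[c](a,pap^{-1})=(cac^{-1},\,c^{-\trans}pap^{-1}c^\trans)$. Writing $a'=cac^{-1}$, so that $a=c^{-1}a'c$, the second coordinate becomes $c^{-\trans}p(c^{-1}a'c)p^{-1}c^\trans=(c^{-\trans}pc^{-1})\,a'\,(c^{-\trans}pc^{-1})^{-1}$, since $(c^{-\trans}pc^{-1})^{-1}=cp^{-1}c^\trans$. Thus $[c](a,pap^{-1})=(a',qa'q^{-1})$ with $q=c^{-\trans}pc^{-1}$, which shows $[c]B_{[p]}\subseteq B_{[q]}$; as $a$ ranges over $\M$ so does $a'$, giving equality of sets. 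One should also remark that $q=c^{-\trans}pc^{-1}$ is again symmetric or alternating whenever $p$ is --- this is just $q^\trans=c^{-\trans}p^\trans c^{-1}=\pm q$, with the diagonal-zero condition in characteristic $2$ preserved by congruence --- so $B_{[q]}$ is a legitimate member of the list in Proposition~\ref{PR:maximal-subalgebras} and the notation $B_{[c^{-\trans}pc^{-1}]}$ is well-defined.

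I expect no serious obstacle: the entire statement is a bookkeeping computation with the action formula and the transpose-duality of $\perp$. The only points deserving care are the identity $(cV)^\perp=c^{-\trans}(V^\perp)$ in the first part and the correct tracking of the congruence $q=c^{-\trans}pc^{-1}$ (rather than, say, $c^\trans pc$) in the second, which is dictated precisely by the twist $b\mapsto c^{-\trans}bc^\trans$ in the second coordinate of the action.
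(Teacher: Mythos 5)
Your proof is correct, and it diverges from the paper's in the first identity. For $[c]A_V=A_{cV}$, the paper computes only the first coordinate, showing $\pi_1([c]A_V)\subseteq S_{cV}$, and then invokes the classification of Proposition~\ref{PR:maximal-subalgebras}: since $[c]A_V$ is a maximal subalgebra and $A_{cV}$ is the \emph{only} maximal subalgebra whose first projection lies in $S_{cV}$ (the $B_{[p]}$ all have $\pi_1=\M$, and $S_W\nsubseteq S_{cV}$ for $W\neq cV$), equality follows without ever touching the second coordinate. You instead verify both coordinates directly, which requires the duality identity $(cV)^\perp=c^{-\trans}(V^\perp)$; this makes your argument self-contained --- it does not use maximality of $A_V$ or the classification at all, and in fact proves $[c](S_V\times S_{V^\perp})=S_{cV}\times S_{(cV)^\perp}$ for an \emph{arbitrary} nontrivial subspace $V$ --- at the cost of the extra linear-algebra step the paper deliberately sidesteps. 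For the second identity $[c]B_{[p]}=B_{[c^{-\trans}pc^{-1}]}$, your substitution $a\mapsto c^{-1}ac$ is exactly the paper's computation. Your closing remark that congruence preserves the symmetric/alternating property (so that $B_{[c^{-\trans}pc^{-1}]}$ is a well-formed instance of the notation) is a sound addition; the paper leaves this implicit here and only uses it later, in Proposition~\ref{PR:representatives}, when sorting the $B_{[p]}$ into congruence classes.
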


\begin{proof}
If $a\in S_V$, then $(cac^{-1})cV=caV\subseteq cV$ and so $cac^{-1}\in S_{cV}$. 
This means that $\pi_1([c]A_V)= \{cac^{-1}\where a\in S_V\}\subseteq S_{cV}$. 
The only maximal subalgebra $B$ of $A$ that satisfies $\pi_1(B)\subseteq S_{cV}$ is $A_{cV}$, so 
we must have $[c]A_V=A_{cV}$. This proves the first assertion.
Next, $[c]B_{[p]}=\{(cac^{-1},c^{-\trans}pap^{-1}c^\trans)\,|\,a\in\M\}$, and by substituting $a$ with $c^{-1}ac$, we get $[c]B_{[p]}=\{(a,c^{-\trans}pc^{-1}acp^{-1}c^\trans)\,|\,a\in\M\}=B_{[c^{-\trans}pc^{-1}]}$.
\end{proof}

\begin{prp}\label{PR:representatives}
The action of $G^\circ$ on the maximal subalgebras of $A$ has finitely many orbits, and they are represented by the following subalgebras:
\begin{enumerate}[label=(\arabic*)]
    \item $A_{V(i)}$ for  $i\in\{1,\dots,n-1\}$, where $V(i)=  Ke_1+ \dots +K e_i$;
    \item $B_{[I]}$ and, if $n$ is even, also $B_{[\Omega]}$.
\end{enumerate}
Here, $e_1,\dots,e_n$ is the standard basis of $K^n$.
\end{prp}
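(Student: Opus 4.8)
The plan is to read off the orbits directly from the classification of maximal subalgebras in Proposition~\ref{PR:maximal-subalgebras} together with the action formulas of Lemma~\ref{LM:action}, treating the two families of maximal subalgebras separately. First I would observe that the two families $\{A_V\}$ and $\{B_{[p]}\}$ are each stable under $G^\circ=\PGL$: indeed Lemma~\ref{LM:action} gives $[c]A_V=A_{cV}$ and $[c]B_{[p]}=B_{[c^{-\trans}pc^{-1}]}$, and the congruence $p\mapsto c^{-\trans}pc^{-1}$ visibly preserves the property of being symmetric, respectively alternating. Since no $A_V$ coincides with any $B_{[p]}$ (the no-repetition clause of Proposition~\ref{PR:maximal-subalgebras}), the $G^\circ$-orbits may be enumerated within each family independently.

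For the family of type (1), the orbit of $A_V$ is $\{A_{cV}\where c\in\GL_n\}$. Since $\GL_n$ acts transitively on the $i$-dimensional subspaces of $K^n$ and $\dim(cV)=\dim V$, the orbit of $A_V$ consists precisely of those $A_W$ with $\dim W=\dim V$; as a nontrivial subspace has dimension in $\{1,\dots,n-1\}$, this produces exactly $n-1$ orbits, represented by $A_{V(i)}$ with $V(i)=Ke_1+\dots+Ke_i$.

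The family of type (2) carries the main content. Writing $g=c^{-\trans}$, the action $[p]\mapsto[c^{-\trans}pc^{-1}]$ becomes $[p]\mapsto[gpg^\trans]$, i.e.\ congruence up to a projective scalar. I would argue that this scalar ambiguity is harmless because $K$ is algebraically closed: if $gpg^\trans=\lambda p'$ with $\lambda\in\units{K}$, then writing $\lambda=\mu^2$ and replacing $g$ by $\mu^{-1}g$ yields honest congruence $p\sim p'$; conversely, congruent matrices clearly give subalgebras in a single orbit. Thus $B_{[p]}$ and $B_{[p']}$ are $G^\circ$-conjugate if and only if $p\sim p'$. Invoking the congruence classification recalled in Section~\ref{sec:conventions}, the invertible symmetric non-alternating matrices form the single class of $I$, and (when $n$ is even) the invertible alternating matrices form the single class of $\Omega$; since both congruence and multiplication by a scalar preserve the symmetric/alternating dichotomy, these two classes never merge. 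This yields the representatives $B_{[I]}$ and, for even $n$, $B_{[\Omega]}$, completing the count.

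The only delicate point is the interplay between the projective scalar inherent in $\PGL$ and the congruence classification, particularly in characteristic $2$, where ``alternating'' is a special case of ``symmetric''. The algebraic closedness of $K$ dissolves the scalar, since every unit is a square, and the classification statement quoted in Section~\ref{sec:conventions} is already phrased to hold in all characteristics, so no separate characteristic-$2$ argument is needed; the symmetric-not-alternating and alternating classes remain distinct because $I$ is not alternating while $\Omega$ is.
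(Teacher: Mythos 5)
Your proposal is correct and follows essentially the same route as the paper's proof: Lemma~\ref{LM:action} plus transitivity of $\GL_n$ on subspaces of a fixed dimension for the family $\{A_V\}$, and the congruence classification of symmetric non-alternating versus alternating invertible matrices for the family $\{B_{[p]}\}$. The one point you elaborate beyond the paper's proof of this proposition --- absorbing the projective scalar into the congruence via a square root, valid since $K$ is algebraically closed --- is left implicit there, though the paper spells out exactly this argument later in the proof of Proposition~\ref{PR:stabilizers}.
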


\begin{proof}
Let $V\subseteq K^n$ be a nontrivial subspace of dimension $i$. 
By Lemma~\ref{LM:action}, the $G^\circ$-orbit of $A_V$ is $\{A_{cV}\,|\, c\in \GL_n\}=\{A_W\,:\,\dim W=i\}$.
This means that $A_V$ is in the $G^\circ$-orbit of $A_{V(i)}$ and not in any other $G^\circ$-orbit.

Let $p\in\GL_n$ be symmetric or alternating.
Since $[c]B_{[p]}=B_{[c^{-\trans}pc^{-1}]}$, the $G^\circ$-orbit of $B_{[p]}$
consists of those $B_{[q]}$ with $q$ congruent to $p$. 
We noted in Section~\ref{sec:conventions} that $p\sim I$
if $p$ is  symmetric and not  alternating, $p\sim\Omega$ if $p$ is alternating,
and $I\nsim \Omega$.
Thus, $B_{[p]}$ is  in the $G^\circ$-orbit of
$B_{[I]}$, or that of $B_{[\Omega]}$ (provided $n$ is even), and these two orbits are different.
\end{proof}

\begin{prp}\label{PR:stabilizers}
The stabilizers of the subalgebras from Proposition~\ref{PR:representatives}
under the action of $G^\circ=\PGL$ are given as follows:
 \begin{enumerate}[label=(\arabic*)]
    \item $\Stab_{G^\circ}(A_{V(i)})= \PS_{V(i)}^\times$ for $1\le i\le n-1$;
    \item $\Stab_{G^\circ}(B_{[I]})=\POb$ and, if $n$ is even, $\Stab_{G^\circ}(B_{[\Omega]})=\PSp$.
\end{enumerate}
\end{prp}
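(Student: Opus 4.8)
The plan is to reduce both computations to the orbit formulas of Lemma~\ref{LM:action} together with the no-repetition clause of Proposition~\ref{PR:maximal-subalgebras}, and then to translate the resulting matrix equations into the defining relations of the relevant classical groups.

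For part (1), I would start from $[c]A_{V(i)}=A_{cV(i)}$ (Lemma~\ref{LM:action}). Since the assignment $V\mapsto A_V$ is injective on nontrivial subspaces, an element $[c]\in\PGL$ stabilizes $A_{V(i)}$ precisely when $cV(i)=V(i)$; for invertible $c$ this is equivalent, by comparing dimensions, to the inclusion $cV(i)\subseteq V(i)$, i.e.\ to $c\in S_{V(i)}^\times=S_{V(i)}\cap\GL_n$. Passing to $\PGL$ then yields $\Stab_{G^\circ}(A_{V(i)})=\PS_{V(i)}^\times$. I expect this part to be essentially immediate.

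For part (2), Lemma~\ref{LM:action} gives $[c]B_{[p]}=B_{[c^{-\trans}pc^{-1}]}$, and the no-repetition clause of Proposition~\ref{PR:maximal-subalgebras} says $B_{[q]}=B_{[p]}$ if and only if $[q]=[p]$. Hence $[c]$ stabilizes $B_{[p]}$ exactly when $c^{-\trans}pc^{-1}=\lambda p$ for some $\lambda\in K^\times$; setting $d=c^{-1}$, this reads $d^\trans p\,d=\lambda p$, i.e.\ $d$ is an orthogonal (for $p=I$) or symplectic (for $p=\Omega$) similitude. The crucial step is to pass from similitudes to honest isometries: using that $K$ is algebraically closed, I would choose $\mu$ with $\mu^2=\lambda$, so that $d/\mu$ lies in $\Ob$ (resp.\ $\Spn$), whence $[c]=[d]^{-1}\in\POb$ (resp.\ $\PSp$). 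The reverse inclusion is a short direct check: for $a\in\Ob$ one has $a^{-\trans}a^{-1}=I$, and for $a\in\Spn$ one has $a^{-\trans}\Omega a^{-1}=\Omega$, so in either case $[a]$ fixes the corresponding $B_{[p]}$.

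The one thing to watch is the scalar ambiguity inherent in working in $\PGL$ for part (2): the stabilizer naturally lands in the projective \emph{similitude} groups, and it is exactly this feature that produces $\POb$ and $\PSp$ rather than their non-projective isometry counterparts. The passage between similitudes and isometries is harmless only because $K$ is algebraically closed, allowing one to extract $\sqrt{\lambda}$; I would flag this as the sole genuinely substantive, though still elementary, point of the argument.
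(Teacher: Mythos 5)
Your proposal is correct and follows essentially the same route as the paper: both parts reduce to Lemma~\ref{LM:action} together with the fact that $V\mapsto A_V$ and $[p]\mapsto B_{[p]}$ are injective, and part (2) hinges on the same key step of extracting $\sqrt{\lambda}$ (valid since $K$ is algebraically closed) to rescale a similitude into an isometry — the paper rescales $c$ directly after rewriting $[c^{-\trans}pc^{-1}]=[p]$ as $[c^\trans p c]=[p]$, whereas you rescale $d=c^{-1}$, a purely cosmetic difference.
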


\begin{proof}
Let $1\le i\le n-1$. 
By Lemma~\ref{LM:action},
$\Stab(A_{V(i)})=\{[c]\in\PGL \suchthat cV(i) =V(i)\}=\PS_{V(i)}^\times$,
so (1) holds.

To prove (2), it suffices to show that if $p\in\{I,\Omega\}$, then $\Stab(B_{[p]})=\{[c]\where c\in \GL_n,\, c^\trans pc=p\}$. By Lemma~\ref{LM:action},
$\Stab(B_{[p]})=
\{[c]\in\PGL\suchthat [c^{-\trans}pc^{-1}]=[p]\}
=\{[c]\in\PGL\suchthat [c^\trans pc]=[p]\}$.
It is now clear that $\{[c]\where c\in \GL_n,\, c^\trans pc=p\}\subseteq \Stab(B_{[p]})$. The converse  holds because if $[c^\trans pc]=[p]$, then $\lambda c^\trans pc=p$ for some $\lambda\in K^\times$, so upon setting $d=\sqrt{\lambda}c$, we have $[c]=[d]$ and $d^\trans pd=p$.
\end{proof}

The remainder of this section is dedicated to determining the number of generators for the algebras considered
in Proposition~\ref{PR:representatives}.

\begin{lem}\label{LM:identities}
	Let $K$ be any field, $n\in\N$ and $i,j\in\{1,\dots,n\}$.
	As usual,  let $e_{i,j}\in\nMat{K}{n}$ be the matrix with $1$ in the $(i,j)$-entry and $0$ elsewhere. Define $u=\sum_{i=1}^{n-1} e_{i,i+1}$ and let $d_{i,j}$ be $e_{i,i}$ if $i=j$ and $e_{i,j}+e_{j,i}$ otherwise.  Then:
	\begin{enumerate}[label=(\roman*)]
		\item $u^ke_{i,j}u^\ell= \left\{\begin{array}{ll}
e_{i-k,j+\ell} & \text{$k<i$ and $\ell\leq n-j$} \\
0 &  \text{otherwise}
\end{array}
\right.$ for all integers $k,\ell\geq 0$.
		\item $d_{i,j}^\ell=d_{i,j}$ for every odd $\ell\in\N$.
		\item $d_{i,j}u^{j-i}d_{i,j}u^{j-i}d_{i,j}=e_{j,i}$, provided $i<j$.
	\end{enumerate}
\end{lem}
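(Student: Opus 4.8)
The plan is to verify each of the three identities by direct matrix computation, treating the statement purely as a set of bookkeeping facts about the matrix units $e_{i,j}$ and the nilpotent shift matrix $u=\sum_{i=1}^{n-1}e_{i,i+1}$. Since everything here reduces to the single multiplication rule $e_{i,j}e_{k,\ell}=\delta_{jk}e_{i,\ell}$, no deep structural input is needed; the work is to organize the indices carefully and guard against the boundary cases where a product falls outside the matrix.

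For part (i), I would first establish the action of $u$ on matrix units on each side. Writing $u=\sum_{s=1}^{n-1}e_{s,s+1}$, the product $u\,e_{i,j}$ picks out the term with $s+1=i$, giving $e_{i-1,j}$ when $i\geq 2$ and $0$ when $i=1$; symmetrically $e_{i,j}\,u=e_{i,j+1}$ when $j\leq n-1$ and $0$ when $j=n$. Iterating $k$ times on the left and $\ell$ times on the right then yields $u^k e_{i,j}u^\ell=e_{i-k,j+\ell}$ precisely when the row index stays positive throughout (that is, $i-k\geq 1$, i.e. $k<i$) and the column index stays at most $n$ (that is, $j+\ell\leq n$, i.e. $\ell\leq n-j$), and gives $0$ as soon as either bound is violated, since one application of $u$ annihilates $e_{1,j}$ on the left or $e_{i,n}$ on the right. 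A clean way to present this is a short induction on $k+\ell$.

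Part (ii) is the easiest: when $i=j$, $d_{i,i}=e_{i,i}$ is idempotent, so $d_{i,i}^\ell=d_{i,i}$ for all $\ell\geq 1$; when $i\neq j$, one computes $d_{i,j}^2=(e_{i,j}+e_{j,i})^2=e_{i,i}+e_{j,j}$, whence $d_{i,j}^3=d_{i,j}(e_{i,i}+e_{j,j})=d_{i,j}$, and the odd powers cycle, giving $d_{i,j}^\ell=d_{i,j}$ for odd $\ell$. For part (iii), with $i<j$ I would use part (i) to identify the middle factor: $u^{j-i}$ shifts appropriately, and since $i<j$ the constraints in (i) are met, so $d_{i,j}u^{j-i}d_{i,j}$ can be expanded term by term using $d_{i,j}=e_{i,j}+e_{j,i}$ and the shift rule, then multiplied once more by $d_{i,j}$. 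This is the step most likely to require care: each $d_{i,j}$ is a sum of two matrix units, so the expansion produces several terms, and the point is that after applying the multiplication rule all terms either vanish (because an inner index fails to match, or because a shift pushes an index out of range) except for the single surviving contribution $e_{j,i}$. The main obstacle is thus purely combinatorial—keeping track of which of the cross terms survive—rather than conceptual, and I would handle it by computing $u^{j-i}d_{i,j}$ and $d_{i,j}u^{j-i}$ explicitly first, then assembling the triple product.
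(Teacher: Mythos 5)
Your proposal is correct and takes essentially the same route as the paper: part (i) via the single-step rules $ue_{i,j}=e_{i-1,j}$ (or $0$) and $e_{i,j}u=e_{i,j+1}$ (or $0$) plus induction, part (ii) via $d_{i,j}^3=d_{i,j}$, and part (iii) by expanding the triple product term by term using (i) and the matrix-unit multiplication rule. The paper organizes (iii) by first computing $u^{j-i}d_{i,j}u^{j-i}=e_{i,j}+e_{2i-j,2j-i}$ (with the second term present only when the shifts stay in range, handled by a short case split), exactly the bookkeeping of surviving versus out-of-range cross terms that you anticipate.
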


\begin{proof}
	(i)  Note that $ue_{i,j}$ is $e_{i-1,j}$ if $1<i$ and $0$ otherwise, and $e_{i,j}u$ is $e_{i,j+1}$ if $j<n$ and $0$ otherwise. The identity now follows by induction.
	
	(ii) This holds because $d_{i,j}^3=d_{i,j}$, which is seen by computation.
	
	(iii) Suppose first that $j-i<i$ and $j\le n-(j-i)$.
	By (i), 
	\begin{align*}
	d_{i,j}u^{j-i}d_{i,j}u^{j-i}d_{i,j}
	&=
	d_{i,j}u^{j-i}(e_{j,i}+e_{i,j})u^{j-i}d_{i,j}
	=
	d_{i,j}(e_{i,j}+e_{2i-j,2j-i})d_{i,j}
	\\
	&=
	d_{i,j}e_{i,j}d_{i,j}+d_{i,j}e_{2i-j,2j-i}d_{i,j}
	=
	e_{j,i}.
	\end{align*}
	The other case is handled similarly, except the factor $e_{2i-j,2j-i}$ does not occur.
\end{proof}

We will only need a special case of the following result, but we prove it in full generality
because it may be of general interest.

\begin{prp}\label{PR:central-simple}
Let $K$ be any field (possibly finite) and let $(B,\tau)$ be a degree-$n$ central simple algebra with involution over $K$.\footnote{
	Our definition of a central simple algebra with involution is as in \cite{Knus_1998_book_of_involutions}. Thus, in the unitary case, the center of $B$ is 
	a quadratic \'etale $K$-algebra and the restriction of $\tau$ to the center is its
	unique nontrivial $K$-automorphism.
} Then $\gen_K(B,\tau)=1$, unless $n=2$ and the involution $\tau$ is symplectic, in which case $\gen_K(B,\tau)=2$, or $n=1$ and $\tau$ is orthogonal,  where $\gen_K(B,\tau)=0$.
\end{prp}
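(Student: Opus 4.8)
The plan is to separate the easy exceptional cases from the main claim $\gen_K(B,\tau)=1$, and to split the latter according to whether $K$ is infinite (where a density argument applies) or finite (where one must exhibit generators by hand). Throughout I use that the $\tau$-subalgebra generated by a single $x$ is exactly the ordinary subalgebra generated by $\{x,\tau(x)\}$, since that set is $\tau$-stable and is contained in any $\tau$-stable subalgebra through $x$.

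First I would dispose of the exceptional and low-degree cases directly. If $n=1$ and $\tau$ is orthogonal then $B=K$ and the empty set generates, so $\gen_K=0$; if $n=1$ and $\tau$ is unitary then $B$ is quadratic \'etale over $K$ and any element outside $K$ generates it together with its conjugate, giving $\gen_K=1$ (and not $0$, as $B\neq K$); symplectic involutions require even degree, so nothing is checked for $n=1$. For $n=2$ symplectic, $\tau$ is the canonical (quaternion) involution, so $x+\tau(x)=\Trd(x)$ and $x\tau(x)=\Nrd(x)$ are scalars for every $x$; hence the $\tau$-subalgebra generated by a single $x$ equals $K+Kx$, which is at most $2$-dimensional, forcing $\gen_K\geq 2$, while two standard generators give $\gen_K\leq 2$.

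For the remaining cases I must produce a single $x\in B$ with $x,\tau(x)$ generating $B$. The key reduction is that generation is insensitive to base change: if $x\in B$ and $x,\tau(x)$ generate $B_L$ over some extension $L$, they already generate $B$ over $K$, since the generated subalgebra $C$ satisfies $\dim_K C=\dim_L C_L=\dim_K B$. Over the separable closure $K_{\mathrm{sep}}$, every such $(B,\tau)$ becomes a split model: $(\M,\trans)$ in the orthogonal case, $(\M, x\mapsto \Omega^{-1}x^\trans\Omega)$ in the symplectic case, and $(\M\times\M,(a,b)\mapsto(b^\trans,a^\trans))$ in the unitary case. For each I would exhibit one explicit generator. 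For the orthogonal transpose, $x=u$ (notation of Lemma~\ref{LM:identities}) works: $\tau(x)=u^\trans$, $u^\trans u=I-e_{1,1}$ yields $e_{1,1}$, and then $(u^\trans)^{i-1}e_{1,1}u^{j-1}=e_{i,j}$ recovers every matrix unit via Lemma~\ref{LM:identities}(i). For the symplectic and unitary models one writes down an analogous element, combining $u$ with a symmetric seed $d_{i,j}$ (respectively an asymmetric pair of blocks in the two factors), and verifies generation by the same lemma. This shows that the locus $U=\{x\in B : x,\tau(x)\text{ generate }B\}$, whose complement is closed (a $W_{\leq m}$-type locus exactly as in the proof of Proposition~\ref{PR:Grassmannian-test}), is a nonempty open subscheme of the affine space underlying $B$.

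At this point the two subcases diverge. When $K$ is infinite, $U$ is a nonempty open subset of affine space over an infinite field and hence has a $K$-point, giving the generator; this settles all infinite fields uniformly, independent of characteristic, non-splitness, or the type of $\tau$. When $K$ is finite, density fails, but then $B$ is automatically split by Wedderburn, so $(B,\tau)$ is one of finitely many explicit models over $K$ itself, and I would produce generators in each by the same Lemma~\ref{LM:identities} calculations, which hold over any field. \textbf{The main obstacle} is precisely this finite-field step: unlike the density argument, it demands honest field-independent generators for the less symmetric models --- the adjoint involution of a non-split symmetric form (for $n$ even and $q$ odd there may be two orthogonal involutions up to conjugacy) and the unitary involution with field center $\F_{q^2}$, where a naive choice such as $x=u$ fails to generate the center and the element must be twisted by a scalar in $\F_{q^2}\setminus\F_q$. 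Checking that a single twisted element recovers both all matrix units and the central scalars is the most delicate computation, and is where Lemma~\ref{LM:identities} does its real work.
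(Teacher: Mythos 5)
Your proposal follows essentially the same route as the paper's proof: dispose of the exceptional cases directly ($n=1$; for $n=2$ symplectic use $x+\tau(x)=\Trd_{B/K}(x)$ plus Cayley--Hamilton to get the lower bound $2$), exhibit one-element generators for the split models via the matrices of Lemma~\ref{LM:identities}, deduce the infinite-field case by a density argument on the open locus of generators (the paper cites \cite[Prop.~4.1]{First_2017_number_of_generators} for exactly this step), and for finite fields invoke Wedderburn and produce explicit generators model by model, including the twisted element $\alpha I+u$ with $\alpha\in\F_{q^2}\setminus\F_q$ for the unitary case with field center. This matches the paper's Cases I--III and its finite-field case analysis almost verbatim; the computations you defer (symplectic and unitary split models, adjoint involutions of arbitrary diagonal symmetric forms) are precisely what the paper's Cases I--III carry out, and the seeds you name do work.

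One correction is needed: you pass to $K_{\mathrm{sep}}$ and claim every $(B,\tau)$ splits there. In characteristic $2$ this can fail for orthogonal involutions: a separably closed field need not be perfect (e.g.\ $\F_2(t)^{\mathrm{sep}}$, where $t$ has no square root), and over such a field not every symmetric non-alternating form is similar to $I$ --- the determinant modulo squares obstructs, e.g.\ $\langle 1,t\rangle$ is not similar to $\langle 1,1\rangle$ --- so not every orthogonal involution on $\M$ becomes transposition over $K_{\mathrm{sep}}$. The repair is immediate: run the splitting step over the algebraic closure $\overline{K}$, where every element is a square. Nonemptiness of your open locus $U$ is detected equally well by an $\overline{K}$-point (formation of the non-generating locus commutes with base change, as recalled in Section~\ref{sec:non-generators}), and the density of $K$-points of affine space for infinite $K$ is unaffected. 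This is in effect what the paper does, since its notion of form allows an arbitrary faithfully flat extension rather than the separable closure. With that change your argument is correct and coincides with the paper's.
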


\begin{proof}

Contrary to our convention elsewhere in this section, we will   consider
algebras without involution. When discussing an algebra with involution, the involution
will be written  explicitly, e.g., $(B,\tau)$. 
We will use the matrices introduced in Lemma~\ref{LM:identities}.

We   first prove that $\gen_K(B,\tau)\leq 1$ in three special cases.

\smallskip

{\it Case I. $B=\M$ and $\tau(a)=d^{-1} a^\trans d$ 
with $d=\mathrm{diag}(\lambda_1,\dots,\lambda_n)\in\GL_n$.}
By Lemma~\ref{LM:identities}(i), $e_{i,j}=u^{n-i}e_{n,1}u^{j-1}$ for
all $i,j\in\{1,\dots,n\}$, and so $u$ and $e_{n,1}$ generate $\M$. Note that $u^{n-1}=e_{1,n}$ and therefore $\frac{\lambda_n}{\lambda_1}\tau(u^{n-1})=e_{n,1}$. 
This shows that $u$ generates $(B,\tau)$.

\smallskip

{\it Case II. $B=\M$, $\tau(a)=\Omega a^\trans\Omega^{-1}$ and $n>2$ is even.}
Again, we will show that $u$ generates $(B,\tau)$. 
Indeed, using Lemma~\ref{LM:identities}(i),
\begin{align*}
\tau(u^{n-3}\tau(u^{n-1})u^{n-3})
&=
\tau(u^{n-3}\tau(e_{1,n})u^{n-3})
=
\tau(u^{n-3}(-e_{n-1,2})u^{n-3})
\\
&=\tau(-e_{2,n-1})=e_{n,1}.
\end{align*}
We already observed that $u$ and $e_{n,1}$ generate $\M$, so this case is done.

\smallskip

{\it Case III. $(B,\tau)=(A_n,*)$.}
We claim that $(u,d_{1,n})$ generates $(B,\tau)$. Let $\ell \ge n$ be odd. Since $u^\ell=0$, we have $(u,d_{1,n})^\ell=(0,d_{1,n})$ (Lemma~\ref{LM:identities}(ii)). 
This means that both $(u,0)$ and $(0,d_{1,n})^*=(d_{1,n},0)$ are in the subalgebra of $(A_n,*)$ generated by $(u,d_{1,n})$. By Lemma~\ref{LM:identities}(iii),  $(e_{n,1},0)$ is also in that subalgebra, so   it contains $\M\times0$. It must also contain $0\times\M=(\M\times0)^*$,
and we conclude that $(u,d_{1,n})$ generates $(A_n,*)$. 

\smallskip

Returning to the general case, we now  show that $\gen (B,\tau)\leq 1$ unless
$n=2$ and $\tau$ is symplectic.
Since $\tau$ is orthogonal, symplectic, or unitary, $B$ is a  $K$-form of the algebras with involution
considered in Cases I, II and III, respectively (the unitary case is a consequence of Proposition~\ref{PR:forms-of-A}). If $K$ is infinite, then this implies the proposition by \cite[Prop.~4.1]{First_2017_number_of_generators}. 

Suppose now that $K$ is finite. 
By Wedderburn's theorem, every central simple $K$-algebra is isomorphic to a matrix algebra over $K$.
Thus, if $\tau$ is of the first kind, $B\cong \nMat{K}{n}$ and otherwise,
either $B\cong \nMat{K}{n}\times\nMat{K}{n}$, or $B\cong \nMat{L}{n}$, where $L$ is the unique quadratic extension of $K$.   
If $\tau$ is orthogonal, then it is the adjoint involution of a symmetric diagonalizable bilinear form \cite[\S2]{Knus_1998_book_of_involutions}. This case is therefore covered (up to isomorphism) by Case I (the involution in this case is adjoint to the diagonal bilinear form $b(x,y)=\sum_{i=1}^n\lambda_i x_iy_i$ on $K^n$).
Similarly, the case where $\tau$ is symplectic and $n>2$ is covered by Case II.
In the case where $\tau$ is unitary and $B\cong \nMat{K}{n}\times\nMat{K}{n}$,
we have $(B,\tau)\cong (A_n,*)$ (e.g., see \cite[Prp.~2.14]{Knus_1998_book_of_involutions}), 
so we are done by Case III.
It remains to consider the case where $\tau$ is unitary and $B\cong \nMat{L}{n}$.
In this case,   $(B,\tau)\cong(\nMat{L}{n},\sigma)$,
where $\sigma$ acts on a matrix   by transposing   and then applying the   nontrivial automorphism of $L/K$ to each entry (see \cite[Chp.~10, Example~1.6(i)]{Scharlau_1985_quadratic_and_hermitian_forms} and also p.~302 in this source). Let $\alpha\in L-K$. We claim that $\alpha I+u$ generates $(\nMat{L}{n},\sigma)$. Indeed, let $m$ be a power of $|L|$ greater than $n$. 
Since $\alpha I$ and $u$ commute and $m$ is a power of $|L|$, and in particular of $\Char L$, 
we have $(\alpha I+u)^m=\alpha^m I+u^m=\alpha I$. Thus, $\alpha I$ and $u$ are in the $K$-subalgebra of $\nMat{L}{n}$ generated by $\alpha I+u$. As $u$ generates $(\nMat{K}{n},\trans)$ (Case~I) and $\alpha$ generates $L$, the element $\alpha I+u$ generates $(\nMat{L}{n},\sigma)$. This completes
the proof that $\gen_K(B,\tau)\leq 1$.

Notice that the (unital, involutive) subalgebra of $(B,\tau)$
generated by the empty set is $K$,
so if $B\neq K$, we must have $\gen_K(B)>0$. Together with what we showed, this proves
the proposition in all cases except when $n=2$ and $\tau$ is symplectic.

To finish, suppose $n=2$ and $\tau$ is symplectic.
Then for all $b\in B$, we have
$b+\tau(b)\in K$  (\cite[Prop.~2.6]{Knus_1998_book_of_involutions})
and $b^2=\Trd_{B/K}(b)b-\Nrd_{B/K}(b)$ (by the Cayley--Hamilton theorem),
hence the subalgebra of $(B,\tau)$ generated by $b$ is contained in $K+Kb$.
This means that $\gen_K(B,\tau)>1$. On the other hand,  
$\gen_K(B,\tau)\leq \gen_K(B)\leq 2$ because $B$ is a central simple
$K$-algebra.
\end{proof}

\begin{prp}\label{PR:subalgebra-generators}
Let $B$ be a maximal subalgebra of $A$. 
Then $\gen(B)\leq 1$ unless $n=2$ and $B=B_{[\Omega]}$ (notation
as in Proposition~\ref{PR:representatives}), in which case
$\gen(B_{[\Omega]})=2$.
\end{prp}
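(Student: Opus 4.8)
The plan is to run through the two families of maximal subalgebras from Proposition~\ref{PR:maximal-subalgebras} and, in each case, identify $B$ with a central simple algebra with involution so that Proposition~\ref{PR:central-simple} applies. First I would treat the subalgebras $B_{[p]}$ of type (2). Since $B_{[p]}=\{(a,pap^{-1})\where a\in\M\}$ and the projection $\pi_1\colon B_{[p]}\to\M$ is a bijection identifying $B_{[p]}$ with $\M=\nMat{K}{n}$, I would transport the involution $*$ through $\pi_1$. A direct computation (essentially the one already carried out in the proof of Proposition~\ref{PR:maximal-subalgebras}) shows that under this identification the involution becomes $a\mapsto p^{-1}a^\trans p$, which is the adjoint involution of the bilinear form with Gram matrix $p$. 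Thus $(B_{[p]},*)\cong(\nMat{K}{n},\ \mathrm{adjoint\ of\ }p)$, a degree-$n$ central simple algebra with involution of the first kind, orthogonal when $p$ is symmetric (i.e.\ $[p]=[I]$) and symplectic when $p$ is alternating (i.e.\ $[p]=[\Omega]$). Proposition~\ref{PR:central-simple} then gives $\gen(B_{[I]})=1$ for all $n$ (the orthogonal case, with $n\geq 2$ here so the $n=1$ exception does not arise), and $\gen(B_{[\Omega]})=1$ for even $n>2$ but $\gen(B_{[\Omega]})=2$ exactly when $n=2$ and $\tau$ is symplectic. This is precisely the stated exception.

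Next I would handle the subalgebras $A_{V(i)}=S_{V(i)}\times S_{V(i)^\perp}$ of type (1), for $1\le i\le n-1$. These are \emph{not} central simple (they have a nontrivial center and are not simple as rings), so Proposition~\ref{PR:central-simple} does not apply directly; instead I would produce an explicit single generator. By Proposition~\ref{PR:representatives} it suffices to treat the representatives $A_{V(i)}$ with $V(i)=Ke_1+\dots+Ke_i$, for which $S_{V(i)}$ is the block upper-triangular algebra $\left\{\left[\begin{smallmatrix} *&*\\ 0&*\end{smallmatrix}\right]\right\}$ (blocks of sizes $i$ and $n-i$) and $S_{V(i)^\perp}$ is the corresponding block lower-triangular algebra, since $V(i)^\perp=Ke_{i+1}+\dots+Ke_n$. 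The strategy is to pick an element $(a,b)\in A_{V(i)}$ and show the subalgebra it generates (as a unital $*$-algebra) is all of $A_{V(i)}$. I would take $a\in S_{V(i)}$ to be a single matrix that generates $S_{V(i)}$ as a unital algebra without involution; a natural candidate is built from the shift-type matrix $u$ of Lemma~\ref{LM:identities} together with a diagonal perturbation making the two diagonal blocks generate the right parabolic. Because $*$ sends the first coordinate into the second via transpose, once $(a,0)$ lies in the generated subalgebra (obtained by taking a high odd power to kill the nilpotent cross-terms, as in Case~III of the proof of Proposition~\ref{PR:central-simple}) its image $(0,a^\trans)$ does too, and $a^\trans$ generates $S_{V(i)^\perp}=(S_{V(i)})^\trans$. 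Thus one suitably chosen element generates $S_{V(i)}\times0$ and $0\times S_{V(i)^\perp}$, hence all of $A_{V(i)}$, giving $\gen(A_{V(i)})\le 1$.

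I expect the type-(1) case to be the main obstacle, precisely because $A_{V(i)}$ is not semisimple and one cannot invoke the clean central-simple machinery; the single generator must simultaneously generate a parabolic algebra and survive the transpose-swap coupling of the two coordinates. The key technical point is choosing the generator so that its odd powers separate the two coordinates (exploiting that the strictly-triangular part is nilpotent while a semisimple diagonal part persists), exactly mirroring the trick used in Case~III of Proposition~\ref{PR:central-simple}'s proof. The type-(2) case is comparatively routine once the adjoint-involution identification is made. Assembling the two cases — with the single exception $n=2$, $B=B_{[\Omega]}$ surfacing from the symplectic $n=2$ clause of Proposition~\ref{PR:central-simple} — completes the proof, and I would close by noting that since each listed $B$ is a proper subalgebra properly containing $K$, none can have $\gen(B)=0$, so the bounds are in fact equalities $\gen(B)=1$ off the exceptional case.
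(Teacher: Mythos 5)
Your treatment of the type-(2) subalgebras $B_{[p]}$ is correct and coincides with the paper's: transport the involution through $\pi_1$ to the adjoint involution $a\mapsto p^{-1}a^\trans p$ on $\M$ and invoke Proposition~\ref{PR:central-simple}, with the exception $n=2$, $[p]=[\Omega]$ surfacing from the symplectic clause. The reduction to the representatives of Proposition~\ref{PR:representatives} is also fine.

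The type-(1) case, however, has a genuine gap, and it sits exactly at the step you flagged as the main obstacle. You propose to ``take $a\in S_{V(i)}$ to be a single matrix that generates $S_{V(i)}$ as a unital algebra without involution.'' No such matrix exists: the unital subalgebra of $\M$ generated by one element $a$ is $K[a]$, which is commutative, whereas $S_{V(i)}$ is noncommutative for every $1\le i\le n-1$ (for instance $e_{1,1},e_{1,n}\in S_{V(i)}$ and $e_{1,1}e_{1,n}=e_{1,n}\neq 0=e_{1,n}e_{1,1}$). The rest of your mechanism inherits this problem: every element of the $*$-subalgebra generated by $(a,b)$ has first coordinate lying in the unital algebra generated by $a$ and $b^\trans$, so whatever you extract by powering, the first coordinate alone can never account for all of $S_{V(i)}$; you need \emph{both} $a$ and $b^\trans$ to contribute, and your sketch assigns $b$ no such role. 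There is also a tension in the powering step itself: recovering $(a,0)$ from high odd powers of $(a,b)$ requires $a$ to be reproduced by its powers while $b$ dies, which rules out giving $a$ the nilpotent (shift-type) part your candidate requires.

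The paper resolves precisely this tension by letting the involution supply the missing noncommuting generators, with the roles of the two coordinates reversed relative to your sketch. It takes $g=(u,\,d_{1,k}+\alpha d_{k+1,n})$ with $\alpha\in K-\{0,\pm 1\}$: the \emph{first} coordinate is the nilpotent $u$, the \emph{second} is stable under odd powers, so $g^{\ell}=(0,\,d_{1,k}+\alpha^{\ell}d_{k+1,n})$ for odd $\ell\geq n$. Two such powers ($\ell$ and $\ell+2$) separate $(0,d_{1,k})$ and $(0,d_{k+1,n})$ by linear algebra, and applying $*$ yields $(d_{1,k},0)$ and $(d_{k+1,n},0)$. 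The generated algebra thus contains \emph{three} useful matrices in the first coordinate --- $u$, $d_{1,k}$, $d_{k+1,n}$ --- and it is these three jointly, via Lemma~\ref{LM:identities}(i) and (iii), that produce $e_{k,1}$ and $e_{n,k+1}$ and then a full basis of $S_{V(k)}$, giving $S_{V(k)}\times 0$ and, after applying $*$ once more, $0\times S_{V(k)^{\perp}}$. (Note that both marker matrices $d_{1,k}$ and $d_{k+1,n}$ are needed: $u$ together with $e_{k,1}$ alone only yields the top $k$ rows of $\M$, not the lower-right block.) If you rebuild your argument with the nilpotent matrix in one coordinate and the symmetric markers in the other, your outline becomes the paper's proof; as written, its first step is impossible.
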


\begin{proof}
By replacing $B$ with $[c]B$ for some $[c]\in\PGL$, we may assume $B$
is one of the algebras listed in Proposition~\ref{PR:representatives}.
(Note   that when $n=2$ and $B=B_{[\Omega]}$, Lemma~\ref{LM:action}
tells us that  $[c]B=B$, because $[d^{\trans} \Omega_2 d]=[\Omega_2]$
for every $d\in\GL_2$.)
We shall   use the matrices defined in  Lemma~\ref{LM:identities}.

Suppose first that $B=A_{V(k)}$, where $1\le k \le n-1$. Let $\alpha\in K-\{0,\pm1\}$ and denote $g=(u,d_{1,k}+\alpha d_{k+1,n})\in A_{V(k)}$. 
Let $\langle g\rangle$ be the (unital, involutive)
subalgebra of $A_{V(k)}$ generated by $g$. We will show that $\langle g\rangle=A$.

Let $\ell\ge n$ be odd. Since $u^n=0$ and $d_{1,k}d_{k+1,n}=d_{k+1,n}d_{1,k}=0$, 
we have $g^\ell=(0,d_{1,k}+\alpha^\ell d_{k+1,n})=(0,d_{1,k})+\alpha^\ell(0,d_{k+1,n})$ and similarly $g^{\ell+2}=(0,d_{1,k})+\alpha^{\ell+2}(0,d_{k+1,n})$. 
Since $\alpha\notin\{0,\pm1\}$ we have $\alpha^\ell\ne\alpha^{\ell+2}$ and so the coefficient vectors $(1,\alpha^\ell)$ and $(1,\alpha^{\ell+2})$ are linearly independent. Hence, $(0,d_{1,k})$ and $(0,d_{k+1,n})$ are linear combinations of $g^\ell$ and $g^{\ell+2}$ and so $(0,d_{1,k}),(0,d_{k+1,n})\in\langle g\rangle$. 
Therefore, $(d_{1,k},0)=(0,d_{1,k})^*$ and $(d_{k+1,n},0)=(0,d_{k+1,n})^*$ are also in $\langle g\rangle$. 
By Lemma~\ref{LM:identities}(iii), this implies   $(e_{k,1},0),(e_{n,k+1},0)\in \langle g\rangle$. 
Now, by Lemma~\ref{LM:identities}(i), if $1\leq i\le k$, then $(e_{i,j},0)=(u^{k-i}e_{k,1}u^{j-1},0)=g^{k-i}(e_{k,1},0)g^{j-1}\in\langle g\rangle$, 
and if $k<j\leq n$, then $(e_{i,j},0)=(u^{n-i}e_{n,k+1}u^{j-k-1},0)=g^{n-i}(e_{k,1},0)g^{j-k-1}\in\langle g\rangle$.
The matrices $e_{i,j}$ with $i\le k$ or $ k<j$ form a basis of $S_{V(k)}$, so $S_{V(k)}\times0\subseteq\langle g\rangle$. Thus, $0\times S_{V(k)^\perp}=(S_{V(k)}\times0)^*\subseteq\langle g\rangle$ and we conclude that $\langle g\rangle=A_{V(k)}$.

Suppose next that $B=B_{[I]}$.
Then $\pi_1:B\to (\M,\trans)$ is an isomorphism of algebras with involution,
hence $\gen(B_{[I]})\leq 1$ by Proposition~\ref{PR:central-simple}.

Finally, if $B=B_{[\Omega]}$, then $\pi_1$ induces an isomorphism from $B$ to
$\M$ with the symplectic involution $a\mapsto \Omega a^\trans \Omega^{-1}$,
and again we finish by Proposition~\ref{PR:central-simple}.
\end{proof}

\section{The Dimension of \texorpdfstring{$Z_r$}{Zr} in the Case \texorpdfstring{$A=(A_n,*)$}{A=(An,*)}}
\label{sec:dim-Zr}

In this section, we apply Theorem~\ref{TH:dimZr-and-irred-comps} and the
results of Section~\ref{sec:maximal-subalg} in order to prove 
Theorem~\ref{TH:codim-unitary}, i.e.,
finding $\dim Z_r$ for the algebra $(A_n,*)$ of Section~\ref{sec:Az-alg-with-inv}.
We then prove Corollary~\ref{CR:generators-of-Azumaya-alg} and a few more
results about the number of generators of Azumaya algebras with involution.

Throughout, $K$ is field,    $A=(A_n,*)$ is as in Section~\ref{sec:Az-alg-with-inv}
and   $Z_r$ is the scheme of $r$-tuples not generating $A$
(Section~\ref{sec:non-generators}).


\begin{proof}[Proof of Theorem~\ref{TH:codim-unitary}]

Recall that we need to show that $\dim Z_r = 2rn^2 - (2r-1)(n-1)$ if $n>1$ and $\dim Z_r=r$
otherwise.
It is enough to show this when   $K$ is algebraically closed.
To that end, we apply Theorem~\ref{TH:dimZr-and-irred-comps}(ii).

Let $G$ being the identity
connected component of $\Aut_K(A)$, which we identify with $\PGL$
using  Proposition~\ref{PR:aut-group}. 
The  action of $G$ on the maximal subalgebras of $A$ has finitely many orbits
with representatives listed in Proposition~\ref{PR:representatives}.
Using the notation of that proposition, let
$s(i,r)=r\dim A_{V(i)}-\dim\Stab_G(A_{V(i)})$,
$s'(r)=r\dim B_{[I]}-\dim \Stab_G(B_{[I]})$
and for even $n$,
let $s''(r)=  r\dim B_{[\Omega]}-\dim \Stab_G(B_{[\Omega]})$.

Suppose $n>1$. By Proposition~\ref{PR:stabilizers}, we have
\begin{align*}
s(i,r)&=r\dim A_{V(i)}-\dim\Stab_G(A_{V(i)}) =
2r\dim S_{V(i)} - \dim \PS_{V(i)}^\times \\
&=2r[ n^2-i(n-i)] - [n^2 -i(n-i) -1] =(2r-1)(n^2-i(n-i))+1.
\end{align*}
Similarly, we have
\begin{align*}
s'(r)& =r\dim B_{[I]} - \dim \Stab_G(B_{[I]})=rn^2-\dim \POb, \\
s''(r)& =r\dim B_{[\Omega]} - \dim \Stab_G(B_{[\Omega]})=rn^2-\dim \PSp,
\end{align*}
where in the second equation, $n$ is assumed to be even.
It is well-known that $\dim \POb=\frac{n(n-1)}{2}$ when
$\Char K\neq2$ and $\dim \PSp=\frac{n(n+1)}{2}$ in any characteristic. 
As for $\dim \POb$ in the case $\Char K=2$, since $I=-I$,
on the level of $K$-points, the quotient map $\Ob\to \POb$
is a homeomorphism, so $\dim \Ob=\dim \POb$.
The algebraic group  $\Ob$ is the scheme of $n\times n$ matrices $a$ satisfying
$a^\trans a=I$, so it is the zero locus of $\frac{n(n+1)}{2}$ equations in $n^2$ variables,
which means that $\dim \Ob\geq n^2 - \frac{n(n+1)}{2}=\frac{n(n-1)}{2}$.
Putting everything together gives 
\[
s'(r)\leq rn^2-\frac{n(n-1)}{2} \qquad\text{and}\qquad
s''(r)= rn^2-\frac{n(n+1)}{2} .
\]
The maximum of $s(i,r)$ with $i\in \{1,\dots,n-1\}$ is attained at $i=1$ and $i=n-1$,
and elementary analysis implies that
\[
s(1,r)=(2r-1)(n^2-n+1)+1=rn^2+(r-1)(n-1)^2 - n+(r+1)
\]
is greater than $s'(r)$ and $s''(r)$.
By Proposition~\ref{PR:subalgebra-generators}, $\gen(A_{V(1)})=1\leq r$,
so   
Theorem~\ref{TH:dimZr-and-irred-comps}(ii) tells us that
\begin{align*}
\dim Z_r & =\dim \PGL + s(1,r)
=(n^2-1)+ (2r-1)(n^2-n+1)+1
\\
&= 2rn^2-(2r-1)(n-1).
\end{align*}

In the remaining case $n=1$, $A$ has only one maximal subalgebra, 
namely $B_{[I]}\cong (K,\id_K)$, it can be generated by $0$ elements
(using the unity),
and $\dim G=0$.
Thus, by Theorem~\ref{TH:dimZr-and-irred-comps}(ii), $\dim Z_r = r\dim B_{[I]} = r$.
\end{proof}


\begin{proof}[Proof of Corollary~\ref{CR:generators-of-Azumaya-alg}]
	Let $R$ be a ring of Krull dimension $d$ finitely generated over an infinite field
	$K$ and let $(B,\tau)$ be an Azumaya algebra with unitary involution of degree $n>1$ over $R$. 
	Then $(B,\tau)$
	is an $R$-form of $A=(A_n,*)$ (Proposition~\ref{PR:forms-of-A}). By
	Theorem~\ref{TH:codim-unitary},
	we have $c_A(r):=\dim A^r-\dim Z_r =(2r-1)(n-1)$. 
	One readily checks that $c_A(r)>d$ if and only if
	$r>\frac{d}{2n-2}+\frac{1}{2}$, or equivalently,
	if $r\geq \floor{\frac{d}{2n-2}+\frac{3}{2}}$. The corollary
	is now a consequence of Theorem~\ref{TH:cAr-and-number-of-generators}.
\end{proof}

\begin{remark} 
In Corollary~\ref{CR:generators-of-Azumaya-alg}, we view
$(B,\tau)$   as a \emph{unital} algebra with involution.
This means in particular that the unity may be used in generating 
the algebra by a subset.
However, by  \cite[Lem.~2.2]{First_2022_generators_of_alg_over_comm_ring} (which extends verbatim to algebras with involution), the number of generators of a form of $(A_n,*)$  
is not changed if we consider it as a non-unital algebra with involution.
\end{remark}

\begin{remark}\label{RM:lower-bounds}
	Let $R$ be a ring and let $B$ be an Azumaya algebra over $R$.
	Then $B^+:=B\times B^\op$ with the involution $\sigma(x,y^\op)=(y,x^\op)$
	is an Azumaya algebra with a unitary involution over $R$.
	It is straightforward to see that if $(B^+,\sigma)$ can be generated
	by $r$ elements, then $B$ can be generated by $2r$ elements, hence
	\[
	\gen_R( B^+,\sigma)\geq  \Ceil{\frac{1}{2}\gen_R(B)} .
	\]
	
	Suppose $\Char K=0$. In \cite[Thm.~1.5(b)]{First_2022_generators_of_alg_over_comm_ring}
	and \cite[Thm.~1.3]{Gant_2024_space_gens_2by2_matrix}, it is shown
	that for every $d\geq 0$ and $n\geq 2$, there is a smooth $K$-ring  $R$ of Krull dimension $d$
	carrying
	an Azumaya algebra $B$ of degree $n$ such that
	$\gen_R(B)\geq \Floor{\frac{d}{2n-2}}+2$ if $n>2$ and $\gen_R(B)\geq 2\Floor{\frac{d}{4}}+2$
	if $n=2$. Since
	$\Ceil{\frac{1}{2}\Floor{x}}=\Floor{\frac{x+1}{2}}$,	
	for that $R$-algebra $B$, we get
	\[
	\gen_R(B^+,\sigma)\geq \left\{\begin{array}{ll}
	\Floor{\frac{d }{4n-4} +\frac{3}{2}}  & n>2 \\[0.2cm]
	\Floor{\frac{d}{4}}+1 & n=2,
	\end{array}\right.
	\]
	which is  about half of the upper bound given in Corollary~\ref{CR:generators-of-Azumaya-alg}.
	
	By \cite[Thm.~1.5(a)]{First_2022_generators_of_alg_over_comm_ring}, every degree-$n$ Azumaya algebra over
	a finitely generated $K$-ring $R$ of Krull dimension $d$ can be generated by $\Floor{\frac{d}{n-1}}+2$
	elements. If this upper bound is tight,
	i.e., there are examples $B$ meeting this bound, then we would get
	$\gen_R(B^+,\sigma)\geq \Ceil{\frac{1}{2}\Floor{\frac{d}{n-1}}}+1=\Floor{\frac{d}{2n-2}+\frac{3}{2}}$,
	meaning that the upper bound of Corollary~\ref{CR:generators-of-Azumaya-alg} is  tight as well.
\end{remark}

Note that the upper bound of Corollary~\ref{CR:generators-of-Azumaya-alg}
applies only when the base ring $R$ is finitely generated over an infinite field.
By 
building on \cite[Thm.~1.2]{First_2017_number_of_generators}
and our Proposition~\ref{PR:central-simple},
we can   give a weaker
upper bound 
which  
applies 
over any noetherian ring $R$
and to Azumaya algebras with involution of any type.

Here, following \cite[\S1.4]{First_2022_octagon}, we call an $R$-algebra with $R$-involution
$(B,\tau)$ Azumaya if $B$ is Azumaya over its center $Z$, $Z$ is finite \'etale over
$R$ and the structure map $R\to \{b\in Z\suchthat \tau(b)=b\}$ is an isomorphism.
Given a maximal ideal $\frakm$ of $R$, we write $B(\frakm)=B/B\frakm$
and $\tau(\frakm)$ for the involution of $B(\frakm)$ induced by $\tau$. The pair
$(B(\frakm),\tau(\frakm))$ is then a central simple algebra with involution over $R/\frakm$.\footnote{
	This fact is explained in \cite[\S1.4]{First_2022_octagon} under the assumption $2\in R^\times$, but it holds
	without it. Indeed,  $B(\frakm)$ is Azumaya over its center, which may be 
	identified with $Z(\frakm)$ by \cite[Lem.~1.4]{First_2022_octagon}.
	The $R$-involution $\tau|_Z:Z\to Z$ is \emph{standard} in the sense that
	for all $z\in Z$, we have $\tau(z)+z,\tau(z)z\in R$, so this also holds for 
	the $R/\frakm$-involution $\tau(\frakm):Z(\frakm)\to Z(\frakm)$.
	Using the fact that $Z(\frakm)$ is \'etale of dimension $1$ or $2$ over $R/\frakm$,
	it follows easily that  the fixed subring of $\tau(\frakm):Z(\frakm)\to Z(\frakm)$
	is $R/\frakm$.
}

\begin{prp}
	Let $R$ be a noetherian ring and let $(B,\tau)$ be an Azumaya algebra with involution
	over $R$. 
	Let $d$ be the dimension\footnote{
		Here, the dimension
		of a topological space $X$ is the supremum of the set of integers
		$d\geq 0$ for which there is a chain of irreducible closed subsets
		$X_0\subsetneq X_1\subsetneq \dots\subsetneq X_d$ in $X$.	
	} of $\Max R$, the set of maximal ideals in $R$,
	viewed as a topological subspace of $\Spec R$. If for every $\frakm\in \Max R$,
	$(B(\frakm),\tau(\frakm))$ is not a degree-$2$ central simple algebra
	with a symplectic involution, then $\gen_R(B,\tau)\leq d+1$.
\end{prp}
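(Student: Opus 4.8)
The plan is to reduce the global bound to a fiberwise computation via a Forster--Swan-type theorem. First I would record that $(B,\tau)$ is module-finite over $R$: since $B$ is Azumaya over its center $Z$ and $Z$ is finite \'etale over $R$, the algebra $B$ is a finitely generated projective $R$-module, and the involution $\tau$ is an $R$-linear unary operation. Thus $(B,\tau)$ is a module-finite $R$-multialgebra in the sense of Section~\ref{sec:non-generators}, which is the setting to which \cite[Thm.~1.2]{First_2017_number_of_generators} applies.

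Next I would analyze the fibers. For each $\frakm\in\Max R$, the reasoning recalled just before the statement (relying on \cite[\S1.4]{First_2022_octagon} and \cite[Lem.~1.4]{First_2022_octagon}) identifies $(B(\frakm),\tau(\frakm))$ with a degree-$n_{\frakm}$ central simple algebra with involution over the residue field $R/\frakm$, the involution being of the first or second kind according to whether $Z(\frakm)$ is a field or a product of two fields. The key point is that $R/\frakm$ may be \emph{finite}, which is precisely why Proposition~\ref{PR:central-simple} was established without assuming the base field infinite. Applying that proposition together with the hypothesis that no fiber is a degree-$2$ algebra with symplectic involution yields $\gen_{R/\frakm}(B(\frakm),\tau(\frakm))\leq 1$ for every $\frakm$, the value being $0$ in the excluded degree-$1$ orthogonal case and $1$ in all remaining cases.

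Finally I would invoke the Forster--Swan-type bound \cite[Thm.~1.2]{First_2017_number_of_generators}: for a module-finite multialgebra $A$ over a noetherian ring $R$, if $A(\frakm)$ is generated by at most $r$ elements over $R/\frakm$ for every $\frakm\in\Max R$, then $A$ is generated by at most $r+\dim\Max R$ elements over $R$. Taking $r=1$ and $d=\dim\Max R$ gives $\gen_R(B,\tau)\leq d+1$, as required.

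The step I expect to require the most care is checking that \cite[Thm.~1.2]{First_2017_number_of_generators} applies to algebras \emph{with involution}, and that its local hypothesis is exactly the uniform fiberwise bound produced above; the multialgebra formalism of Section~\ref{sec:non-generators} is what makes this transition clean, since an algebra with involution is a multialgebra whose forms and generation numbers are defined in that generality. A secondary point to verify is that the local bound of $1$ genuinely holds \emph{uniformly} across all maximal ideals --- including those where the degree equals $1$ or where $R/\frakm$ is finite --- so that the value $r=1$ is legitimate in the global estimate.
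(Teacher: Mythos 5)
Your proposal is correct and follows essentially the same route as the paper: both reduce to the fiberwise bound via \cite[Thm.~1.2]{First_2017_number_of_generators} and then apply Proposition~\ref{PR:central-simple} (valid over finite fields, which is exactly why that proposition was proved in full generality) together with the exclusion of the degree-$2$ symplectic case to get $\gen_{R/\frakm}(B(\frakm),\tau(\frakm))\leq 1$ uniformly. The extra care you take in verifying module-finiteness and the applicability of the Forster--Swan-type theorem to multialgebras with involution is implicit in the paper's two-line proof but not a different argument.
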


\begin{proof}
	By   \cite[Thm.~1.2]{First_2017_number_of_generators},
	$\gen_R(B,\tau)\leq d+\sup\{\gen_{R/\frakm}(B(\frakm),\tau(\frakm))\where
	\frakm\in\Max R\}$.
	Since $(B(\frakm),\tau(\frakm))$ is a central simple algebra with involution over $R/\frakm$,
	the required upper bound follows from Proposition~\ref{PR:central-simple}.
\end{proof}

\section{The Irreducible Components of \texorpdfstring{$Z_r$}{Zr} in the Case \texorpdfstring{$A=(A_n,*)$}{A=(An,*)}}
\label{sec:irred-comps}

We finish with describing the irreducible components of $Z_r$
in the case $A=(A_n,*)$. 
Thanks to what we have shown so far, giving a finite list of closed
subsets of $Z_r$ which includes the irreducible components is a simple task,
and the majority of the work will be dedicated to determining which of these sets
is indeed an irreducible component. That turns out to depend on $r$, $n$ and $\Char K$.

Throughout this section, we fix $r,n\in\N$,
let   $K$ be an algebraically closed field and write
$A=(A_n,*)$.
For every $i\in  \{1,\dots,n-1\}$, we denote by 
\[ X_i\subseteq A^r\] 
the set of tuples 
$((a_1,b_1),\dots,(a_r,b_r))\in A^r$
 for which there is an $i$-dimensional subspace
$V\subseteq K^n$ such that $a_1,\dots,a_r$ stabilize $V$
and $b_1,\dots,b_r$ stabilize $V^\perp$. (As before, $V^\perp$ is  w.r.t.\ the 
bilinear pairing $\Trings{x,y}=\sum_j x_j y_j$.)
In addition, let 
\begin{align*}
Y &:= \{((a_1,b_1),\dots,(a_r,b_r))\in A^r\suchthat \text{there is} \\ 
&\qquad \quad  \text{a symmetric non-alternating $p\in \GL_n(K)$ s.t.\ $pa_ip^{-1}=b_i$ for all $i$} \},\\
Y'&:= \{((a_1,b_1),\dots,(a_r,b_r))\in A^r\suchthat \text{there is}  \\
&\qquad \quad  \text{an alternating $p\in \GL_n(K)$ s.t.\ $pa_ip^{-1}=b_i$ for all $i$} \}.
\end{align*}
The set $Y'$ is empty if $n$ is odd. When referring to $\overline{Y}$
and $\overline{Y'}$, the closure is taken in $A^r$ and with respect to the Zariski topology.

\begin{thm}\label{TH:irred-comps}
	With notation as above:	
	\begin{enumerate}[label=(\roman*)]
		\item If $n$ is odd, or $(n,r)=(2,1)$, 
		or $\Char K=2$, 
		then   the irreducible components of $Z_r$  
		are $X_1,\dots,X_{n-1}$ and $\overline{Y}$.
		\item If $n$ is even, $(n,r)\neq (2,1)$ and $\Char K\neq 2$, 
		then   the irreducible components of $Z_r$
		are  $X_1,\dots,X_{n-1},\overline{Y}$ and $\overline{Y'}$.
	\end{enumerate}
\end{thm}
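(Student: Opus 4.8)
The plan is to apply Proposition~\ref{PR:dim-Zr-detailed} with $G=G^\circ=\PGL$, which is connected by Proposition~\ref{PR:aut-group}, and the orbit representatives $A_{V(1)},\dots,A_{V(n-1)},B_{[I]}$ and (for even $n$) $B_{[\Omega]}$ of Proposition~\ref{PR:representatives}. By Lemma~\ref{LM:action}, the sets $X_1,\dots,X_{n-1}$, $\overline{Y}$ and $\overline{Y'}$ defined above are exactly the closed sets $\overline{\bigcup_g g(A_{V(i)})^r}$, $\overline{\bigcup_g g(B_{[I]})^r}$ and $\overline{\bigcup_g g(B_{[\Omega]})^r}$ attached to these representatives (with $Y'$ empty when $n$ is odd). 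Hence Proposition~\ref{PR:dim-Zr-detailed}(i),(iii) gives $Z_r=X_1\cup\dots\cup X_{n-1}\cup\overline{Y}\cup\overline{Y'}$, and its irreducible components are the maximal members of this family. Everything therefore reduces to deciding the inclusions among these (irreducible, closed) sets.

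First I would dispose of the subspace-type members. Since the stabiliser of each $A_{V(j)}$ is the parabolic subgroup $\PS_{V(j)}^\times$ (Proposition~\ref{PR:stabilizers}), Proposition~\ref{PR:dim-Zr-detailed}(v) shows each $\bigcup_g g(A_{V(j)})^r$ is already closed, and since $\gen(A_{V(i)})=1\le r$ (Proposition~\ref{PR:subalgebra-generators}), part (iv) of the same proposition yields $X_i\nsubseteq X_j$ for all $i\neq j$. To see $X_i\nsubseteq\overline{Y},\overline{Y'}$, I would use the closed locus $W_{\le n^2}$ of tuples generating a subalgebra of dimension $\le n^2$ (introduced in the proof of Proposition~\ref{PR:Grassmannian-test}): one has $\overline{Y},\overline{Y'}\subseteq W_{\le n^2}$, whereas $X_i$ contains a tuple generating the algebra $A_{V(i)}$ of dimension $2(n^2-i(n-i))>n^2$, which cannot lie in $W_{\le n^2}$. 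Thus every $X_i$ is a component. Symmetrically, $\overline{Y}\nsubseteq X_i$ and (when $(n,r)\neq(2,1)$) $\overline{Y'}\nsubseteq X_i$, because a tuple generating $B_{[I]}$ or $B_{[\Omega]}$ has full first projection $\M$, while $\pi_1(g(A_{V(i)}))=S_{gV(i)}\subsetneq\M$; here for $\overline{Y'}$ I use $r\ge\gen(B_{[\Omega]})$, which holds precisely because $(n,r)\neq(2,1)$.

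It remains to compare $\overline{Y}$ and $\overline{Y'}$, which is the heart of the matter and the only place where $\Char K$ enters. Computing dimensions by Proposition~\ref{PR:dim-Zr-detailed}(ii) (valid once $\gen(B_{[\Omega]})\le r$) gives $\dim\overline{Y}-\dim\overline{Y'}=\dim\PSp-\dim\POb=n>0$, so $\overline{Y}\nsubseteq\overline{Y'}$, and $\overline{Y}$ is always a component. The delicate claim is $\overline{Y'}\nsubseteq\overline{Y}$ when $\Char K\neq2$ and $(n,r)\neq(2,1)$. For this I would invoke Proposition~\ref{PR:Grassmannian-test}, reducing to showing that in the Grassmannian of $n^2$-planes in $A$ the orbit-closure of $B_{[\Omega]}$ is not contained in the orbit-closure $\overline{Z_2}$ of $B_{[I]}$, i.e.\ that $B_{[\Omega]}\notin\overline{Z_2}$. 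The key invariant is the $G$-invariant function $C\mapsto\dim(C\cap A^{\mathrm{sym}})$, where $A^{\mathrm{sym}}=\{(a,a^\trans)\suchthat a\in\M\}$ is the fixed space of $*$; it is upper semicontinuous on the Grassmannian. A direct computation gives the value $\tfrac{n(n+1)}{2}$ on every $g(B_{[I]})$ and $\tfrac{n(n-1)}{2}$ on $B_{[\Omega]}$; since the former is strictly larger and the function cannot drop on passing to a closure, $B_{[\Omega]}\notin\overline{Z_2}$, whence $\overline{Y'}\nsubseteq\overline{Y}$.

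Finally I would treat the two exceptions, where $\overline{Y'}$ is instead absorbed. When $\Char K=2$ (and $n$ even), I would use the deformation $q_t=\Omega+tI$: in characteristic $2$ each $q_t$ is symmetric and, for $t\neq0$, non-alternating and generically invertible, with $q_t\to\Omega$ as $t\to0$. For a fixed tuple, $t\mapsto((a_i,q_ta_iq_t^{-1}))_i$ is then a morphism landing in $Y$ on a punctured neighbourhood of $0$ whose value at $t=0$ is the corresponding point of $Y'$; by closedness and $G$-stability this gives $\overline{Y'}\subseteq\overline{Y}$. When $(n,r)=(2,1)$, every invertible alternating $2\times2$ matrix is a scalar multiple of $\Omega$, so the orbit of $B_{[\Omega]}$ is a single point and $\overline{Y'}=B_{[\Omega]}$; for any $(a,\Omega a\Omega^{-1})$ I would take an eigenline $V$ of $a$ and observe that $\Omega V=V^\perp$ (as $v^\trans\Omega v=0$), so $\Omega a\Omega^{-1}$ stabilises $V^\perp$ and the point lies in $X_1$, giving $\overline{Y'}\subseteq X_1$. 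Assembling these facts yields exactly cases (i) and (ii). The main obstacle is the characteristic-dependent comparison of $\overline{Y}$ and $\overline{Y'}$; the semicontinuity invariant $\dim(C\cap A^{\mathrm{sym}})$ is what makes it clean, and it is precisely this invariant that degenerates when $\Char K=2$, accounting for the exception there.
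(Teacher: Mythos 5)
Your proof is correct and follows the same overall skeleton as the paper's (Proposition~\ref{PR:dim-Zr-detailed} to reduce everything to deciding inclusions among $X_1,\dots,X_{n-1},\overline{Y},\overline{Y'}$, Proposition~\ref{PR:Grassmannian-test} for the hard non-containment, and explicit absorption of $\overline{Y'}$ in the exceptional cases), but two of your steps are genuinely different. First, for $X_i\nsubseteq\overline{Y},\overline{Y'}$ you use the closed locus $W_{\leq n^2}$ of tuples generating a subalgebra of dimension at most $n^2$, together with $\dim A_{V(i)}=2(n^2-i(n-i))>n^2$; the paper instead compares the dimensions of $X_i$ and $\overline{Y},\overline{Y'}$ computed via Proposition~\ref{PR:dim-Zr-detailed}(ii). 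Both work, and yours needs no dimension formulas. Second, and more substantially: to verify the hypothesis $\overline{Z_1}\nsubseteq\overline{Z_2}$ of Proposition~\ref{PR:Grassmannian-test} when $\Char K\neq 2$, the paper identifies the subalgebras $B_{[p]}$ with graphs of endomorphisms of $\M$, transports the problem to the affine space of endomorphisms via the resulting open immersion into the Grassmannian, and pulls the closures back to the closed sets of symmetric and alternating classes in $\PGL$; you instead use the upper semicontinuous invariant $C\mapsto\dim(C\cap A^{\mathrm{sym}})$, where $A^{\mathrm{sym}}=\{(a,a^\trans)\suchthat a\in\M\}$, which takes the constant value $\frac{n(n+1)}{2}$ on the whole orbit of $B_{[I]}$ (since $B_{[q]}\cap A^{\mathrm{sym}}\cong\{a\suchthat qa=(qa)^\trans\}$ for symmetric $q$) and the smaller value $\frac{n(n-1)}{2}$ at $B_{[\Omega]}$. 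This is correct and arguably cleaner than the paper's graph argument, and it makes transparent why the statement degenerates when $\Char K=2$. Your treatments of the exceptional cases (the deformation $\Omega+tI$ for $\Char K=2$, and the eigenline argument with $\Omega V=V^\perp$ for $(n,r)=(2,1)$) coincide in substance with the paper's Lemma~\ref{LM:do-contain}.

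One assertion you make is not justified: in computing $\dim\overline{Y}-\dim\overline{Y'}=\dim\PSp-\dim\POb=n>0$ you implicitly use $\dim\POb=\frac{n(n-1)}{2}$ in \emph{every} characteristic, whereas the paper establishes this only for $\Char K\neq 2$; in characteristic $2$ it proves only $\dim\Ob\geq\frac{n(n-1)}{2}$, which bounds the difference in the wrong direction. (The equality is presumably true in characteristic $2$, but proving it requires an analysis of $\Ob$ that neither you nor the paper carries out; the paper instead proves $\overline{Y}\nsubseteq\overline{Y'}$ in characteristic $2$ by a second application of the Grassmannian test, using $\overline{S-S'}=S\nsubseteq S'$.) Fortunately the gap is harmless for your argument: in characteristic $2$ you prove $\overline{Y'}\subseteq\overline{Y}$, so $\overline{Y}$ cannot be \emph{properly} contained in $\overline{Y'}$, hence it is a maximal member of the family, and the set of maximal members is $\{X_1,\dots,X_{n-1},\overline{Y}\}$ whether or not $\overline{Y}\nsubseteq\overline{Y'}$ holds. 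You should either restrict the dimension claim to $\Char K\neq 2$ or replace it in characteristic $2$ by this remark.
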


We will prove the theorem in a series of lemmas.
As before, we let $\M=\nMat{K}{n}$, $\GL_n=\GL_n(K)$
and so forth. In addition, let $G$ denote  the identity connected
component of $\Aut_K(A)$, which we identify
with $\PGL$ (Proposition~\ref{PR:aut-group}).  
Recall from   Proposition~\ref{PR:representatives} that  $G$
acts on the maximal subalgebras of $A$ and the orbits 
are represented by  $A_{V(1)},\dots,A_{V(n-1)}$, $B_{[I]}$ if $n$
is odd and $A_{V(1)},\dots,A_{V(n-1)}$, $B_{[I]}$, $B_{[\Omega]}$ if $n$
is even. 

\begin{lem}\label{LM:properties-of-X-Y}
	The following holds:
	\begin{enumerate}[label=(\roman*)]
		\item $X_i=\bigcup_{g\in G} g(A_{V(i)})^r$ 		
		for all $i\in\{1,\dots,n-1\}$. 
		Moreover, $X_i$ is closed in $A^r$ and $\dim X_i = 2rn^2 - (2r-1) (n-i)i $.
		\item $Y = \bigcup_{g\in G}g(B_{[I]})^r$ and
		$\dim \overline{Y} \leq (r+1)n^2-\frac{n(n-1)}{2}-1$. When $\Char K\neq 2$, equality
		holds.
		\item $Y' = \bigcup_{g\in G}g(B_{[\Omega]})^r$ and
		$\dim \overline{Y'}=(r+1)n^2-\frac{n(n+1)}{2}-1$, provided $n$ is even.
	\end{enumerate}
\end{lem}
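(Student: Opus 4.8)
The plan is to deduce all three parts from Proposition~\ref{PR:dim-Zr-detailed} by first recognizing the explicitly-defined sets $X_i$, $Y$ and $Y'$ as the orbit-unions $\bigcup_{g\in G}g(A_i)^r$ appearing there, and then reading off their dimensions and closedness from parts (ii) and (v) of that proposition, fed with the stabilizer data of Proposition~\ref{PR:stabilizers} and the generator counts of Proposition~\ref{PR:subalgebra-generators}.

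First I would establish the three identifications. For (i), Lemma~\ref{LM:action} gives $[c]A_{V(i)}=A_{cV(i)}$, and since $\GL_n$ acts transitively on $i$-dimensional subspaces, the orbit $\{g(A_{V(i)}):g\in G\}$ is exactly $\{A_V:\dim V=i\}$; as a tuple lies in $A_V^r=(S_V\times S_{V^\perp})^r$ precisely when the $a_j$ stabilize $V$ and the $b_j$ stabilize $V^\perp$, taking the union over all $i$-dimensional $V$ recovers the defining condition of $X_i$. For (ii) and (iii) I would invoke the description of the orbits of $B_{[I]}$ and $B_{[\Omega]}$ from the proof of Proposition~\ref{PR:representatives}: these consist of the $B_{[q]}$ with $q$ congruent to $I$, respectively to $\Omega$, which by the classification of congruence classes over an algebraically closed field recalled in Section~\ref{sec:conventions} are exactly the symmetric non-alternating invertible matrices, respectively the alternating invertible matrices. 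Since $B_{[q]}^r=\{((a_j,qa_jq^{-1}))_j\}$, these orbit-unions are visibly $Y$ and $Y'$.

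With the identifications in hand the dimensions become bookkeeping. Writing $m=\dim S_{V(i)}=n^2-i(n-i)$, so that $\dim A_{V(i)}=2m$ and $\dim\PS_{V(i)}^\times=m-1$, and using $\gen(A_{V(i)})=1\le r$ (Proposition~\ref{PR:subalgebra-generators}) to get equality in Proposition~\ref{PR:dim-Zr-detailed}(ii), I obtain
\[
\dim X_i=(n^2-1)+2rm-(m-1)=2rn^2-(2r-1)(n-i)i,
\]
while closedness of $X_i$ follows from Proposition~\ref{PR:dim-Zr-detailed}(v) once I note that $\PS_{V(i)}^\times$ is parabolic, being the image in $\PGL$ of the stabilizer of $V(i)$ in $\GL_n$, which is a maximal parabolic subgroup (its quotient is the projective Grassmannian). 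For $Y$, again $\gen(B_{[I]})=1\le r$ yields equality, so $\dim\overline{Y}=(n^2-1)+rn^2-\dim\POb=(r+1)n^2-1-\dim\POb$; substituting $\dim\POb=\tfrac{n(n-1)}{2}$ when $\Char K\ne2$ gives the asserted equality, while the equation-counting bound $\dim\POb\ge\tfrac{n(n-1)}{2}$ valid in every characteristic (as in the proof of Theorem~\ref{TH:codim-unitary}) gives the asserted inequality in general.

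The only place needing care is part (iii) for $n=2$, where $\gen(B_{[\Omega]})=2$ (Proposition~\ref{PR:subalgebra-generators}), so for $r=1$ the equality clause of Proposition~\ref{PR:dim-Zr-detailed}(ii) does not directly apply. For even $n>2$ one has $\gen(B_{[\Omega]})=1\le r$, hence equality gives $\dim\overline{Y'}=(n^2-1)+rn^2-\dim\PSp=(r+1)n^2-\tfrac{n(n+1)}{2}-1$ using $\dim\PSp=\tfrac{n(n+1)}{2}$. For $n=2$ I would instead argue directly: every alternating invertible $2\times2$ matrix is a scalar multiple of $\Omega$, so $[c]B_{[\Omega]}=B_{[\Omega]}$ for all $[c]$, and therefore $Y'=B_{[\Omega]}^r$ is a linear subspace of $A^r$ of dimension $rn^2=4r$, which equals $(r+1)n^2-\tfrac{n(n+1)}{2}-1$ when $n=2$. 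I expect this small-case check, together with keeping the characteristic-$2$ behaviour of $\POb$ straight, to be the only genuine obstacle; everything else is substitution into Proposition~\ref{PR:dim-Zr-detailed}.
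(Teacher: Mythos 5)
Your proposal is correct and follows essentially the same route as the paper: identify $X_i$, $Y$, $Y'$ as orbit-unions via Lemma~\ref{LM:action} and the congruence classification, then feed the stabilizer and generator data into Proposition~\ref{PR:dim-Zr-detailed}(ii) and (v), with the same equation-counting bound on $\dim\Ob$ in characteristic $2$. The only (harmless) deviation is that you treat all of $n=2$ by the direct observation $Y'=B_{[\Omega]}^r$, whereas the paper reserves the direct computation for the genuinely exceptional case $(n,r)=(2,1)$ and uses the equality clause of Proposition~\ref{PR:dim-Zr-detailed}(ii) when $n=2$, $r\geq 2$; both give $\dim\overline{Y'}=4r$.
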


\begin{proof}
	That $X_i=\bigcup_{g\in G} g(A_{V(i)})^r$ follows immediately from Lemma~\ref{LM:action}.
	The same lemma tells us that 
	\[\bigcup_{g\in G} g(B_{[I]})^r=\bigcup_{p\in\GL_n: p\sim I} \{((a_1,pa_1p^{-1}),\dots,
	(a_r,pa_rp^{-1}))\where a_1,\dots,a_r\in \M\},\] 
	($p\sim I$ means that $p $ is congruent to $I$).
	Since $K$ is algebraically closed, a matrix is congruent to $I$ if and only
	if and only it is symmetric and not alternating, so $\bigcup_{g\in G} g(B_{[I]})^r=Y$.
	A similar argument shows that $Y' = \bigcup_{g\in G}g(B_{[\Omega]})^r$, because a matrix $p\in \GL_n$ is congruent
	to $\Omega$ if and only if $p$ is alternating.
	
	We turn to prove that $X_i$ is closed in $A^r$.
	This follows from Proposition~\ref{PR:dim-Zr-detailed}(v) because
	$\Stab_G(A_{V(i)})=\PS_{V(i)}^\times$ (Proposition~\ref{PR:stabilizers})
	and $\PS_{V(i)}^\times$
	is a parabolic subgroup of $\PGL$. (Indeed, $\PGL/\PS_{V(i)}^\times\cong \GL_n/S_{V(i)}^\times$
	is the Grassmannian   of $i$-spaces inside an $n$-space, which is a complete variety.)
	
	We finish by computing (or bounding)
	the dimensions of $X_i$, $\overline{Y}$ and $\overline{Y'}$ using Proposition~\ref{PR:dim-Zr-detailed}(ii).
	Since $\gen_K(A_{V(i)})=1\leq r$
	(Proposition~\ref{PR:subalgebra-generators}),
	we have $\dim X_i = \dim G+r\dim A_{V(i)}-\dim \Stab_G(A_{V(i)})$.
	The quantity $ r\dim A_{V(i)}-\dim \Stab_G(A_{V(i)})$ is the number $s(i,r)$
	computed in the proof of Theorem~\ref{TH:codim-unitary}, so
	$\dim X_i=s(i,r)+n^2-1= 2rn^2 - (2r-1) (n-i)i$.
	In the same way, we have 
	\[\dim \overline{Y} \leq \dim G+s'(i,r)=  (r+1)n^2-\frac{n(n-1)}{2}-1 ,\]
	with equality holding when $\Char K\neq 2$,
	and when $n$ is even,
	\[\dim \overline{Y'} = \dim G+s''(i,r)=  (r+1)n^2-\frac{n(n+1)}{2}-1 ,\]
	provided $\gen(B_{[\Omega]})\geq r$. The latter condition holds by Proposition~\ref{PR:subalgebra-generators}
	as long as $(n,r)\neq (2,1)$, so it remains to find $\dim \overline{Y'}$
	when $(n,r)=(2,1)$.
	
	In the exceptional case $(n,r)=(2,1)$,
	all alternating matrices $p\in \GL_2$ are proportional to $\Omega$,
	so $Y'=B_{[\Omega]}$. In particular, $Y'$ is closed
	and $\dim Y'=4$, which happens to coincide with  $(r+1)n^2-\frac{n(n+1)}{2}-1$.
\end{proof}

\begin{remark}\label{RM:not-closed}
	In general, the sets $Y$ and $Y'$ are not closed in   $A^r$.
	Indeed, suppose that $n\geq 2$, let
	$\alpha\in K-\{1\}$ and put $a_\alpha=[\begin{smallmatrix}
	\alpha & 1 \\
	0 & 1\end{smallmatrix}]\oplus I_{n-2}$,
	$b_\alpha=[\begin{smallmatrix}
	1 & 0 \\
	0 & \alpha \end{smallmatrix}]\oplus I_{n-2}$
	and $p_\alpha=[\begin{smallmatrix}
	0 & \alpha-1 \\
	\alpha-1 & 1\end{smallmatrix}]\oplus I_{n-2}$. 
	One readily checks that $p_\alpha a_\alpha p_\alpha^{-1}=b_\alpha$
	and therefore $((a_\alpha,b_\alpha),\dots,(a_\alpha,b_\alpha))\in Y$
	for all $\alpha\neq 1$.
	This means that $((a_1,b_1),\dots,(a_1,b_1))\in \overline{Y}$,
	but $((a_1,b_1),\dots,(a_1,b_1))$ cannot be in $Y$ because $a_1$
	is not conjugate to $b_1$.
	
	A similar argument using $\frac{n}{2}$ in place of $n$
	and the  matrices $I_2\otimes a_\alpha$, $I_2\otimes b_\alpha$,
	$[\begin{smallmatrix}
	0 & -1 \\
	1 & 0\end{smallmatrix}]\otimes p_\alpha$ shows that
	$Y'$ is not closed in $A^r$ when $n$ is even and $n\geq 4$.
	On the other hand, $Y$ is closed when $n=1$, because then   $Y=B_{[I]}^r$,
	and $Y'$ is closed when $n=2$, because then $Y'=B_{[\Omega]}^r$. 
\end{remark}

\begin{lem}\label{LM:maximal-almost}
	If $n$ is odd (resp.\ even), then the irreducible components of $Z_r$
	are the maximal members of $X_1,\dots,X_{n-1},\overline{Y}$
	(resp.\ $X_1,\dots,X_{n-1},\overline{Y},\overline{Y'}$).
\end{lem}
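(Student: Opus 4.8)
The plan is to obtain this lemma as an immediate instance of Proposition~\ref{PR:dim-Zr-detailed}, applied with the subgroup $G$ there taken to be the identity connected component $G^\circ=\PGL$ of $\Aut_K(A)$ (under the identification of Proposition~\ref{PR:aut-group}). First I would note that $G^\circ$ is a closed \emph{connected} subgroup of $\Aut_K(A)$, so that part (iii) of the proposition---the part whose hypothesis is connectedness---is available to us.

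Next I would check the finiteness hypothesis of Proposition~\ref{PR:dim-Zr-detailed}. By Proposition~\ref{PR:representatives}, the action of $G^\circ$ on the maximal subalgebras of $A$ has finitely many orbits, represented by $A_{V(1)},\dots,A_{V(n-1)},B_{[I]}$ when $n$ is odd, and by these together with $B_{[\Omega]}$ when $n$ is even. For these representatives, the closed sets $\overline{\bigcup_{g\in G^\circ}g(A_i)^r}$ that the proposition attaches to each orbit are, by Lemma~\ref{LM:properties-of-X-Y}, exactly $X_1,\dots,X_{n-1}$ (which are already closed) together with $\overline{Y}$, and with $\overline{Y'}$ in the even case. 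Thus the abstract family of closed subsets produced by Proposition~\ref{PR:dim-Zr-detailed} coincides with the concrete family named in the statement of the lemma. Since $G^\circ$ is connected, Proposition~\ref{PR:dim-Zr-detailed}(iii) then identifies the irreducible components of $Z_r$ with the maximal members of this family, which is precisely the assertion.

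As for obstacles: there is essentially none in the lemma itself, which is the purely formal reduction step and introduces no new geometry. The only point requiring care is the bookkeeping that matches the generically defined closed sets of Proposition~\ref{PR:dim-Zr-detailed} with the explicitly defined $X_i$, $\overline{Y}$ and $\overline{Y'}$, and this is exactly what Lemma~\ref{LM:properties-of-X-Y} supplies. The genuine difficulty is deliberately deferred to the remainder of the section: deciding \emph{which} members of the family are actually maximal---and hence are the true irreducible components---is a delicate dimension comparison depending on $n$, $r$ and $\Char K$, and that is where the real work lies.
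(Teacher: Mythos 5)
Your proposal is correct and is essentially identical to the paper's own proof, which likewise cites Proposition~\ref{PR:dim-Zr-detailed}(iii) (with $G=G^\circ\cong\PGL$ connected), Proposition~\ref{PR:representatives} for the finite-orbit hypothesis, and Lemma~\ref{LM:properties-of-X-Y} to identify the abstract closed sets with $X_1,\dots,X_{n-1},\overline{Y}$ and (for even $n$) $\overline{Y'}$. No gaps; the deferral of the maximality analysis to the subsequent lemmas also matches the paper's structure.
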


\begin{proof}
	This follows from Proposition~\ref{PR:dim-Zr-detailed}(iii),
	Proposition~\ref{PR:representatives} and Lemma~\ref{LM:properties-of-X-Y}.
\end{proof}

The maximal members of  
$X_1,\dots,X_{n-1},
\overline{Y},\overline{Y'}$ will be determined in the following lemmas.

\begin{lem}\label{LM:not-contain-easy}
	With notation as above:
	\begin{enumerate}[label=(\roman*)]
		\item No two of the sets $X_1,\dots,X_{n-1},\overline{Y}$ contain each other.
	\end{enumerate}
	Furthermore, when $n$ is even:
	\begin{enumerate}[label=(\roman*),resume]
		\item None of $X_1,\dots,X_{n-1},\overline{Y}$ is contained in $\overline{Y'}$.
		\item $\overline{Y'}\nsubseteq X_1,\dots,X_{n-1}$, provided  $(n,r)\neq (2,1)$.
		\item $\overline{Y'}\nsubseteq \overline{Y}$, provided $(n,r)\neq (2,1)$ and $\Char K\neq 2$.
	\end{enumerate}
\end{lem}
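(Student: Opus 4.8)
The plan is to sort the non-containments by which tool resolves each. \textbf{First}, every case in which the \emph{larger} set is one of the closed orbits $X_j$ follows at once from Proposition~\ref{PR:dim-Zr-detailed}(iv): this covers $X_i\nsubseteq X_j$ for $i\neq j$ in~(i), $\overline{Y}\nsubseteq X_i$ in~(i), and $\overline{Y'}\nsubseteq X_i$ in~(iii). Indeed, each $X_j=\bigcup_{g\in G}g(A_{V(j)})^r$ is closed by Lemma~\ref{LM:properties-of-X-Y}(i), the subalgebra generating the putatively contained set ($A_{V(i)}$, $B_{[I]}$ or $B_{[\Omega]}$) lies in a $G$-orbit different from that of $A_{V(j)}$ by Proposition~\ref{PR:representatives}, and it is generated by at most $r$ elements by Proposition~\ref{PR:subalgebra-generators}. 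The only constraint is $\gen(B_{[\Omega]})\leq r$, which for $n=2$ means $r\geq 2$; this is exactly why~(iii) excludes $(n,r)=(2,1)$.

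\textbf{Next}, the non-containments $X_i\nsubseteq\overline{Y}$ in~(i) and $X_i\nsubseteq\overline{Y'}$ in~(ii) cannot be obtained this way, since $Y$ and $Y'$ are not closed (Remark~\ref{RM:not-closed}). Here I would use a trace obstruction: any tuple in $Y$ or $Y'$ has $b_1=pa_1p^{-1}$ for some invertible $p$, so $\tr a_1=\tr b_1$; this is a closed condition and therefore holds on $\overline{Y}$ and $\overline{Y'}$. But the tuple $((e_{11},0),0,\dots,0)$ lies in $X_i$ (with witnessing subspace $V(i)$, since $e_{11}$ stabilizes $V(i)$ and $0$ stabilizes $V(i)^\perp$) while $\tr e_{11}=1\neq 0$; hence it lies in neither closure, settling both cases at once.

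\textbf{The heart of the matter} is the pair of comparisons between the two graph-type closures: $\overline{Y}\nsubseteq\overline{Y'}$ in~(ii) and $\overline{Y'}\nsubseteq\overline{Y}$ in~(iv). The trace invariant is blind to the congruence type of the intertwiner, so I would route both through Proposition~\ref{PR:Grassmannian-test}, applied to $B_{[I]}$ and $B_{[\Omega]}$ (both of dimension $n^2$): it reduces $\overline{Y}\nsubseteq\overline{Y'}$ to $\overline{Z_1}\nsubseteq\overline{Z_2}$ and $\overline{Y'}\nsubseteq\overline{Y}$ to $\overline{Z_2}\nsubseteq\overline{Z_1}$ in the Grassmannian $\operatorname{Gr}(n^2,A)$, where $Z_1=\{B_{[p]}\suchthat p\sim I\}$ and $Z_2=\{B_{[p]}\suchthat p\sim\Omega\}$ (cf.\ Lemma~\ref{LM:action}); the hypothesis $r\geq\gen_K(B_{[I]})=1$, resp.\ $r\geq\gen_K(B_{[\Omega]})$, is met, the latter again forcing $(n,r)\neq(2,1)$ in~(iv). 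On the open locus of $\operatorname{Gr}(n^2,A)$ consisting of subspaces transverse to both $\M\times 0$ and $0\times\M$, such a subspace is the graph of a linear isomorphism of $\M$, and on the further (locally closed) sublocus where this isomorphism is inner one recovers morphically the class $[p]\in\bbP(\M)$ with $W=B_{[p]}$. Both $B_{[I]}$ and $B_{[\Omega]}$ lie in this sublocus, and the conditions ``$p$ symmetric'' and ``$p$ alternating'' are closed in $\bbP(\M)$. Consequently $B_{[I]}\in\overline{Z_2}$ would force $[I]$ into the alternating locus; but $I$ has nonzero diagonal, so it is never alternating, giving $\overline{Z_1}\nsubseteq\overline{Z_2}$ and $\overline{Y}\nsubseteq\overline{Y'}$ in \emph{every} characteristic. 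Symmetrically, $B_{[\Omega]}\in\overline{Z_1}$ would force $[\Omega]$ into the symmetric locus, which is impossible precisely when $\Omega$ is not symmetric, i.e.\ when $\Char K\neq 2$; this yields $\overline{Y'}\nsubseteq\overline{Y}$ and explains the characteristic hypothesis in~(iv).

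\textbf{The main obstacle} I anticipate is the last step: because $Z_1$ and $Z_2$ are themselves not closed (Remark~\ref{RM:not-closed}), one cannot simply read off the congruence type of $p$ at a boundary point of $\overline{Z_1}$ or $\overline{Z_2}$. The care lies in verifying that $B_{[I]}$ (resp.\ $B_{[\Omega]}$) sits in the open transversality locus, so that an alleged approximation by points of $Z_2$ (resp.\ $Z_1$) stays, near the limit, inside the sublocus where the morphism $W\mapsto[p]$ is defined; membership in $\overline{Z_2}$ (resp.\ $\overline{Z_1}$) can then be tested by the genuinely closed alternating (resp.\ symmetric) condition. Everything else is bookkeeping with Proposition~\ref{PR:dim-Zr-detailed}(iv) and the trace.
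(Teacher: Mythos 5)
Your proposal is correct, and the skeleton (Proposition~\ref{PR:dim-Zr-detailed}(iv) for everything absorbed into a closed $X_j$, Proposition~\ref{PR:Grassmannian-test} for the comparison of $\overline{Y}$ and $\overline{Y'}$, with the provisos $(n,r)\neq(2,1)$ and $\Char K\neq 2$ entering exactly where you say) matches the paper's proof. You deviate in two genuine ways. First, for $X_i\nsubseteq\overline{Y}$ and $X_i\nsubseteq\overline{Y'}$ the paper simply compares dimensions using Lemma~\ref{LM:properties-of-X-Y} ($\dim X_i$ exceeds the stated bounds on $\dim\overline{Y}$ and $\dim\overline{Y'}$), whereas you use the trace obstruction: $\tr a_j=\tr b_j$ is a closed condition containing $Y\cup Y'$, and the witness $((e_{1,1},0),0,\dots,0)\in X_i$ violates it. Your argument is more elementary, avoids the dimension bookkeeping entirely, and is uniform in the characteristic; the paper's is ``free'' once Lemma~\ref{LM:properties-of-X-Y} is in hand. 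Second, for $\overline{Y}\nsubseteq\overline{Y'}$ the paper again uses dimensions when $\Char K\neq 2$ (where $\dim\overline{Y}$ is known exactly) and invokes the Grassmannian test only in characteristic $2$ (where only an upper bound on $\dim\overline{Y}$ is available); you run the Grassmannian test in all characteristics, which is more uniform and highlights that the real obstruction is that $I$ is never proportional to an alternating matrix. Inside the Grassmannian test your mechanism is the same as the paper's --- identify the locus of graphs with $\End_K(\M)$, note that the inner-automorphism locus is cut out by polynomial equations there, and use that the symmetric, resp.\ alternating, conditions are closed on $\bbP(\M)$ --- except that you test membership at the single points $B_{[I]}$ and $B_{[\Omega]}$, while the paper pulls back the full closures $\overline{Z_1},\overline{Z_2}$ along the graph isomorphism and reduces to $S'\nsubseteq S$ (resp.\ $\overline{S-S'}=S\nsubseteq S'$ in characteristic $2$). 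One point where your write-up should be firmed up: to ``recover $[p]$ morphically'' from the graph $B_{[p]}$ you need the identification of $\PGL$ (as an open subvariety of $\bbP(\M)$) with the closed subvariety of algebra endomorphisms of $\M$ inside $\End_K(\M)$ to be an isomorphism (equivalently, that its inverse is a morphism); the paper uses the same standard fact implicitly when it declares $S,S'$ closed in $H$, so this is not a gap so much as a step both arguments share and you should cite or prove. Also note your localized argument does not actually require the inner locus to be closed, only that your test point lies in it and that $Z_2$ (resp.\ $Z_1$ in characteristic $\neq 2$) is closed within it, which is exactly what you observe in your final paragraph.
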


\begin{proof} 
	Let $i,j  \in\{1,\dots,n-1\}$.
	Each of the algebras $A_{V(1)},\dots,A_{V(n-1)}$, $B_{[I]}$
	can be generated by $1$ element by Proposition~\ref{PR:subalgebra-generators}.
    Since $X_i$ is closed (Lemma~\ref{LM:properties-of-X-Y}),
	Proposition~\ref{PR:dim-Zr-detailed}(iv) tells
	us that $\overline{Y}\nsubseteq X_i $ and $X_i\nsubseteq X_j$ for $i\neq j$.
	The same argument shows that $\overline{Y'}\nsubseteq X_i$
	when $n$ is even and $(n,r)\neq (2,1)$.
	
	The   assertions $X_i\nsubseteq \overline{Y}$ and $X_i\nsubseteq \overline{Y'}$
	hold because the varieties on left hand sides have larger dimension than those
	on the right (Lemma~\ref{LM:properties-of-X-Y}). This also shows that
	$\overline{Y}\nsubseteq \overline{Y'}$ when $\Char K\neq 2$.
	
	It remains to show that 
	$\overline{Y}\nsubseteq \overline{Y'}$ when $\Char K=2$,
	and  	
	$\overline{Y'}\nsubseteq \overline{Y}$ when
	$(n,r)\neq (2,1)$ and $\Char K\neq 2$.

	We begin with showing the second statement, i.e., (iv). 
	To that end, we apply Proposition~\ref{PR:Grassmannian-test}
 	with $A_1=B_{[\Omega]}$, $A_2=B_{[I]}$ and  $G=\PGL$.
 	Note that
	$\gen_K(A_1)\geq r$  by Proposition~\ref{PR:subalgebra-generators}, because $(n,r)\neq (2,1)$.
	
	Let $S$, resp.\ $S'$,  denote set of elements in $\PGL$
	represented by a symmetric, resp.\ alternating,
	matrix in $\GL_n$. 
	By thinking of $\PGL$ as an open subvariety of $\mathbb{P}(\M)$,
	one sees that $S$ and $S'$ are closed in $\PGL$.
	Write $m=n^2$ and let $X$ be the Grassmannian of $m$-spaces in $A=\M\times \M$.
	Then the subsets $Z_1,Z_2\subseteq X$ from Proposition~\ref{PR:Grassmannian-test}
	are $Z_1=\{B_{[p]}\where p\in  S'\}$
	and $Z_2=\{B_{[p]}\where p\in S\}$. 
	By that proposition, in order to show that
	$\overline{Y'}\nsubseteq \overline{Y }$, it is enough
	to show that $\overline{Z_1}\nsubseteq \overline{Z_2}$ in $X$.
	
	Let $H$ be the $K$-vector space
	of linear endomorphisms of $\M$.
	The map $\vphi:H\to X$ sending $t\in H$ to its graph,
	$\{(a,t(a))\where a\in \M\}$, 
	defines an isomorphism from $H$
	onto the open subset $U$ of $X$ consisting of $K$-subspaces
	$L\subseteq \M\times \M$ satisfying $L\oplus (0\times \M)=\M\times \M$.
	Since every $K$-algebra endomorphism of $\M$ is an automorphism
	(because $\M$ is simple and finite dimensional),
	$\PGL$ conincides with the $K$-algebra endomorphisms of $\M$,
	which form a closed subvariety of $H$.
	We may therefore view $S$ and $S'$ as closed subvarieties of $H$.
	Now observe that $Z_1=\vphi(S')$ while $Z_2=\vphi(S)$ (because $\Char K\neq 2$).
	Since $\vphi:H\to U$ is an isomorphism of $K$-varieties, it follows 
	that $\vphi^{-1}(\overline{Z_1})=\vphi^{-1}(\mathrm{Cl}_U(Z_1))=\mathrm{Cl}_H(S')=S'$,
	and similarly, $\vphi^{-1}(\overline{Z_2})=S$.
	Since $S'\nsubseteq S $ (because $\Char K\neq 2$), we must have $\overline{Z_1}\nsubseteq \overline{Z_2}$,
	and we conclude that $\overline{Y'}\nsubseteq \overline{Y }$.
	
	Showing that $\overline{Y }\nsubseteq \overline{Y'}$
	when $\Char K= 2$ is similar, but with the following differences:
	We take $A_1=B_{[I]}$ and $A_2=B_{[\Omega]}$ in Proposition~\ref{PR:Grassmannian-test},
	and since $\Char K = 2$, we get $Z_1=\vphi(S-S')$
	and $Z_2=\vphi(S')$.
	The desired conclusion $\overline{Z_1}\nsubseteq \overline{Z_2}$
	follows from $\overline{S-S'}=S\nsubseteq S'$.
\end{proof}

When $\Char K=2$ or $(n,r)=(2,1)$, it can
happen that   $\overline{Y'}\subseteq X_i$ or $\overline{Y'}\subseteq \overline{Y}$.

\begin{lem}\label{LM:do-contain}
	Suppose $n$ is even.
	\begin{enumerate}[label=(\roman*)]
		\item[(i)] If $\Char K=2$, then $\overline{Y'}\subseteq \overline{Y}$.
		\item[(ii)] If $(n,r)=(2,1)$, then $\overline{Y'}\subseteq X_1\cap \overline{Y}$.
	\end{enumerate}
\end{lem}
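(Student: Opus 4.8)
The plan is to prove both containments using the explicit descriptions $Y=\bigcup_{g\in G}g(B_{[I]})^r$ and $Y'=\bigcup_{g\in G}g(B_{[\Omega]})^r$ from Lemma~\ref{LM:properties-of-X-Y}, exhibiting each point of $\overline{Y'}$ either as a genuine limit of points of $Y$ or as an honest point of $X_1$. For (i) it suffices to show $Y'\subseteq\overline{Y}$, since this forces $\overline{Y'}\subseteq\overline{Y}$. A point of $Y'$ has the form $((a_1,pa_1p^{-1}),\dots,(a_r,pa_rp^{-1}))$ with $p\in\GL_n$ alternating. When $\Char K=2$, an alternating matrix is symmetric with zero diagonal, so $p$ is symmetric, and the idea is to deform it by $p_t:=p+tI$. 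Then $p_t$ is symmetric with diagonal $(t,\dots,t)$, hence non-alternating for $t\neq0$, while $\det(p_t)$ is a polynomial in $t$ nonvanishing at $t=0$, so $p_t\in\GL_n$ for all $t$ in a cofinite subset $U\subseteq\mathbb{A}^1$ containing $0$. The assignment $t\mapsto((a_i,p_ta_ip_t^{-1})_i)$ is a morphism $U\to A^r$ whose restriction to $U\setminus\{0\}$ lands in $Y$; by continuity its value at $t=0$, which is precisely the chosen point of $Y'$, lies in $\overline{Y}$.

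For (ii), recall from the proof of Lemma~\ref{LM:properties-of-X-Y} that when $(n,r)=(2,1)$ every alternating matrix in $\GL_n$ is a scalar multiple of $\Omega$, so $\overline{Y'}=Y'=B_{[\Omega]}=\{(a,\Omega a\Omega^{-1})\where a\in\M\}$, and it remains to prove $B_{[\Omega]}\subseteq X_1$ and $B_{[\Omega]}\subseteq\overline{Y}$. The inclusion into $X_1$ is elementary and works in any characteristic: for $n=2$ and a line $V=Kv$ one has $V^\perp=K\Omega v=\Omega V$ (directly from $\langle\Omega v,v\rangle=0$), so if $v$ is an eigenvector of $a$ (which exists as $K$ is algebraically closed), then $V$ is $a$-invariant and $\Omega a\Omega^{-1}\cdot V^\perp=\Omega aV\subseteq\Omega V=V^\perp$, placing $(a,\Omega a\Omega^{-1})$ in $X_1$.

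The inclusion $B_{[\Omega]}\subseteq\overline{Y}$ is the delicate point and the main obstacle. The naive attempt of approximating $[\Omega]$ by symmetric classes in $\PGL$ cannot succeed when $\Char K\neq2$, since the symmetric classes form a closed subset of $\PGL$ not containing $[\Omega]$. In characteristic $2$ this difficulty disappears and the inclusion follows at once from part (i). For $\Char K\neq2$ the key observation is that the permutation matrix $P=\left(\begin{smallmatrix}0&1\\1&0\end{smallmatrix}\right)$ is symmetric and non-alternating, and conjugation by $P$ swaps the diagonal entries: $P\,\mathrm{diag}(\lambda,\mu)\,P^{-1}=\mathrm{diag}(\mu,\lambda)=\Omega\,\mathrm{diag}(\lambda,\mu)\,\Omega^{-1}$. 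Hence $(d,\Omega d\Omega^{-1})\in Y$ for every diagonal $d$. Since $Y$ is $G$-stable (it is a union of $G$-orbits by Lemma~\ref{LM:properties-of-X-Y}) and $G=\PGL$ acts on $B_{[\Omega]}$ through conjugation on the first coordinate, it follows that $(a,\Omega a\Omega^{-1})\in Y$ for every diagonalizable $a$.

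Finally, the matrices in $\M$ with distinct eigenvalues are dense, and $a\mapsto(a,\Omega a\Omega^{-1})$ is an isomorphism onto $B_{[\Omega]}$, so the points $(a,\Omega a\Omega^{-1})$ with $a$ diagonalizable are dense in $B_{[\Omega]}$; their closure is all of $B_{[\Omega]}$, giving $B_{[\Omega]}\subseteq\overline{Y}$. Combining this with the inclusion into $X_1$ yields $\overline{Y'}\subseteq X_1\cap\overline{Y}$.
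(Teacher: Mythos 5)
Your proof is correct, and in outline it follows the same decomposition as the paper's: you prove (i) by deforming an alternating matrix within the symmetric matrices (your $p_t=p+tI$ makes explicit what the paper dismisses as ``readily implied''), and you prove (ii) by reducing to $\overline{Y'}=Y'=B_{[\Omega]}$, establishing the $X_1$-containment by the same eigenvector argument (your $\Omega v$ is exactly the paper's $v'$), and establishing the $\overline{Y}$-containment by exhibiting a dense subset of $B_{[\Omega]}$ inside $Y$. The genuine divergence is in that last step. For $x=(M,\Omega M\Omega^{-1})\in B_{[\Omega]}$, the paper writes down a per-point symmetric conjugator: on the dense set where $u:=M-\frac{1}{2}\tr(M)\,I$ is invertible, the matrix $\Omega u$ is symmetric and conjugates $M$ to $\Omega M\Omega^{-1}$ (because $u$ commutes with $M$), so $x\in Y$. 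You instead produce a conjugator only for diagonal $M$ (the permutation matrix $P$) and propagate to all diagonalizable $M$ via the $G$-stability of $Y$ and the claim that $\PGL$ acts on $B_{[\Omega]}$ by conjugation in the first coordinate. That claim is true for $n=2$ but is the one point you assert without justification: it requires either the $2\times 2$ identity $\Omega^{-1}c^{\trans}\Omega=\det(c)\,c^{-1}$, or the observation that $[c]B_{[\Omega]}=B_{[\Omega]}$ for all $[c]\in\PGL$ (noted in the paper's proof of Proposition~\ref{PR:subalgebra-generators}, via Lemma~\ref{LM:action}) combined with the facts that an element of $B_{[\Omega]}$ is determined by its first coordinate and that $\pi_1([c]x)=c\,\pi_1(x)\,c^{-1}$. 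With that sentence supplied, your argument is complete. The trade-off: the paper's construction is entirely self-contained and verifiable in one line, while yours is more conceptual --- conjugators for diagonal matrices are obvious --- at the cost of invoking equivariance; your handling of the characteristic-$2$ subtlety (where $P$ itself becomes alternating, so the trick fails and one must fall back on part (i)) is correctly identified and matches the paper's case split.
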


\begin{proof}
	(i) When $\Char K=2$, the space   of  alternating $n\times n$ matrices
	is strictly contained in the space of  symmetric matrices. This implies
	readily that $Y'\subseteq \overline{Y}$, so $\overline{Y'}\subseteq\overline{Y}$.
	
	(ii) Suppose $(n,r)=(2,1)$. We first prove that $\overline{Y'}\subseteq X_1$.
	As noted in the last paragraph of the proof of Lemma~\ref{LM:properties-of-X-Y},
	we   have $Y'=\overline{Y'}=B_{[\Omega]}$. 
	Let $
	x=([\begin{smallmatrix} a & b \\ c & d \end{smallmatrix}],
	[\begin{smallmatrix} d & -c \\ -b & a \end{smallmatrix}])\in B_{[\Omega]}$.
	Since $K$ is algebraically closed, $[\begin{smallmatrix} a & b \\ c & d \end{smallmatrix}]$
	has an eigenvector $v:=[\begin{smallmatrix} s \\ t \end{smallmatrix}]\in K^2$.
	One readily checks that $v^\perp$ is spanned by
	$v':=[\begin{smallmatrix} -t \\ s \end{smallmatrix}]$
	and $v'$ is an eigenvector of $[\begin{smallmatrix} d & -c \\ -b & a \end{smallmatrix}]$.
	It follows that $x\in S_{Kv}\times S_{(Kv)^{\perp}}\subseteq X_1$.
	
	We turn to prove that $\overline{Y'}\subseteq \overline{Y}$. We already showed it
	when $\Char K=2$, so assume $\Char K\neq 2$.
	It is enough to prove that a dense subset of $Y'=B_{\Omega}$
	is contained in $Y$.
	Again, let $ 
	x=([\begin{smallmatrix} a & b \\ c & d \end{smallmatrix}],
	[\begin{smallmatrix} d & -c \\ -b & a \end{smallmatrix}])\in  B_{[\Omega]}$
	and, given $\gamma\in K$, 
	put $u_\gamma = [\begin{smallmatrix} a & b \\ c & d \end{smallmatrix}]+\gamma I $.
	Since $
	\Omega 	
	[\begin{smallmatrix} a & b \\ c & d \end{smallmatrix}]
	\Omega^{-1}=
	[\begin{smallmatrix} d & -c \\ -b & a \end{smallmatrix}]$,
	we also have $(\Omega u_\gamma) 	
	[\begin{smallmatrix} a & b \\ c & d \end{smallmatrix}]
	(\Omega u_\gamma)^{-1}=
	[\begin{smallmatrix} d & -c \\ -b & a \end{smallmatrix}]$, provided $u_\gamma\in \GL_2$.
	The set of $x\in B_{[\Omega]}$
	for which $u_{-\frac{a+d}{2}}\in \GL_2$ is dense in $B_{[\Omega]}$. Since $\Omega u_{-\frac{a+d}{2}}$ is symmetric,
	all those  $x$-s satisfy $x\in Y$. We conclude that $Y$ contains a dense subset of $Y'$.
\end{proof}

We can now conclude the proof of Theorem~\ref{TH:irred-comps}.

\begin{proof}[Proof of Theorem~\ref{TH:irred-comps}]
	Suppose $n$ is odd. By Lemma~\ref{LM:maximal-almost},
	the irreducible components of $Z_r$ are the maximal members of 
	$X_1,\dots,X_{n-1}$, $\overline{Y}$, and none of these sets is contained
	in any of the others by  Lemma~\ref{LM:not-contain-easy}(i).
	
	Suppose $n$ is even. By Lemma~\ref{LM:maximal-almost},
	the irreducible components of $Z_r$ are the maximal members of 
	$X_1,\dots,X_{n-1}$, $\overline{Y}$, $\overline{Y'}$.
	If $\Char K=2$ or $(n,r)=(2,1)$, then $\overline{Y'}\subseteq \overline{Y}$
	(Lemma~\ref{LM:do-contain})
	and we are done by Lemma~\ref{LM:not-contain-easy}(i).
	If $\Char K\neq 2$ and $(n,r)\neq (2,1)$, then
	Lemma~\ref{LM:not-contain-easy}(i)--(iv) tells
	us that $X_1,\dots,X_{n-1}$, $\overline{Y}$, $\overline{Y'}$ are all maximal.
\end{proof}

\bibliographystyle{plain}
\bibliography{MyBib_18_05}

\end{document}